\newcommand{\R}{\mathbb{R}}
\renewcommand{\SS}{\mathbb{S}}
\newcommand{\Z}{\mathbb{Z}}
\DeclareMathOperator{\vectize}{\operatorname{vec}}
\DeclareMathOperator{\svectize}{\operatorname{svec}}
\DeclareMathOperator{\rank}{\operatorname{rank}}
\DeclareMathOperator{\inte}{\operatorname{int}}
\DeclareMathOperator{\diag}{\operatorname{diag}}
\DeclareMathOperator{\Diag}{\operatorname{Diag}}
\DeclareMathOperator{\svec}{\operatorname{svec}}
\DeclareMathOperator{\Null}{\operatorname{Null}}
\DeclareMathOperator{\Range}{\operatorname{Range}}
\DeclareMathOperator{\Pc}{\operatorname{P}}
\DeclareMathOperator{\Pct}{\operatorname{\overset{\sim}{P}}}
\DeclareMathOperator{\Pch}{\operatorname{ \widehat{P} }}
\definecolor{mypink3}{cmyk}{0, 0.7808, 0.4429, 0.1412}
\newcommand{\cE}{\mathcal{E}}
\newcommand{\cG}{\mathcal{G}}
\newcommand{\cK}{\mathcal{K}}
\newcommand{\cV}{\mathcal{V}}
\newcommand{\newgraph}{\mathcal{G}}
\newcommand{\newvertix}{\mathcal{V}}
\newcommand{\newedge}{\mathcal{E}}
\newcommand{\norm}[1]{\left\lVert#1\right\rVert}
\newcommand{\dv}[0]{\mathbf{d}}
\newcommand{\ev}[0]{\mathbf{e}}
\newcommand{\uv}[0]{\mathbf{u}}
\newcommand{\vv}[0]{\mathbf{v}}
\newcommand{\xv}[0]{\mathbf{x}}
\newcommand{\bv}[0]{\mathbf{b}}
\newcommand{\wv}[0]{\mathbf{w}}
\newcommand{\zv}[0]{\mathbf{z}}
\newcommand{\xiv}[0]{{\bm{\xi}}}
\renewcommand{\choose}[2]{\binom{#1}{#2}}
\newtheorem{thm}{Theorem}[section]
\newtheoremstyle{mytheoremstyle}{0pt}{0pt}{\itshape}{}{\bfseries}{.}{.5em}{} 
\theoremstyle{plain}
\newtheorem{cor}[thm]{Corollary}
\newtheorem{lem}[thm]{Lemma}
\newtheorem{dfn}[thm]{Definition}
\newtheorem{example}[thm]{Example}
\newtheorem{rem}[thm]{Remark} 
\newcommand{\adj}{\mathrm{adj}}
\newcounter{algorithmctr}[section]
\renewcommand{\thealgorithmctr}{\thesection.\arabic{algorithmctr}}
    {\refstepcounter{algorithmctr}\begin{list}{}{%
        \setlength{\rightmargin}{.05\linewidth}%
        \setlength{\leftmargin}{.05\linewidth}}%
        \item[]{\setlength{\parskip}{0ex}\bigskip\par%
         \nopagebreak%
         \underline{{\bf Algorithm \thealgorithmctr.} \emph{#1.}}}}%
    {{\setlength{\parskip}{-1ex}\nopagebreak\smallskip\par} \end{list}}
\newsavebox\myboxA
\newsavebox\myboxB
\newlength\mylenA
\newcommand\xoverline[1]{\overline{#1}}
\renewcommand\textcolor[2]{#2}
\begin{document}

\title{Computational complexity of decomposing\\ a symmetric matrix\\ as a sum of positive semidefinite and diagonal matrices}

\author{Levent Tun\c{c}el \and Stephen A. Vavasis \and Jingye Xu}
\thanks{Levent Tun\c{c}el: Department of Combinatorics and Optimization, Faculty
of Mathematics, University of Waterloo, Waterloo, Ontario N2L 3G1,
Canada (e-mail: levent.tuncel@uwaterloo.ca).
Research of this author
was supported in part by
Discovery Grants from the Natural Sciences and Engineering Research Council (NSERC) of Canada, and by U.S. Office of Naval Research under award number N00014-18-1-2078.\\
Stephen A. Vavasis: Department of Combinatorics and Optimization, Faculty
of Mathematics, University of Waterloo, Waterloo, Ontario N2L 3G1,
Canada (e-mail: vavasis@uwaterloo.ca).
Research of this author
was supported in part by a Discovery Grant from the Natural Sciences and Engineering Research Council (NSERC) of Canada.\\
Jingye Xu: 
School of
Industrial and Systems Engineering, Georgia Institute of Technology. Work of this author was completed during 2021-2022 while in the undergraduate program at David R. Cheriton School of Computer Science and the Department of Statistics and Actuarial Science, Faculty of Mathematics, University of Waterloo, Waterloo, Ontario N2L 3G1,
Canada (e-mail: jxu673@gatech.edu).
Research of this author
was supported in part by a Discovery Grant from the  Natural Sciences and Engineering Research Council (NSERC) of Canada.
}

\date{\textcolor{blue}{September 13, 2022, revised: \today}}

%

\newcommand{\LRPD}{\textbf{LRPD}}

\begin{abstract}
We study several variants of decomposing a symmetric matrix into a sum of a low-rank positive semidefinite matrix and a diagonal matrix. Such decompositions have applications in factor analysis and they have been studied for many decades. On the one hand, we prove that when the rank of the positive semidefinite matrix in the decomposition is bounded above by an absolute constant, the problem can be solved in polynomial time. On the other hand, we prove that, in general, these problems as well as their certain approximation versions are all NP-hard. Finally, we prove that many of these low-rank decomposition problems are complete in the first-order theory of the reals; i.e., given any system of polynomial equations, we can write down a low-rank decomposition problem in polynomial time so that the original system has a solution iff our corresponding decomposition problem has a feasible solution of certain (lowest) rank. 

\end{abstract}

\maketitle

{\bf AMS Subject Classification:} 65F55 (Numerical methods for low rank matrix approximation; matrix compression); 68Q17 (Computational difficulty of problems); 62H25 (Factor analysis and 
principal components; correspondence analysis)

\begin{section}{Introduction}
\label{sec:intro}
When a matrix is representable as a sum of a low-rank matrix and a diagonal matrix, having access to such a decomposition provides many desirable opportunities in many contexts. 
These contexts include applications in big data, machine learning, statistical analysis, numerical analysis and in the design and analysis of efficient algorithms for large-scale optimization problems. In this paper, we are interested in the computational complexity of decomposing a given symmetric matrix as a sum of a diagonal matrix and a positive semidefinite
matrix, where the rank of the positive semidefinite matrix is as small as possible.

A very useful version of this optimization (low-rank decomposition) problem has been considered for at least 100 years in the area of \emph{factor analysis} in statistics and data science (see, for instance, Albert~\cite{Albert1944}). For some more recent sources, see Bertsimas et al.~\cite{bertsimas2017certifiably}, Della Riccia and Shapiro~\cite{cite-key}, Shapiro~\cite{Shapiro2019} and the references therein. This first version of the decomposition problem, which we call $(\Pc_1)$, expresses a given covariance matrix $A$ as a sum of a low-rank positive semidefinite matrix (determining the \emph{factors}) and a diagonal matrix (determining \emph{perturbation} or \emph{noise}):


\begin{mini*}|l|
{\dv \in \R^n} { \rank(A - \Diag(\dv))  }{}{}\tag{$\Pc_1$}
\addConstraint{ A - \Diag(\dv) \succeq 0}{}{}
\addConstraint{ \dv \geq \mathbf{0},}{}{}
\end{mini*}
where $A \in \SS^n_+$ is given.


In the above and the rest of the paper, $\SS^n$ denotes the space of $n$-by-$n$ symmetric matrices with real entries, $\SS^n_+$ denotes the positive semidefinite matrices in $\SS^n$. We use the trace inner product on $\SS^n$ and the usual Euclidean inner product on $\R^n$, all vectors are column vectors. $\Diag: \R^n \to \SS^n$ is the linear map $\xv \mapsto \begin{bmatrix} x_1 & 0 & \ldots & 0\\ 0 & x_2 & \ldots & 0\\ \vdots & \vdots & \ddots & \vdots \\ 0 & 0 & \ldots & x_n \end{bmatrix};$ its adjoint $\diag : \SS^n \to \R^n$ is the linear map $X \mapsto \begin{bmatrix} X_{11} \\X_{22}\\ \vdots\\X_{nn} \end{bmatrix}$. \textcolor{blue}{For a pair of matrices $A,B\in\SS^n$, we write $A \succeq B$  to mean $A-B \in \SS^n_{+}$ and} \textcolor{blue}{$A \succ B$  to mean $A-B \in \inte(\SS^n_+) =: \SS^n_{++}$.  For general matrices $A,B$, $A \otimes B$ denotes their Kronecker product $$\begin{bmatrix}
    A_{11} B &  \dots & A_{1n} B \\
    \vdots & \ddots & \vdots \\
    A_{m1} B &  \dots & A_{mn} B 
\end{bmatrix},$$ where $m,n$ are the dimensions of $A$.}


A related optimization problem considered by Saunderson et al.~\cite{Saunderson_2012} removes the non-negativity constraint on the diagonal matrix of the decomposition:

\begin{mini*}|l|
{\dv \in \R^n} { \rank(A + \Diag(\dv))  }{}{}\tag{$\Pc_2$}
\addConstraint{ A + \Diag(\dv) \succeq 0,}{}{}
\end{mini*}
where $A \in \SS^n$ is given and $\diag(A) = \mathbf{0}$ (since there are no other constraints on $\dv$, we may assume $\diag(A) = \mathbf{0}$ for convenience).

Both $(\Pc_1)$ and $(\Pc_2)$ can be regarded as low-rank matrix completion problems with unspecified diagonal entries and some additional constraints.
To be able to treat a more general pattern of unspecified entries, we introduce more notation. We denote by $\cG := (\cV,\cE)$ a finite, undirected, simple, \textcolor{blue}{loopless} graph with vertex set $\cV$ and edge set $\cE$. \textcolor{blue}{In this paper, all graphs are finite, undirected, loopless and simple.}
Given such a graph $\cG$, we also use the notation $\cV(\cG)$ and $\cE(\cG)$ to denote the vertices and the edges of $\cG$ respectively. When the graph is clear from the context, we simply use $\cV$ and $\cE$. $\cK_n$ denotes the clique on $n$ vertices. We also use $\cK_{\cV}$ or $\cK(\cV)$ to denote the clique with vertex set $\cV$.  For example, $\cE(\cK(\{a,b,c\}))$ denotes the set $\{\{a,b\},\{a,c\},\{b,c\}\}$.

We say $L \in \SS^n$ \emph{fits}
an edge set $\mathcal{X}\subseteq \cE(\cK(\{1,\ldots,n\}))$
if $L_{ij} = 0$ for all $\{i,j\} \in \mathcal{X}$. With this new notation, we define another optimization problem:
\begin{mini*}|l|
{L \in \SS^n} { \rank(A + L)  }{}{}\tag{$\Pc_3$}
\addConstraint{ A + L \succeq 0}{}{}
\addConstraint{ L \text{ fits } \mathcal{X},}{}{}
\end{mini*}
where $A \in \SS^n$ is given.  Note that in $(\Pc_3)$, similar to $(\Pc_2)$, the diagonal entries are always unspecified.  Therefore, without loss of generality, we assume $\diag(A)=\mathbf{0}$ and $A_{ij} = 0$ for all $\{i,j\} \in \cE(\cK(\{1,\ldots,n\}))\setminus \mathcal{X}$.



In all of the three optimization problems above, our objective function is always to minimize the rank of a symmetric positive semidefinite matrix variable. The underlying matrix variable is of rank $r$ iff there exist vectors $\vv_1, \vv_2, \ldots, \vv_n \in \R^r$ such that their span is $\R^r$ and the $ij^{\textup{th}}$ entry of the matrix is $\vv_i^{\top}\vv_j$. Thus, we can equivalently think about the positive semidefinite matrix variable providing a lowest-dimensional ($r$-dimensional) geometric representation given by the vectors $\vv_1, \vv_2, \ldots, \vv_n \in \R^r$. Therefore, there is a connection between the optimization problems we consider in this paper and geometric representations of graphs where the dimension of the representation space is minimized, see Laurent~\cite{Laurent1997}, Lov\'{a}sz~\cite{Lovasz2019} and the references therein. \textcolor{blue}{In such problems, either the diagonal entries of the positive semidefinite matrix variable is fixed or constrained, e.g., by conditions such as $X_{ii}=1$ or $X_{ii}+X_{jj}-2X_{ij}= 1 = \|\vv_i-\vv_j\|_2^2$. Therefore,} a key difference is that in the geometric representations of graphs, the typical constraints either fix the diagonal of the positive semidefinite matrix variable (\emph{orthonormal representations of graphs}), or constrain the diagonal by fixing or constraining the Euclidean distances between pairs of points in $\R^r$ representing the edges in a given graph (\emph{Euclidean distance matrix completion} or \emph{Euclidean graph realization}), or both (\emph{unit distance representations} contained in a hypersphere, a ball, or an ellipsoid). 

On the other hand,
our problem can also be viewed as a low-rank matrix completion problem. The minimization of nuclear norm has been used as a heuristic for such problems \cite{nuclearnorm}. Minimization of nuclear norm subject to linear equations and inequalities can be formulated as a Semidefinite Programming (SDP) problem. The recovery of an exact solution to a low-rank matrix completion problem by solving the SDP relaxation is guaranteed on average if the given matrix satisfies certain properties \cite{rip}. Recently, utilizing the SDP relaxation, Saunderson et al.~\cite{Saunderson_2012} established links between ($\Pc_2$) and two geometric problems: \emph{subspace realization} and \emph{ellipsoid fitting}, and provided a simple sufficient condition based on \emph{coherence of a subspace} for recovery of low-rank solutions. Beyond SDP relaxations, some other heuristics have also been proposed in the literature for such problems. In \cite{muzhu}, the authors interpret such a problem from the statistical perspective and suggest a blockwise coordinate descent algorithm. Their numerical experiments show that such heuristics perform well in terms of Kullback–Leibler loss. In \cite{adaptive}, the authors place such problems into the framework of  Riemannian optimization and
propose a so-called Riemannian rank-adaptive method, which involves fixed-rank optimization, rank increase step and rank reduction step.  In \cite{Jellyfish}, the authors propose a stochastic gradient algorithm which implements a projected incremental gradient
method with a biased, random ordering of the increments.  \textcolor{blue}{Bertsimas et al.\ \cite{bertsimas2017certifiably} consider $(\Pc_1)$, obtaining provable bounds that can be used in a branch-and-bound algorithm.}

\textcolor{blue}{There are also some other areas connected to the three optimization problems above. In theoretical computer science, \textit{matrix rigidity} is a measurement of a matrix's Hamming distance from any low rank matrix \cite{ramya2020recent}. More specifically, the rigidity of a matrix $A \in \SS^n$ for rank $r$ is minimum number of entry-wise changes to $A$ so that its rank becomes $r$. Denote this minimum number by $R_r(A)$. The decision version of $(\Pc_2)$ is that whether $A$ can be made rank $r$ by changing at most $n$ diagonal entries. In this case, the decision version of $(\Pc_2)$ can be viewed as a restricted rigidity problem. If $A$ has objective value $\Bar{r}$ in $(\Pc_2)$, then $R_{\Bar{r}}(A) \leq n$. Another related  research area is \textit{algebraic factor analysis} from algebraic statistics community \cite{drton2007algebraic}. Algebraic factor analysis considers algebraic structure of the family of multivariate normal distributions whose covariance matrix is a sum of a low-rank positive semidefinite matrix and a diagonal matrix. While the viewpoint of \cite{drton2007algebraic} is from the area of computational algebraic geometry, our focus is  on optimization and computational complexity in decomposing those covariance matrices.} 

There are various results establishing the hardness of computing low dimensional geometric representations of graphs, such as Saxe~\cite{Saxe1980} and Peeters~\cite{PeetersM.J.P1996ORoF}.
For the hardness of low-rank matrix completion, see Shitov~\cite{shitov2016hard} or Bertsimas et al.\ \cite{bertsimas2021mixed}. 
However, it seems that the computational complexity of none of the low-rank optimization problems ($\Pc_1$), ($\Pc_2$), ($\Pc_3$) is directly covered by existing results.
Our results on completeness for the first-order theory of the reals in Section~\ref{sec:existsR} are most closely related to Shitov's and use some of his techniques.





\textcolor{blue}{The essential technical tools used herein are:
\begin{itemize}
\item 
In Section 2, we focus on the inverse of a particular $r\times r$ principal submatrix of the completion (matrix $V$ in \eqref{char_sys}), which leads to a system of $O(r^2)$ polynomial equations in $O(r^2)$ variables with maximum degree at most $r$.  This yields a polynomial-time algorithm to solve ($\Pc_1$) or ($\Pc_2$) by solving the underlying polynomial systems, since these polynomial systems become tractable (albeit with constants terms in the complexity bounds growing exponentially with $r$) when the optimal value $r$  of ($\Pc_1$) or ($\Pc_2$) is $O(1)$.
\item 
In Section 3, we use a suitable modification of a construction by Peeters \cite{PeetersM.J.P1996ORoF}, which reduces graph 3-colorability to a modified version of $(\Pc_3)$.  We use
 Schur complementation that allows indeterminate entries on the diagonal to induce indeterminate entries in off-diagonal positions, thus reducing $(\Pc_3)$ to  $(\Pc_1)$ and $(\Pc_2)$.  See, e.g., \eqref{eq:bdef} below, in which a diagonal perturbation of the $(1,1)$ block induces off-diagonal perturbations to $A$.  A second technique is replacing matrix entries by $3\times 3$ blocks in our reductions (see, e.g., \eqref{eq:3by3}) to enforce constraints on diagonal entries.
 \item In Section 4, our argument is based on a construction of 
Shitov \cite{shitov2016hard} showing that a certain matrix completion problem is 
complete in the first-order theory of the reals.  We augment this argument again with Schur complementation and $3\times 3$ blocks, e.g., Lemma~\ref{lem:uniquecompletion3by3} below.
\end{itemize}}

Next, we define a version of $(\Pc_1)$ allowing approximate solutions so that inaccuracies and/or uncertainties in data $A$ and inaccuracies arising from finite precision computations can be addressed. 

\underline{Problem $(\Pct_1)$:}  Fix a polynomial function $p(n)$. Given $A \in \SS^n$, $r \in \Z_+$, $\epsilon \in [0,1]$ such that there exists $\dv_0 \in \R^n$ and $H_0 \in \SS^n$ satisfying $\norm{H_0}_F \leq \epsilon$, $\dv_0 \geq \mathbf{0}$, $A - \Diag(\dv_0) + H_0 \succeq 0$, and $\rank(A - \Diag(\dv_0) + H_0) \leq r$, find $\dv \in \R^{n}_+$ and $H \in \SS^n$ such that $\norm{H}_F \leq p(n)\cdot \epsilon$, $A - \Diag(\dv) + H \succeq 0$  and  $\rank(A - \Diag(\dv) + H) \leq r.$

Problem $(\Pct_1)$ can be equivalently stated in terms of an optimization problem. Given $A \in \SS^n$ and a positive integer $r$, consider:
\begin{mini*}|l|
{R \in \R^{n \times r}, \dv \in \R^n} {\norm{A -R R^{\top} - \Diag(\dv)}_F}{}{}
\addConstraint{ \dv \geq \mathbf{0}.}{}{}
\end{mini*}
Then, problem $(\Pct_1)$ asks for a feasible solution $(\bar{R}, \bar{\dv})$ of the above optimization problem such that the objective function value of $(\bar{R}, \bar{\dv})$ is within a multiplicative factor $p(n)$ of the optimal objective value, for a polynomial $p$.

\textcolor{blue}{We allow a multiplicative factor of $p(n)$ as the approximation factor partly to ensure that our NP-hardness result is not due solely to the wrong choice of norm.  To give an example of the significance of the norm, consider the following problem.  Given $A\in\R^{n\times n}$, find
\[
\min\{\Vert A-A'\Vert_{\max}:A'\mbox{ is singular}\}.
\]
Here, $\Vert\cdot\Vert_{\max}$ denotes the maximum absolute entry of the matrix.  This problem is known to be NP-hard (see \cite{PoljakRohn1993} and the related results in \cite{Nemirovskii1993}).  On the other hand, the problem would be easily solved if it had been stated in terms of the operator-2-norm or Frobenius norm, since in this case the nearest singular matrix is found using the singular value decomposition.  By allowing the multiplicative factor of $p(n)$ in $(\Pct_1)$, we insulate the result against the possibility that the problem is polynomial-time solvable in some other norm.}

Moreover, we define ($\Pch_1$) which is similar to ($\Pct_1$) with an additional constraint on the sparsity pattern of $H$: $H_{ij} = 0$ if $A_{ij} = 0,\forall i \neq j$, and similarly for $H_0$. \textcolor{blue}{The rationale behind this format of ($\Pch_1$) is as follows. In many real life applications, some zero entries of the input data usually play important roles as prior information and they might be unchangeable. For example, when $A$ is an empirical covariance matrix, some zero entries of $A$ may refer to pairs of random variables which are not (linearly) correlated (therefore, these entries may not be altered).  Note that if $A$ is dense, then $(\Pch_1)$ and $(\Pct_1)$ are identical.}


Likewise, we define ($\Pct_2$), ($\Pct_3$), ($\Pch_2$), ($\Pch_3$) in a similar way to the above.


Our results on the computational complexity of these low-rank decomposition problems will consider
rational data and the Turing machine model as well as real number data and the Blum-Shub-Smale real number machine model \cite{Blum1989OnAT}.
In this paper, we will show following results in order:

\begin{enumerate}[(i)]
    \item \textcolor{blue}{In Section 2,
    Theorem~\ref{thm_given}  shows when the optimal value of ($\Pc_2$) is $O(1)$, ($\Pc_2$) can be solved in polynomial time. The same statement holds for ($\Pc_1$) (Theorem~\ref{thm_given_2}).}
    \item \textcolor{blue}{Theorem~\ref{thm:tilde_p3_is_nphard} shows that $(\Pc_3)$, $(\Pct_3)$, and $(\Pch_3)$ are NP-hard. Theorem~\ref{thm_P1_tilda} shows that  $(\Pc_1)$, $(\Pct_1)$, and $(\Pch_1)$ are NP-hard.  Theorem~\ref{thm_P2} shows that $(\Pc_2)$ and $(\Pch_2)$ are NP-hard, while Theorem~\ref{thm:p2til_nphard} in the Appendix shows the result for $(\Pct_2)$.}
      \item \textcolor{blue}{Theorem~\ref{thm:P3redP2} reduces $(\Pc_3)$ to $(\Pc_2)$, and Theorem~\ref{thm:red_poly_p3} reduces solution of a system of polynomial equations to $(\Pc_3)$.  Thus, the results of Section 4 show both $(\Pc_2)$ and $(\Pc_3)$ are complete for the first-order theory of the reals.}
\end{enumerate}

\end{section}

 
  



\begin{section}{Algorithms}
\label{sec:alg}

In this section, we prove that ($\Pc_1$) and ($\Pc_2$) can be solved in polynomial time if their optimal objective value is bounded above by an
absolute constant. 
 We begin with representing the set of optimal solutions to ($\Pc_2$) as the solution set of a polynomial system of equations. Then, we propose an algorithm to solve ($\Pc_2$) which runs in polynomial time if the optimal value is $r = O(1)$. Such an algorithm can be extended to handle ($\Pc_1$) as well. In a later section, we will show that ($\Pc_3$) can be reduced to ($\Pc_2$) in polynomial time. However, unlike ($\Pc_1$) and ($\Pc_2$), $(\Pc_3$) may be hard even when the optimal objective value (rank) is $O(1)$. Indeed, our reduction from ($\Pc_3$) to ($\Pc_2$) increases the optimal objective value (rank) substantially. We continue with some well-known facts about Schur complements in relation to positive semidefiniteness and rank. \textcolor{blue}{For similar results and their proofs, see, for instance \cite{zhang2006schur}.}

\begin{lem} 
Assume $A \in \SS^n$, $W \in \R^{k \times n}$ and for some $B\in \SS^k$,
the block matrix
$$M := \begin{bmatrix}
   A & W^{\top} \\ 
    W & B
\end{bmatrix}$$
is positive semidefinite. Then, there exist $V_1, V_2 \in \R^{k \times n}$ such that $W = V_1 A = BV_2$ .
\label{lem:rangeA1}
\end{lem}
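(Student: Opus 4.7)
The plan is to establish the range containments $\Range(W^\top)\subseteq \Range(A)$ and $\Range(W)\subseteq \Range(B)$; once these are in hand, the existence of $V_1$ and $V_2$ follows immediately by taking a matrix of coefficients that writes each column of $W^\top$ (respectively $W$) as $A$ (respectively $B$) times some vector.

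First I would prove the inclusion of null spaces $\Null(A)\subseteq \Null(W)$.  Fix $x\in\Null(A)$.  For any $y\in\R^k$ and any $t\in\R$, PSDness of $M$ gives
\[
0 \leq \begin{bmatrix} tx \\ y \end{bmatrix}^\top M \begin{bmatrix} tx \\ y \end{bmatrix} = t^2 x^\top A x + 2t\, y^\top W x + y^\top B y = 2t\, y^\top W x + y^\top B y,
\]
since $Ax=0$.  If $y^\top W x$ were nonzero for some $y$, one could drive the right-hand side negative by choosing $t$ of suitable sign and large magnitude.  Hence $y^\top W x = 0$ for every $y$, so $Wx=0$, establishing $\Null(A)\subseteq \Null(W)$.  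Taking orthogonal complements and using that $A$ is symmetric, we get $\Range(W^\top)=\Null(W)^\perp\supseteq \Null(A)^\perp=\Range(A)$ in the reverse direction; more precisely, $\Range(W^\top)\subseteq \Null(A)^\perp=\Range(A)$.

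Consequently each column of $W^\top$ lies in $\Range(A)$, so there exists a matrix $X\in\R^{n\times k}$ with $W^\top=AX$.  Taking transposes and using $A^\top=A$, this reads $W=X^\top A$, and setting $V_1:=X^\top\in\R^{k\times n}$ gives the first equation.  For the second equation I would apply exactly the same argument to the permuted block matrix
\[
\begin{bmatrix} B & W \\ W^\top & A \end{bmatrix},
\]
which is PSD since it is obtained from $M$ by a symmetric permutation of rows and columns.  The previous paragraph, applied with the roles of $A$ and $B$ interchanged, yields $W=BV_2$ for some $V_2\in\R^{k\times n}$.

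There is no real obstacle here beyond the standard PSD-implies-range-containment fact; the only small point worth being careful about is the passage from $\Null(A)\subseteq\Null(W)$ to $\Range(W^\top)\subseteq\Range(A)$, which relies on $A$ being symmetric so that $\Range(A)=\Null(A)^\perp$.
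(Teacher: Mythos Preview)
The paper does not supply its own proof of this lemma; it is stated as a well-known fact with a pointer to \cite{zhang2006schur}, so there is nothing to compare against.  Your argument is correct and is the standard one: from $M\succeq 0$ deduce $\Null(A)\subseteq \Null(W)$ via the quadratic form on $[tx;y]$, pass to orthogonal complements to get $\Range(W^\top)\subseteq\Range(A)$, and then read off $V_1$; the second equality follows by the symmetric block permutation.

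One small wording slip: from $\Null(A)\subseteq\Null(W)$, orthogonal complements give $\Null(W)^\perp\subseteq\Null(A)^\perp$, not the reverse containment you first wrote.  You immediately correct this in the next clause, so the final conclusion $\Range(W^\top)\subseteq\Range(A)$ is right, but you should delete the intermediate ``$\supseteq$'' sentence to avoid confusing the reader.
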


\begin{lem}
\label{lemma_schur}
\begin{enumerate} 
\item 
(Symmetric positive semidefinite case.)
For $A \in \SS_+^n$ and $W \in \R^{k \times n}$, suppose the columns of $W^{\top}$ lie in $\Range(A)$, and in particular, say $W=VA$ for some $V \in \R^{k \times n}$. Then
$$M := \begin{bmatrix}
   A & W^{\top} \\ 
    W & B
\end{bmatrix} \succeq 0$$ if and only if $B \succeq V A V^{\top}$.
Moreover, $$\rank \left(M\right) = \rank\left(A\right) + \rank(B - V A V^{\top}).$$  In the special case that $A$ is nonsingular, this formula simplifies to $$\rank(M) = n + \rank(B - V A V^{\top}) = n+\rank(B-W A^{-1}W^{\top}) .$$

\item (Unsymmetric case.)
Under the assumption that $A\in\R^{n\times n}$ is nonsingular in the block matrix
$$M := \left(\begin{array}{cc}
A & B \\
C & D
\end{array}\right),
$$
where $B,C,D$ have conforming sizes, $\rank(M)=n+\rank(D-CA^{-1}B)$.
\end{enumerate}
\end{lem}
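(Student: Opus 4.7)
The plan is to prove both parts by producing explicit block-matrix factorizations that reduce $M$ to block-diagonal form, from which the rank (and, in the symmetric case, the inertia) can be read off directly. These are the classical ``Schur complement'' factorizations; the proof amounts to verifying them and invoking rank-additivity on block-diagonal matrices.

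For Part~1, I would introduce $L := \begin{bmatrix} I & 0 \\ -V & I \end{bmatrix}$ and check directly that
\[
L M L^{\top} \;=\; \begin{bmatrix} A & 0 \\ 0 & B - V A V^{\top} \end{bmatrix}.
\]
The $(2,1)$ block of $LM$ is $W - VA = 0$ by the hypothesis $W = VA$, and the $(1,2)$ block of $LML^{\top}$ is $W^{\top} - AV^{\top} = 0$ because $W^{\top} = A^{\top}V^{\top} = AV^{\top}$ by symmetry of $A$. Since $L$ is unit lower triangular (hence invertible) and its transpose is the corresponding right factor, this is a congruence transformation, preserving both rank and inertia. Therefore $M \succeq 0$ iff $A \succeq 0$ and $B - V A V^{\top} \succeq 0$; since $A \in \SS_+^n$ by hypothesis, this reduces to $B \succeq V A V^{\top}$. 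Rank-additivity on block-diagonal matrices then gives $\rank(M) = \rank(A) + \rank(B - V A V^{\top})$. In the nonsingular case, taking $V = W A^{-1}$ yields $V A V^{\top} = W A^{-1} W^{\top}$, producing the stated simplified formula.

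For Part~2, I would use the analogous (non-symmetric) block factorization
\[
M \;=\; \begin{bmatrix} I & 0 \\ C A^{-1} & I \end{bmatrix} \begin{bmatrix} A & 0 \\ 0 & D - C A^{-1} B \end{bmatrix} \begin{bmatrix} I & A^{-1} B \\ 0 & I \end{bmatrix}.
\]
Both outer factors are unit triangular and therefore invertible, so $\rank(M)$ equals the rank of the middle block-diagonal matrix, which is $n + \rank(D - C A^{-1} B)$ since $A$ is nonsingular of size $n$.

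The work is essentially bookkeeping with block-matrix algebra, so there is no serious obstacle. The one subtlety worth flagging is that in Part~1 the matrix $V$ need not be unique when $A$ is singular; however, $V A V^{\top}$ depends only on $W$, since if $V_1 A = V_2 A = W$, then $(V_1 - V_2)A = 0$ forces $(V_1 - V_2) A (V_1 - V_2)^{\top} = 0$, and expanding shows $V_1 A V_1^{\top} = V_2 A V_2^{\top}$. The existence of at least one such $V$ is guaranteed by the hypothesis that the columns of $W^{\top}$ lie in $\Range(A)$, which is precisely the content of Lemma~\ref{lem:rangeA1}.
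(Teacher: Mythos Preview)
Your proof is correct and is exactly the standard Schur-complement congruence/factorization argument. The paper itself does not supply a proof of this lemma; it treats the result as well known and refers the reader to \cite{zhang2006schur}, so there is nothing to compare against beyond noting that your argument is the textbook one. One minor point: your well-definedness check that $V A V^{\top}$ is independent of the choice of $V$ is stated a bit loosely (expanding $(V_1-V_2)A(V_1-V_2)^{\top}=0$ does not by itself yield $V_1AV_1^{\top}=V_2AV_2^{\top}$); a clean fix is to note that $VAV^{\top} = (VA)A^{+}(AV^{\top}) = W A^{+} W^{\top}$, which depends only on $W$.
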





The following lemma describes the set of optimal solutions of ($\Pc_2$). To set up the next lemma and its proof, note that for any $A \in \SS^n$ with zero diagonal, $\dv := -\lambda_n(A) \ev$
gives a feasible solution of ($\Pc_2$) with objective value at most $n-1$. Next, notice that ($\Pc_2$) has a feasible solution with objective value $r$ iff there exists $U \in \R^{n \times r}$ such that $\rank(U) = r$ and $A +\Diag(\dv) = U U^{\top}$. Indeed, $\rank(U) = r$ iff there exists $J \subseteq \{1,2, \ldots,n\}$ such that $|J|=r$ and the matrix $U_J:= U(J,:)$ is nonsingular. Suppose such $J$ exists. We may assume $J=\{1,2, \ldots, r\}$, and let $\bar{J} := \{1,2, \ldots,n\} \setminus J$. Then we have the block matrix equation
\[
A +\Diag(\dv) = \begin{bmatrix}
    U_J U_J^{\top} & U_J U_{\bar{J}}^{\top} \\
    U_{\bar{J}} U_J^{\top}  &  U_{\bar{J}} U_{\bar{J}}^{\top}
\end{bmatrix}
\]
with (1,1) block symmetric positive definite. The following lemma exploits this structure. To obtain the system in the statement of the lemma, one can directly analyze the above matrix equation or utilize the properties of the Schur complements outlined in the previous two lemmas.



\begin{lem}
\label{lem_char_sys}
Let $n \geq 2$ be an integer, $A \in \SS^n$ with $\diag(A) = \mathbf{0}$ and $r \in \{1,2, \ldots,n-1\}$ be given. Then, $\dv \in \R^n$ is a feasible solution of $(\Pc_2)$ with objective function value $r$ if and only if there exists  $J \subseteq \{1,2, \ldots, n\}$ such that $|J| = r$, and with $\Bar{J} := \{1,2,\dots,n\} \setminus J$ the following system has a solution $(\dv,V) \in \R^n \times \SS^{r}:$
\begin{subequations}
\label{char_sys}
\begin{align*}
    & \left[A(J,i) \otimes A(J, j)\right]^{\top} \vectize(V) = A_{ij}, \forall i,j \in \Bar{J}, i < j ; \\ \tag{\ref{char_sys}} 
    & \left[A(J,i) \otimes A(J,i)\right]^{\top} \vectize(V) = d_i, i \in \Bar{J}; \\
    & \ev_i^{\top} V^{-1} \ev_j = A_{ij}, \forall i,j \in J, i < j ;\\
    & (V^{-1})_{ii} = d_i, i \in J; \\
    & V \in \SS_{++}^{r}.
\end{align*}

\end{subequations}
\end{lem}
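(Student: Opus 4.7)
The plan is to recognize that for any feasible $\dv$ of $(\Pc_2)$, the matrix $M:=A+\Diag(\dv)$ is symmetric PSD, and the objective value $r$ equals $\rank(M)$. Since $M \succeq 0$ has rank $r$, I can write $M=UU^\top$ with $U\in\R^{n\times r}$ of full column rank; hence some set $J$ of $r$ rows of $U$ is linearly independent, and the corresponding $r\times r$ principal submatrix $M_{JJ}=U(J,:)U(J,:)^\top$ is positive definite. After relabeling, assume $J=\{1,\ldots,r\}$, $\bar{J}=\{r+1,\ldots,n\}$, and partition $M$ into blocks accordingly. The entries of $M$ decompose as: $(M_{JJ})_{ii}=d_i$ and $(M_{JJ})_{ij}=A_{ij}$ for $i\ne j$ in $J$; $(M_{J\bar{J}})_{ij}=A_{ij}$; and analogously for the $(\bar{J},\bar{J})$ block.

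Next I would invoke Lemma~\ref{lemma_schur}(1) with $V:=M_{JJ}^{-1}\in\SS_{++}^{r}$: because $M_{JJ}\succ 0$, the conditions $M\succeq 0$ and $\rank(M)=r=\rank(M_{JJ})$ are jointly equivalent to the vanishing of the Schur complement, i.e.,
\[
M_{\bar{J}\bar{J}} \;=\; M_{\bar{J}J}\,V\,M_{J\bar{J}}.
\]
Reading off entry $(i,j)$ for $i,j\in\bar{J}$ gives $A(J,i)^\top V A(J,j)$ on the right-hand side (using that $M_{J\bar{J}}(:,i)=A(J,i)$ by symmetry and the fact that $A$ has zero diagonal). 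The standard Kronecker identity $x^\top V y = (x\otimes y)^\top \vectize(V)$ then rewrites these entries as $[A(J,i)\otimes A(J,j)]^\top \vectize(V)$, so the off-diagonal block equation becomes the first line of \eqref{char_sys} and the diagonal gives the second line (with right-hand side $M_{ii}=d_i$ for $i\in\bar{J}$). Finally, $V^{-1}=M_{JJ}$ reads off entrywise as the third and fourth lines of \eqref{char_sys}, completing the forward direction.

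For the reverse direction, suppose $(\dv,V)\in\R^n\times\SS_{++}^r$ satisfies the system for some $J$ with $|J|=r$. Define $M:=A+\Diag(\dv)$ and partition as above. The third and fourth equations of \eqref{char_sys} give $M_{JJ}=V^{-1}$, so the $(J,J)$ block is positive definite. Reversing the Kronecker identity, the first two equations say $[M_{\bar{J}J}\,V\,M_{J\bar{J}}]_{ij}=A_{ij}=M_{ij}$ for distinct $i,j\in\bar{J}$ and $[M_{\bar{J}J}\,V\,M_{J\bar{J}}]_{ii}=d_i=M_{ii}$ for $i\in\bar{J}$, so the Schur complement of $M_{JJ}$ in $M$ is zero. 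Applying Lemma~\ref{lemma_schur}(1) once more yields $M\succeq 0$ with $\rank(M)=r$, which is precisely the statement that $\dv$ is feasible for $(\Pc_2)$ with objective value $r$.

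No step is truly difficult; the only things to keep straight are (i) the Kronecker/vec convention needed to translate $A(J,i)^\top V A(J,j)$ into the stated inner product with $\vectize(V)$, and (ii) the fact, used in the forward direction, that a rank-$r$ PSD matrix always admits a positive definite $r\times r$ principal submatrix (a direct consequence of the $M=UU^\top$ factorization). Both are standard, so the bulk of the write-up is simply a bookkeeping argument that the system \eqref{char_sys} is exactly the entrywise form of the Schur-complement characterization of rank-$r$ PSD completions having a fixed nonsingular principal $r\times r$ block.
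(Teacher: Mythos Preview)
Your proposal is correct and follows essentially the same approach as the paper's proof: both identify a full-rank $r\times r$ principal block (via a factorization $M=UU^\top$), set $V$ to be its inverse, and then read off the system \eqref{char_sys} entrywise. The only cosmetic difference is that you invoke the Schur complement lemma (Lemma~\ref{lemma_schur}) explicitly in both directions, whereas the paper constructs the factor $U$ by hand in the reverse direction; the content is the same.
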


\begin{proof}
Let $\dv \in \R^n$ be a feasible solution of ($\Pc_2$) with objective value $r \in \{1,2, \ldots,n-1\}$. Then, there exists $U \in \R^{n \times r}$ such that $U U^{\top} - \Diag(\dv) = A$ and $\rank(U) = r$. Let $J \subset \{1,2,\dots,n\}$ be such that $\rank(U_J) = |J| = r$. Further, let $V := (U_J U_J^{\top})^{-1} \in \SS_{++}^{r}$. Then, $(\dv,V)$ satisfies all the constraints in $(\ref{char_sys})$.

Conversely, for some $J \subset \{1,2,\dots,n\}$ with $|J| = r$, suppose $(\dv,V)$ solves $(\ref{char_sys})$. Compute $\widehat{U} \in \R^{r \times r}$ via the decomposition $\widehat{U} \widehat{U}^{\top} = V^{-1}$. Let 
\[
U(i,:) := \begin{cases} \widehat{U}(i,:), \text{ if } i \in J \\
A(i,J)\widehat{U}^{-T}, \text{ if } i \in \Bar{J}
\end{cases}
\]
where $\Bar{J} := \{1,2,\dots,n\} \setminus J$.
Then $[U U^{\top} - \Diag(\dv)]_{ij} = \begin{cases} 0, \text{ if } i = j; \\ A_{ij}, \text{ if } i \neq j. \end{cases}$

Therefore, $\dv$ is a feasible solution to ($\Pc_2$) with objective value $r$.
\end{proof}


Since system (\ref{char_sys}) gives a complete characterization of all solutions $(\dv,V)$ corresponding to the given $J$, we can design an algorithm based on a systematic way of solving this system. During the first phase, we solve a linear system of equations and exploit the fact that we can check positive definiteness of a given symmetric matrix efficiently and in a numerically stable way. The next algorithm takes an index set $J$ as part of its input and tries to solve system \eqref{char_sys} using only those constraints that are easy to handle.

\SetArgSty{textnormal}

\begin{algorithm}
\label{alg1}
\SetAlgoLined
\SetKwInOut{Input}{input}\SetKwInOut{Output}{output}
\Input{$A \in \SS^n, \diag(A)=\mathbf{0}, J \subseteq \{1,2, \ldots, n\}.$ }
\Output{\textcolor{blue}{A feasible solution $\dv$ to $\Pc_2$ with objective value $|J|$ or an infeasibility certificate or a certificate that there may be a solution to the original problem but $U(J,:)$ is singular in any such solution or valid linear equations for feasible $V$ (which can be used in Algorithm 2).} }

$\Bar{J} := \{1,2, \ldots, n \} \backslash J, r := |J| $, solve:
\begin{equation}
    \label{inner}
    [A(J,i) \otimes A(J,j)]^{\top} \vectize(V) = A_{ij}, \forall i < j \text{ and } i,j \in \Bar{J}
\end{equation}

\uIf{(\ref{inner}) has no solution}{\Return an infeasibility \text{certificate} that either the original problem is infeasible or there may be a solution to the original problem but $U(J,:)$ is singular in any such solution.} 
\uElseIf{(\ref{inner}) has infinitely many solutions}{\Return $B \in \R^{k \times r(r+1)/2},\bv \in \R^{k}$ such that $\rank(B) = k$, $V$ solves (\ref{inner}) if and only if $B \svec(V) = \bv$ }
\ElseIf{the system (\ref{inner}) has a unique solution $V$ }{
\uIf{$V \notin \SS_{++}^r$ \textbf{ or }  $\ev_i^{\top} V^{-1} \ev_j \neq A_{ij}, \exists i < j \text{ and } i,j \in J$}{\Return the corresponding certificate that either the original problem is infeasible or there may be a solution to the original problem but $U(J,:)$ is singular in any such solution.}
\Else{compute $\wv \in \R^n$  by: $w_i := \begin{cases} (V^{-1})_{ii}, \forall i \in J \\ A(J,i)^{\top} V A(J,i), \forall i \in \Bar{J} \end{cases}$ \\
\Return $\dv:= \wv$
}
}









 \caption{linear solver with a given index set}
\end{algorithm}

If Algorithm~\ref{alg1} fails to solve the problem ($\Pc_2$) and returns a linear system of equations whose solution set contains all solutions to ($\Pc_2$) for the given index set $J$, we make use of this information and proceed with the second phase in which we solve a system of polynomial equations.

\begin{algorithm}
\label{alg2}
\SetAlgoLined
\SetKwInOut{Input}{input}\SetKwInOut{Output}{output}
\Input{$A \in \SS^n, \diag(A)=\mathbf{0}, J := \{j_1,j_2,\dots,j_r \} \subseteq \{1,2,...,n\}, B \in \R^{k \times \frac{r(r+1)}{2}},\bv \in \R^k$ that $\rank(B)= k$}
\Output{\textcolor{blue}{Feasible solution $\dv$ of $\Pc_2$ with objective value of $k$ or an infeasibility certificate.} }

solve:
\begin{equation}
    \label{inner2}
    \left\{
    \begin{aligned}
    & B \svectize(V) = \bv   \\ 
    & A_{ij} \det(V) - \adj(V(i,j)) = 0, \forall i < j , i,j \in J \\
    & \det(V_{J_kJ_k}) z_k^2 = 1 ,\forall k \in \{1,2,\dots,r\} \text{ where } J_k := \{ j_1,j_2,\dots,j_k\} \\
    & V \in \SS^r, \zv \in \R^r
    \end{aligned}
    \right.
\end{equation}

\uIf{ (\ref{inner2}) does not  have a solution  }{\Return an infeasibility certificate}
\Else{let $(V,\zv)$ denote a solution of (\ref{inner2}) \\ compute $\wv \in \R^n$  by: $\wv_{i} := \begin{cases} (V^{-1})_{ii}, \forall i \in J \\ A(J,i)^{\top} V A(J,i), \forall i \in \Bar{J} \end{cases}$ \\
\Return $\dv:= \wv$ }




 
 \caption{nonlinear solver with a given index set}
\end{algorithm}

In Algorithm~\ref{alg2},  $B \svec(V) = \bv$ may be written as $[A(J,i) \overset{s}{\otimes} A(J,j)] \svectize (V) = A_{ij}$ where $\overset{s}{\otimes}: \R^r \times \R^r \to \R^{r(r+1)/2}$ is symmetric Kronecker product and $\svectize: \SS^r \to \R^{r (r+1)/2}$ returns the lower triangular part of a symmetric matrix. Also $A_{ij} \det(V) - \adj(V(i,j)) = 0$ can be written as $\ev_i^{\top} V^{-1} \ev_j = A_{ij}$, provided $\det(V) \neq 0.$

Since Algorithm~\ref{alg1} can be implemented very efficiently for large scale instances, in some applications it might be worthwhile run Algorithm~\ref{alg1} for many different subsets $J$ before resorting to Algorithm~\ref{alg2} (as one might get lucky). \textcolor{blue}{The input of Algorithm~\ref{alg1} is the original data and a prescribed index set and the algorithm can be run for different index sets independently. Therefore, the algorithm is easily parallelizable.} 
However, in this approach, in the worst case, we might have to consider every subset $J$ (of cardinality $r$) of $\{1,2, \ldots,n\}$ to determine whether $(\Pc_2$) has a feasible solution with objective value $r$:

\begin{algorithm}
\label{alg3}
\SetAlgoLined
\SetKwInOut{Input}{input}\SetKwInOut{Output}{output}
\Input{ $A \in \SS^n, r \in \{1,2, \ldots, n-1\}$ }
\Output{\textcolor{blue}{Feasible solution of $\Pc_2$ with objective value of $r$ or an infeasibility certificate.}}
\For{all possible $J \subseteq \{1,2, \ldots,n\}$ such that $|J| = r$  }{

Run \textbf{Algorithm 1} with input $A$ and $J$. If \textbf{Algorithm 1} succeeds, then return its output. Otherwise, if \textbf{Algorithm 1} fails and returns a linear system of equations, run \textbf{Algorithm 2} with input $A$ and $J$ and the returned linear system. If \textbf{Algorithm 2} succeeds, then return its output.


}

 \caption{solver without a given index set}
\end{algorithm}

\begin{example}
Let $$A := \begin{bmatrix}
    0 & 1 & 2 & 1 & 0 \\
    1 & 0 & 2 & 0 & 1 \\
    2 & 2 & 0 & 0 & 0 \\
    1 & 0 & 0 & 0 & 1 \\
    0 & 1 & 0 & 1 & 0 \\
\end{bmatrix},$$
and consider the corresponding instance of ($\Pc_2$). The 3-by-3 submatrix of $A+\Diag(\dv)$ identified by the rows
$\{1,2,3\}$ and the columns $\{1,4,5\}$ is $\begin{bmatrix}
    d_1 & 1 & 0 \\
    1 & 0 & 1 \\
    2 & 0 & 0 \\
\end{bmatrix}$. Since this matrix is nonsingular for every $\dv \in \R^5$, $\rank(A+\Diag(\dv)) \geq 3$ for every $\dv \in \R^5$. Hence, the optimal objective value of ($\Pc_2$) is at least three. Next, consider $\dv := [2,2,3,2,2]^{\top}$. Then, $A + \Diag(\dv) \succeq 0 $ and $\rank(A + \Diag(\dv)) = 3$. So, $\dv$ is an optimal solution. If we call Algorithm 1 with inputs $A$ and $J := \{1,2,3\}$, the system (\ref{inner}) will have infinitely many solutions described by
\[
V^{-1} = \begin{bmatrix}
    \alpha & 1 & 2 \\
    1 & \beta & 2 \\
    2 & 2 & \gamma
\end{bmatrix} \textup{ and } V_{21} = 1,
\]
where $\alpha,\beta,\gamma \in \R$. So, Algorithm 1 will fail to completely solve ($\Pc_2$) with input $A$ and $J$. Therefore, for this instance, Algorithm 2 is needed to solve ($\Pc_2$). The solution set of the resulting problem is characterized by two parameters $\alpha$, $\beta$ (and there is a unique value for $\gamma$, $\gamma := 4(\alpha+\beta -1)/(\alpha \beta)$):
\[
A (\alpha,\beta) := \begin{bmatrix}
    \alpha & 1 & 2 & 1 & 0 \\
    1 & \beta & 2 & 0 & 1 \\
    2 & 2 & \frac{4(\alpha+\beta -1)}{\alpha \beta} & 0 & 0 \\
    1 & 0 & 0 & \frac{\beta}{\alpha -1} & 1 \\
    0 & 1 & 0 & 1 & \frac{\alpha}{\beta -1} \\
\end{bmatrix},\]
where $\alpha >0$, $\alpha \beta > \max\{1, \alpha+\beta -1\}$. Thus, Algorithm 2 will return the diagonal of one of the above given infinitely many matrices.

Let us modify this example so that for our subset $J$, there are exactly two optimal solutions. We can require that the range of $(A+\Diag(\dv))$ contain the vector
$[1, 1, -1, 5, 5]^{\top}$ for every feasible solution $\dv$ for the
original matrix $A$, by redefining $A$ as
\[
A := \begin{bmatrix}
    0 & 1 & 2 & 1 & 0 & 1\\
    1 & 0 & 2 & 0 & 1 & 1\\
    2 & 2 & 0 & 0 & 0 & -1\\
    1 & 0 & 0 & 0 & 1 & 5\\
    0 & 1 & 0 & 1 & 0 & 5\\
    1 & 1 & -1 & 5 & 5 & 0\\
\end{bmatrix}.
\]
Then, the optimal objective value is still equal to three, and all optimal solutions are characterized by $\alpha \in \{2,4\}$ and $\beta:=\alpha$ (and for each of the choices, the unique entry for $d_6$ that does not increase the rank).
\end{example}

The above example also shows that there are instances of ($\Pc_2$) with exactly two optimal solutions. This observation exposes possible complexity of optimal solution sets of ($\Pc_2$) and hints at the existence of possible gadgets for NP-hardness proofs.

Since Algorithm~\ref{alg3} can determine whether a given instance of ($\Pc_2$) has a feasible solution with objective value $r$, if the optimal objective value is $\bar{r}=O(1)$, we can solve the problem instance by enumerating all possible values for $r \in \{1,2, \ldots, \bar{r}\}$. 




\begin{thm}
\label{thm_given}
\label{rem_ukn_small}
If $r = O(1)$, then Algorithm~\ref{alg3} can be implemented in polynomial time. 
Thus, if the optimal value of an instance of ($\Pc_2$) is $\bar{r}=O(1)$, then such instances of ($\Pc_2$) can be solved in polynomial time by enumerating all possible ranks from $1$ to $\bar{r}$ and by calling Algorithm~\ref{alg3} for each possible rank.
\end{thm}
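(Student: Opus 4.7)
The plan is to verify that each of the three algorithms runs in polynomial time when $r = O(1)$, and that Algorithm~\ref{alg3} invokes them only polynomially many times. First, observe that the outer loop of Algorithm~\ref{alg3} ranges over subsets $J \subseteq \{1,\ldots,n\}$ with $|J| = r$; there are $\binom{n}{r} = O(n^r)$ such subsets, which is polynomial in $n$ whenever $r$ is an absolute constant. The correctness of returning ``first success or overall infeasibility certificate'' then follows from Lemma~\ref{lem_char_sys}, since that lemma guarantees that every feasible $\dv$ of objective value $r$ corresponds to some admissible $J$ and some solution of system~\eqref{char_sys}.

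Next, I would show that a single call to Algorithm~\ref{alg1} with $|J|=r$ runs in time polynomial in $n$. The unknown $V\in\SS^r$ has $\binom{r+1}{2} = O(1)$ scalar entries, and the linear system~\eqref{inner} has $O(n^2)$ equations, so Gaussian elimination produces in polynomial time either an infeasibility certificate, a unique $V$, or a reduced affine description $B\svec(V)=\bv$ of the solution set. In the unique case, checking $V\in\SS^r_{++}$ (e.g., via Cholesky factorization), evaluating $\ev_i^\top V^{-1}\ev_j = A_{ij}$ for $i,j\in J$, and assembling the output vector $\wv$ require only polynomially many arithmetic operations on polynomially bounded numbers.

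The main obstacle is bounding the running time of Algorithm~\ref{alg2}, which requires solving a system of polynomial (rather than linear) equations. The essential observation is that the total number of variables is still constant when $r=O(1)$: the entries of $V\in\SS^r$ together with the auxiliary variables $z_1,\ldots,z_r$ number $\binom{r+1}{2}+r = O(r^2) = O(1)$. The degrees of the polynomials appearing in~\eqref{inner2} are also bounded by a constant (at most $r+2$), while the total number of polynomial equations is $O(n^2)$. Invoking known algorithms from computational real algebraic geometry (for instance, Renegar's algorithm, or the algorithms of Basu, Pollack and Roy, which work in both the Blum--Shub--Smale and Turing machine models), deciding solvability of and producing a sample point from a polynomial system in $k$ variables with $s$ equations of degree at most $d$ can be done in time $(sd)^{O(k)}$. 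With $k=O(1)$ and $d=O(1)$, this bound simplifies to a polynomial in $s$, hence in $n$. One then verifies that the returned $(V,\zv)$ satisfies the nondegeneracy $\det(V)\neq 0$ (built into the $z_k^2\det(V_{J_kJ_k})=1$ constraints), so $V^{-1}$ can be computed and $\wv$ assembled as in Algorithm~\ref{alg1}.

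Combining these estimates, a single execution of Algorithm~\ref{alg3} takes $O(n^r)$ iterations of the outer loop, each costing $\mathrm{poly}(n)$ time when $r$ is constant; hence the overall cost is polynomial in $n$. For the second assertion of the theorem, if the optimal value is $\bar{r}=O(1)$, we simply call Algorithm~\ref{alg3} for $r=1,2,\ldots,\bar{r}$ and return the output associated with the smallest $r$ for which a feasible solution is found; this amounts to $O(1)$ additional calls, preserving the polynomial-time bound. Correctness follows from Lemma~\ref{lem_char_sys} together with the fact that $\dv := -\lambda_n(A)\ev$ always gives a feasible solution of $(\Pc_2)$, so the enumeration over $r\leq \bar{r}$ is nonempty.
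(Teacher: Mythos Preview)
Your argument is correct and follows essentially the same route as the paper: bound the number of subsets $J$ by $\binom{n}{r}=O(n^r)$, observe that Algorithm~\ref{alg1} is ordinary linear algebra on a system of polynomial size, and handle Algorithm~\ref{alg2} by invoking a decision procedure for the first-order theory of the reals on a system in $O(r^2)$ variables.

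One point worth sharpening: you state that system~\eqref{inner2} has $O(n^2)$ polynomial equations, but in fact it has only $O(r^2)$. The block $B\,\svec(V)=\bv$ is the reduced full-rank system returned by Algorithm~\ref{alg1}, so $k=\rank(B)\le r(r+1)/2$; the remaining two blocks contribute $\binom{r}{2}$ and $r$ equations respectively. Consequently, the paper concludes that Algorithm~\ref{alg2} runs in $O(1)$ time (with constants depending doubly-exponentially on $r$ via cylindrical algebraic decomposition), rather than merely $\mathrm{poly}(n)$. Your looser count still yields a polynomial-time bound via the $(sd)^{O(k)}$ complexity you cite, so the overall conclusion is unaffected, but the paper's tighter count is what makes the later running-time estimate $O(n^{2^{O(\bar r^2)}})$ possible.
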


\begin{proof}
Algorithm~\ref{alg1} can be implemented to run in polynomial time, since it involves solving a linear system of equations whose size is bounded by a polynomial function of the size of the input $A$ and performing a Cholesky decomposition (e.g., $LDL^{\top}$) on an $r$-by-$r$ matrix. If $r=O(1)$, since the system (\ref{inner2}) is a system of polynomial equations with $O(r^2)=O(1)$ variables and $O(r^2)=O(1)$ equations, Algorithm~\ref{alg2} can be implemented to run in $O(1)$ time (e.g., by cylindrical algebraic decomposition, see for instance, \cite{Collins1975,RRE2000,BPR2006,BD2007,BB2012} and the references therein). In enumerating all possible ranks $\{1,2, \ldots, \bar{r}\}$, Algorithm~\ref{alg3} calls Algorithm~\ref{alg1} and Algorithm~\ref{alg2} at most $$\sum_{r=1}^{\bar{r}} \choose{n}{r}=O\left(n^{O(1)}\right)$$ times. Therefore, if $\bar{r}=O(1)$, we can solve ($\Pc_2$) in polynomial time.
\end{proof}



The above approach can be extended to solving $(\Pc_1$).

\begin{thm} 
\label{thm_given_2}
Every instance of ($\Pc_1$) with optimal value $\bar{r}=O(1)$ can be solved in polynomial time.
\end{thm}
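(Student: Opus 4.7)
The plan is to reduce any instance of ($\Pc_1$) to a box-constrained analogue of ($\Pc_2$) and then run essentially the same enumeration-plus-polynomial-solver algorithm. Given $A \in \SS^n_+$, I would set $\tilde{A} := A - \Diag(\diag(A))$, so that $\diag(\tilde{A}) = \mathbf{0}$, and substitute $\dv' := \diag(A) - \dv$. Then $A - \Diag(\dv) = \tilde{A} + \Diag(\dv')$, the condition $A - \Diag(\dv) \succeq 0$ becomes $\tilde{A} + \Diag(\dv') \succeq 0$, and $\dv \geq \mathbf{0}$ becomes $\dv' \leq \diag(A)$. Hence ($\Pc_1$) is equivalent to minimizing $\rank(\tilde{A} + \Diag(\dv'))$ subject to $\tilde{A} + \Diag(\dv') \succeq 0$ and the additional componentwise upper bound $\dv' \leq \diag(A)$. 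This is exactly an instance of ($\Pc_2$) on $\tilde{A}$ with one extra linear inequality on $\dv'$.

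Next, I would strengthen Lemma~\ref{lem_char_sys} to this constrained setting. For each candidate $J \subseteq \{1,\ldots,n\}$ with $|J| = r$, the problem has a rank-$r$ feasible solution associated with $J$ if and only if the system \eqref{char_sys} (with $\tilde{A}$ and $\dv'$ in place of $A$ and $\dv$) is solvable together with
\begin{align*}
[\tilde{A}(J,i) \otimes \tilde{A}(J,i)]^{\top} \vectize(V) &\leq A_{ii}, \quad \forall i \in \bar{J}, \\
(V^{-1})_{ii} &\leq A_{ii}, \quad \forall i \in J.
\end{align*}
After clearing $\det(V)$ (which is strictly positive on $\SS^r_{++}$), these become polynomial inequalities in $\vectize(V)$ of degree at most $r$.

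I would then modify Algorithms~\ref{alg1} and \ref{alg2} to honour these extra inequalities: Algorithm~\ref{alg1} additionally tests $\wv \leq \diag(A)$ before accepting a candidate $\dv'$, and Algorithm~\ref{alg2} appends the two families of polynomial inequalities above to the system~\eqref{inner2}. Algorithm~\ref{alg3} remains untouched; it enumerates all $\binom{n}{r}$ index sets $J$, and we in turn enumerate candidate ranks $r = 1, 2, \ldots, \bar{r}$.

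The main obstacle, relative to the proof of Theorem~\ref{thm_given}, is verifying that the added inequalities do not break the polynomial-time guarantee. The enlarged system in Algorithm~\ref{alg2} still has only $O(r^2) = O(1)$ variables and $O(n)$ polynomial constraints of degree $O(r) = O(1)$. Cylindrical algebraic decomposition \cite{Collins1975, RRE2000, BPR2006, BD2007, BB2012} is doubly exponential in the number of variables but polynomial in the number and degree of the constraints once the number of variables is fixed, so each invocation of the modified Algorithm~\ref{alg2} still runs in time polynomial in $n$ and the bit-size of $A$. Combined with the $O(n^{\bar{r}}) = O(n^{O(1)})$ choices of $J$ and the $\bar{r} = O(1)$ candidate ranks, the whole procedure solves ($\Pc_1$) in polynomial time.
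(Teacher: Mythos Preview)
Your approach is correct and genuinely different from the paper's.  The paper's Remark warns only against introducing $d_1,\ldots,d_n$ as \emph{new variables} (which would make the CAD exponential in $n$); you do not do this.  Instead you keep the $O(r^2)$ variables $(V,\zv)$ and append $O(n)$ polynomial inequalities in those same variables.  Since the complexity of CAD (or of the decision procedures of Renegar/Basu--Pollack--Roy) is polynomial in the number and degree of the input polynomials once the number of variables is fixed, each modified call to Algorithm~\ref{alg2} runs in $n^{O(1)}$ time, and the rest of your argument goes through.

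The paper takes a more roundabout route: it deliberately keeps the system fed to the real-algebraic solver at size $O(r^2)\times O(r^2)$ (hence each call is $O(1)$ in $n$), extracts one sample point per connected component, tests those finitely many points against the $n-r$ linear inequalities $d_i(V)\ge 0$ for $i\in\bar J$, and only when all samples fail does it intersect with a single facet $d_i(V)=0$ and recurse.  The key observation there is that at most $O(r^2)$ non-redundant linear equations can be adjoined before the system becomes zero-dimensional, so the recursion depth is $O(r^2)$, yielding an overall $O(n^{r^2})$ bound.  Your direct approach is conceptually cleaner and yields the same polynomial bound; the paper's approach has the practical advantage of never handing a system with $\Theta(n)$ constraints to the CAD routine, and makes the dependence on $n$ more explicit.
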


\begin{rem}
It should be noted that the obvious approach for extending the preceding algorithm and our proof to the case of $(\Pc_1)$, namely, introducing $d_1,\ldots,d_n$ as variables and constraining them either via inequalities, i.e., $d_j\ge 0$ for $j=1,\ldots,n$, or via equalities, i.e., $d_j=y_j^2$ for  $j=1,\ldots,n$, where $y_j$'s are new real variables and repeating the same arguments on this modified system, will not establish this theorem.  This is because the introduction of $O(n)$ new variables into the polynomial system will lead to an algorithm exponential in $n$.
Therefore, the following proof develops an algorithm that uses only $O(r^2)$ variables.
\end{rem}

\begin{proof}
First, we make modify \eqref{char_sys} to account for the fact that in $(\Pc_1)$, the diagonal entries of the given $A$ are positive and that $\dv$ is subtracted from
rather than added to diagonal entries to obtain the following system.
\begin{equation}
\begin{aligned}
    & \left[A(J,i) \otimes A(J, j)\right]^{\top} \vectize(V) = A_{ij}, \forall i,j \in \Bar{J}, i < j ; \\ 
    & \left[A(J,i) \otimes A(J,i)\right]^{\top} \vectize(V) = A(i,i)-d_i, i \in \Bar{J}; \\
    & \ev_i^{\top} V^{-1} \ev_j = A_{ij}, \forall i,j \in J, i < j ;\\
    & (V^{-1})_{ii} = A(i,i)-d_i, i \in J; \\
    & V \in \SS_{++}^{r}; \\
    & d_i\ge 0, \forall i=1,\ldots,n.
\end{aligned}
\label{char_sys2}
\end{equation}
Next, we treat $d_i$ for $i\in J$ as new system variables, and constrain them via $d_i\ge 0$ (or, equivalently, $d_i=y_i^2$ as noted above). This introduces an additional $O(r)$ variables into the system. We modify Algorithm~\ref{alg1} and Algorithm~\ref{alg2} accordingly.

We drop the second constraint of \eqref{char_sys2}, and we narrow the scope of the last constraint to: $d_i\ge 0$ for all $i\in J$. Next, treat $d_i$ for each $i\in \bar{J}$ as a function of the $O(r^2)$ variables:
\begin{equation}
d_i(V)=A(i,i) - \left[A(J,i) \otimes A(J,i)\right]^{\top} \vectize(V).
\label{eq:diV}
\end{equation}
Whenever Algorithm~\ref{alg1} terminates with a unique solution, we simply check whether $d_i(V) \geq0$ for $i\in\bar{J}$. If so, we stop, we found a desired solution (for the $r$ value being tested). Otherwise, either Algorithm~\ref{alg1} correctly identifies the fact that ($\Pc_1$) has no feasible solution with the tested objective value $r$ where the tested subset $J$ gives a nonsingular $U_J$, or Algorithm~\ref{alg1} returns a linear system of equations which hold at every solution of ($\Pc_1$) corresponding to the given $J$. 

For Algorithm~\ref{alg2}, to solve the system of polynomial equations in $(V,\zv,\dv(J))$, we choose an implementation (e.g., one based on cylindrical algebraic decomposition) that returns a point from each connected component of the solution set of the polynomial system of equations (\ref{inner2}). Since $r=O(1)$, the number of connected components of the solution set of (\ref{inner2}) is $O(1)$. If one of these returned points corresponds to a nonnegative $\dv(\bar J)$, we are done. Otherwise, each connected component either does not contain any points corresponding to a nonnegative $\dv(\bar J)$, or its corresponding points in the $\dv(\bar{J})$-space crosses at least one of the facets of the nonnegative orthant in dimension $n-r$, and such a crossing point is also a valid solution.   So, 
to find these crossings,
we add an equation corresponding to each facet of the nonnegative orthant and solve the new polynomial system with one additional equation in the original variables. This equation has the form $d_i(V)=0$ for some $i\in \bar{J}$, where $d_i(V)$ is the affine linear function of $V$ given by \eqref{eq:diV}.

Our argument about the status of the connected components above applies to each facet of the nonnegative orthant. Thus, we apply the procedure recursively.  
Since $(V,\zv,\dv(J))$ lies in $\SS^{r} \times \R^r\times \R^r$, the system is solved in $O(1)$ time.  At first glance it seems that the recursion depth is $O(n)$ since the recursion may need to eventually include every equation $d_i(V)=0$ for each $i\in\bar{J}$.  If the recursion proceeds to depth $n-r$, the algorithm is still exponential in $n$.

However, we can make the following observation.  Each added equation of the form $d_i(V)=0$ for some $i\in\bar{J}$ is a linear equation.  The 
number of non-redundant linear equations that can be added to a system of linear and nonlinear equations in $O(r^2)$ variables is \textcolor{blue}{at most} $O(r^2)$. 
Therefore, the recursion must stop after at most $O(r^2)$ levels.
Thus, the overall running time is $O(n^{r^2})$, where the leading coefficient is exponential in $r$.
\end{proof}


\textcolor{blue}{Both Theorem \ref{thm_given} and \ref{thm_given_2} indicate that ($\Pc_1$), ($\Pc_2$) are fixed-parameter tractable with respect to the parameter $\bar{r}$. The complexity of ($\Pc_1$) and ($\Pc_2$) depends on the implementation of Algorithm 2. If Algorithm 2 is solved by cylindrical algebraic decomposition, it can be done in $O(n^{2^{O(\bar{r}^2)}})$ time \cite{Collins1975}. Since we run Algorithm 2 up to $O(\bar{r} n^{\bar{r}})$ times and $\Bar{r} \leq n$, the total complexity is $O(\bar{r} n^{\bar{r}} n^{2^{O(\bar{r}^2)}}) = O(n^{2^{O(\bar{r}^2)}})$.}  
Beyond some special cases, such as when the optimal objective value of ($\Pc_1$), ($\Pc_2$) can be bounded by an absolute constant, we do not know how to construct polynomial time algorithms for these problems in general. Indeed, as we show in the next section, these problems are NP-hard in general.

\end{section}

\begin{section}{NP-hardness.}

\label{sec:np-hard}
In this section, we prove NP-hardness of problems $(\Pc_1)$, $(\Pch_1)$,  $(\Pct_1)$, $(\Pc_2)$, $(\Pch_2)$,  $(\Pc_3)$,
$(\Pch_3)$, and $(\Pct_3)$.  The proof of NP-hardness of $(\Pct_2)$ is deferred to an appendix for reasons explained below.

The reductions are all from 3-coloring, and all rely on a construction by Peeters \cite{PeetersM.J.P1996ORoF} described below.  We will provide the reductions for each problem and prove their correctness.  We will not provide explicit proofs that each construction is polynomial-time for a Turing machine since it should be apparent to the reader that each reduction involves writing down numbers whose length is polynomial in the size of the original input graph.

The following preliminary lemmas are used to establish the correctness of the reductions.

\begin{lem}
\label{lem_bound_of_inverse_of_sum}
Let $A,B \in \R^{n \times n}$ such that $A^{-1}$ exists and $\norm{ B}_F\cdot\norm{ A^{-1} }_F < 1$. Then $A+B$ is invertible and
$(A+B)^{-1} =  A^{-1} + Z$ where $$\norm{Z}_F \leq \norm{B}_F (\norm{A^{-1}}_F )^2/\left( 1 - \norm{B}_F \norm{A^{-1}}_F\right).$$
\end{lem}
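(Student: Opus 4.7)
The plan is to use the standard Neumann series expansion applied to the factored form $A+B = A(I + A^{-1}B)$. First I will invoke submultiplicativity of the Frobenius norm, $\|XY\|_F \leq \|X\|_F \|Y\|_F$, to conclude that $\|A^{-1}B\|_F \leq \|A^{-1}\|_F \|B\|_F < 1$ under the hypothesis. This strict inequality guarantees that $I + A^{-1}B$ is invertible and that the Neumann series $\sum_{k=0}^{\infty}(-A^{-1}B)^k$ converges to $(I+A^{-1}B)^{-1}$ in Frobenius norm. Multiplying on the right by $A^{-1}$ then gives invertibility of $A+B$ together with the identity
\[
(A+B)^{-1} = A^{-1} + \sum_{k=1}^{\infty} (-A^{-1}B)^k A^{-1}.
\]

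Next I would identify $Z$ with the tail series $\sum_{k=1}^{\infty} (-A^{-1}B)^k A^{-1}$ and estimate $\|Z\|_F$ by the triangle inequality and repeated submultiplicativity:
\[
\|Z\|_F \;\leq\; \sum_{k=1}^{\infty} \|A^{-1}B\|_F^{k}\, \|A^{-1}\|_F \;\leq\; \|A^{-1}\|_F \sum_{k=1}^{\infty}\bigl(\|A^{-1}\|_F \|B\|_F\bigr)^{k}.
\]
Summing the resulting geometric series with ratio $\|A^{-1}\|_F \|B\|_F \in [0,1)$ yields exactly the claimed bound $\|B\|_F \|A^{-1}\|_F^{2}/\bigl(1 - \|B\|_F \|A^{-1}\|_F\bigr)$.

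There is no real obstacle here: the only subtlety is making sure to apply the Frobenius submultiplicativity consistently (it holds because the Frobenius norm dominates the spectral norm), and to justify term-by-term norm bounds via absolute convergence of the Neumann series under the hypothesis $\|B\|_F \|A^{-1}\|_F < 1$. The lemma will be invoked later as a perturbation tool in the NP-hardness reductions, so the statement is tight up to the constant and no further refinement is required.
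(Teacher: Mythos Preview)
Your proof is correct and follows essentially the same Neumann-series approach as the paper; the only cosmetic difference is that the paper factors $A+B=(I+BA^{-1})A$ and expands in powers of $-BA^{-1}$ rather than $-A^{-1}B$, but the resulting bound is obtained by the same submultiplicativity and geometric-series estimate.
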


\begin{proof}
We claim that the formula for $(A+B)^{-1}$ is
$$(A+B)^{-1}=A^{-1}  + A^{-1}  \sum_{i=1}^{\infty} (-BA^{-1})^{i} .$$
First, note that the sum on the right-hand side is convergent since, by submultiplicativity, the $i$th term in the summation is bounded by $\Vert BA^{-1}\Vert_F^i$, i.e., a decreasing geometric series with a convergent sum.  Therefore, the right-hand side is well defined.  Now we confirm that the right-hand side is the correct inverse by multiplying it by $A+B$:
\begin{align*}
   (A+B)\left(A^{-1}  + A^{-1}   \sum_{i=1}^{\infty} (-BA^{-1})^{i} \right)&=
   I + \left( \sum_{i=1}^{\infty} (-BA^{-1})^{i} \right)
   +BA^{-1}  \\
   &\quad\mbox{}
   + BA^{-1}\left( \sum_{i=1}^{\infty} (-BA^{-1})^{i} \right) \\
   &=I.
\end{align*}
Finally, by submultiplicativity and the triangle inequality,
\begin{align*}
\norm{A^{-1}   \sum_{i=1}^{\infty} (-BA^{-1})^{i}}_F & \leq \norm{A^{-1}}_F \cdot \norm{BA^{-1}}_F/\left( 1 - \norm{BA^{-1}}_F\right)  \\
& \leq  \norm{B}_F \cdot\norm{A^{-1}}_F ^2/\left( 1 - \norm{B}_F \cdot \norm{A^{-1}}_F\right).
\end{align*}




\end{proof}

An application of the previous lemma is the following fact that is used in this section and again in the appendix.  The rationale for this lemma is that $H$ denotes a small perturbation to $K$, while $\Delta$ a small perturbation to $D$.

\begin{lem}
Suppose $K,H\in\R^{m\times n}$, $D\in\SS^n$, $D\succ 0$, $\Delta\in\SS^n$, $\Vert H\Vert_F\le \Vert K\Vert_F/2$, and $\Vert \Delta\Vert_F\cdot \Vert D^{-1}\Vert_F\le 1/2.$  Then $D+\Delta$ is invertible, and
$\Vert (K+H)(D+\Delta)^{-1}(K+H)^{\top} - KD^{-1}K^{\top}\Vert_F$ is bounded above by
\[
\frac{1}{2} \Vert K\Vert_F\cdot \Vert D^{-1}\Vert_F
\left(9\Vert K\Vert_F \cdot\Vert D^{-1}\Vert_F\cdot \Vert \Delta\Vert_F + 5\Vert H\Vert_F\right).
\]
\label{lem:KDK_lemma}
\end{lem}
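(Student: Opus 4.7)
The plan is to split the difference into a perturbation in $D^{-1}$ and a perturbation in $K$, bound each piece separately using the previous lemma and submultiplicativity of the Frobenius norm, and then add everything up. First, since $\Vert\Delta\Vert_F\cdot\Vert D^{-1}\Vert_F\le 1/2<1$, Lemma~\ref{lem_bound_of_inverse_of_sum} applies to $A=D$ and $B=\Delta$: $D+\Delta$ is invertible and we may write $(D+\Delta)^{-1}=D^{-1}+Z$, with
\[
\Vert Z\Vert_F \le \frac{\Vert\Delta\Vert_F\,\Vert D^{-1}\Vert_F^2}{1-\Vert\Delta\Vert_F\,\Vert D^{-1}\Vert_F}\le 2\,\Vert\Delta\Vert_F\,\Vert D^{-1}\Vert_F^2.
\]

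Next, I would substitute this decomposition into the product and expand:
\[
(K+H)(D+\Delta)^{-1}(K+H)^{\top} = (K+H)D^{-1}(K+H)^{\top}+(K+H)Z(K+H)^{\top}.
\]
Expanding $(K+H)D^{-1}(K+H)^{\top}$ and subtracting $KD^{-1}K^{\top}$, the difference becomes
\[
KD^{-1}H^{\top}+HD^{-1}K^{\top}+HD^{-1}H^{\top}+(K+H)Z(K+H)^{\top}.
\]
Now I would bound the four terms using submultiplicativity. The first two each contribute at most $\Vert K\Vert_F\,\Vert D^{-1}\Vert_F\,\Vert H\Vert_F$. Using $\Vert H\Vert_F\le \Vert K\Vert_F/2$, the third contributes at most $\tfrac{1}{2}\Vert K\Vert_F\,\Vert D^{-1}\Vert_F\,\Vert H\Vert_F$. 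Summing these three pieces yields $\tfrac{5}{2}\,\Vert K\Vert_F\,\Vert D^{-1}\Vert_F\,\Vert H\Vert_F$, which is exactly the $5\Vert H\Vert_F$ contribution inside the parenthesis of the claimed bound.

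For the last term, I would use $\Vert K+H\Vert_F\le \Vert K\Vert_F+\Vert H\Vert_F\le \tfrac{3}{2}\Vert K\Vert_F$ together with the bound on $\Vert Z\Vert_F$ from the first step to obtain
\[
\Vert (K+H)Z(K+H)^{\top}\Vert_F\le \tfrac{9}{4}\Vert K\Vert_F^2\cdot 2\Vert\Delta\Vert_F\,\Vert D^{-1}\Vert_F^2=\tfrac{9}{2}\,\Vert K\Vert_F^2\,\Vert D^{-1}\Vert_F^2\,\Vert\Delta\Vert_F,
\]
which matches the $9\Vert K\Vert_F\,\Vert D^{-1}\Vert_F\,\Vert\Delta\Vert_F$ term in the claimed bound after factoring out $\tfrac{1}{2}\Vert K\Vert_F\,\Vert D^{-1}\Vert_F$. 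Adding the two contributions and factoring yields the stated inequality. There is no real obstacle beyond careful bookkeeping; the only subtlety is noticing that $\Vert H\Vert_F^2\le \tfrac{1}{2}\Vert K\Vert_F\,\Vert H\Vert_F$ (so the $HD^{-1}H^{\top}$ term does not need its own separate factor), and that $\Vert K+H\Vert_F\le \tfrac{3}{2}\Vert K\Vert_F$, which produces the $(3/2)^2=9/4$ that combines with the factor of $2$ from the $Z$ bound to give exactly the constant $9/2$.
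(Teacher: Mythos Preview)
Your proof is correct and follows essentially the same approach as the paper: define $Z=(D+\Delta)^{-1}-D^{-1}$, bound $\Vert Z\Vert_F\le 2\Vert\Delta\Vert_F\Vert D^{-1}\Vert_F^2$ via the previous lemma, split the difference into the same four terms $T_1=(K+H)Z(K+H)^{\top}$, $T_2=KD^{-1}H^{\top}$, $T_3=HD^{-1}K^{\top}$, $T_4=HD^{-1}H^{\top}$, and bound each by submultiplicativity. In fact, you carry out the constant-tracking explicitly, whereas the paper simply says ``submultiplicativity on each term, the triangle inequality, and overestimates of the constants yield the result.''
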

\begin{proof}
First, observe by the previous lemma that $D+\Delta$ is invertible, and furthermore, if
$Z$ denotes $(D+\Delta)^{-1}-D^{-1}$, then 
$$\Vert Z\Vert_F\le 2\Vert \Delta\Vert_F\cdot \Vert D^{-1}\Vert_F^2.$$ Now observe that 
$$(K+H)(D+\Delta)^{-1}(K+H)^{\top} - KD^{-1}K^{\top}=T_1+T_2+T_3+T_4,$$
where
\begin{align*}
    T_1 &= (K+H)Z(K+H)^{\top}, \\
    T_2 &= KD^{-1}H^{\top}, \\
    T_3 &= HD^{-1}K^{\top}, \\
    T_4 &= HD^{-1}H^{\top}.
\end{align*}
Submultiplicativity on each term $T_1,\ldots,T_4$, the triangle inequality, and overestimates of the constants yield the result.  
\end{proof}

We first show that $(\Pc_3)$, $(\Pch_3)$, and $(\Pct_3)$ are NP-hard.  All three hardness results use  Peeters' construction, which is as follows.

\begin{dfn}
Starting from the input graph $\newgraph$, the {\em Peeters supergraph of $\newgraph$} is a new graph $\newgraph' := (\newvertix',\newedge')$ such that $\newvertix\subseteq \newvertix'$, $\newedge\subseteq \newedge'$, and for each pair of distinct vertices $i,j \in \newvertix$, $\newgraph'$ additionally contains four vertices $ a_{ij},b_{ij},c_{ij},d_{ij}$ and also nine edges. Their connections are shown in Fig.~\ref{fig:Hij}.  We will denote this subgraph with six vertices and nine edges by $N_{ij}$.
\end{dfn}

So, to construct the Peeters supergraph of $\newgraph$, we start from $\newgraph$, and for every pair of vertices in $\newgraph$ we add a triangular prism connecting them. It follows that $|\newvertix'| = |\newvertix| + 2 |\newvertix| (|\newvertix| - 1)$ and $|\newedge'| = |\newedge| + \dfrac{9}{2} |\newvertix| (|\newvertix| - 1)$.


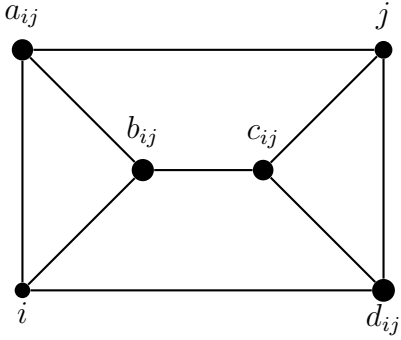
\begin{figure}[!h]
\centering
\begin{tikzpicture}[thick,scale=0.8]
\node[fill,circle,scale=0.3,label={[xshift=0.0cm, yshift=-0.7cm]$i$}] (i) at (0,0) [circle] {$i$};
\node[fill,circle,scale=0.3,label={$j$}] (j) at (6,4) [circle] {$j$};
\node[fill,circle,scale=0.3,label={$a_{ij}$}] (a) at (0,4) [circle] {$a_{ij}$};
\node[fill,circle,scale=0.3,label={$b_{ij}$}] (b) at (2,2) [circle] {$b_{ij}$};
\node[fill,circle,scale=0.3,label={$c_{ij}$}] (c) at (4,2) [circle] {$ c_{ij} $};
\node[fill,circle,scale=0.3,label={[xshift=0.0cm, yshift=-0.9cm]$d_{ij}$}] (d) at (6,0) [circle] {$ d_{ij} $};

\draw (i) edge[-] (a) (i) edge[-] (b) (a) edge[-] (b) ;
\draw (j) edge[-] (c) (j) edge[-] (d) (c) edge[-] (d) ;
\draw (a) edge[-] (j) (i) edge[-] (d)  (b) edge[-] (c);
\end{tikzpicture}
\caption{Triangular prism gadget $N_{ij}$}
\label{fig:Hij}
\end{figure}




Peeters argues (and it is relatively easy to convince oneself) that $\newgraph$ is 3-colorable if and only if $\newgraph'$ is 3-colorable.
Next, from $\newgraph'$, Peeters constructs a partially specified matrix $A(\newgraph') \in \SS^{\newvertix}$ such that
$$A(\newgraph')_{ij} := \begin{cases} 0, & \text{if } \{i,j\} \in \newedge(\newgraph'), \\
                             \text{unspecified but nonzero}, & \text{if } i = j, \\
                             \text{unspecified,} & \text{otherwise. }
\end{cases}$$

Peeters argues $\newgraph'$ (and therefore $\newgraph$) is 3-colorable if and only if there is a way to complete $A(\newgraph')$ over $\R$ so that $\rank(A(\newgraph'))$ is minimized and equals to three.  Note that Peeters' result applies to every field, but we only consider the field of real numbers for our purposes.

Completing $A(\newgraph')$ so that $\rank(A(\newgraph'))$ is minimized is similar to $(\Pc_3)$ with the following two differences:
\begin{enumerate}[(i)]
    \item $(\Pc_3)$ additionally requires the completion to be positive semidefinite.
    \item $(\Pc_3)$ does not provide a direct means to specify that the diagonal entries of the completion must be nonzero.
\end{enumerate}

For (i), our formulation is already consistent with Peeters'. Since in \cite{PeetersM.J.P1996ORoF}, when $\newgraph'$ is $3$-colorable, the rank-three completion $B$ is given as
$$B_{ij} := \begin{cases} 0 \text{ (given)}  & \text{if } \{i,j\} \in \newedge', \\
                             1 & \text{if } i = j, \\
                             0 & \text{if } i,j \text{ have different colors}, \\
                             1 & \text{if } i,j \text{ have the same color.} 
\end{cases}$$

Such a $B$ is actually positive semidefinite since there exists a permutation matrix $P$ such that $P$ maps $\{1,\dots,|\newvertix'|\}$ to $ J_1 \cup J_2 \cup J_3 $ where each $J_i$ is the set of vertices with the same color and the following holds:
\begin{equation}
P^{\top} B P = \begin{bmatrix}
T_1 & \\
& T_2 \\
&& T_3
\end{bmatrix},
\label{eq:tiblocks}
\end{equation}
where each $T_i \in \SS^{J_i} = \textbf{1} \textbf{1}^{\top}$.

Therefore, to establish the NP-hardness of $(\Pc_3)$ essentially requires modification of Peeters' construction to close gap (ii). Our results further extend Peeters' because we introduce perturbations in $(\Pch_3)$ and $(\Pct_3)$. Our proof takes advantage of the requirement that the completion be semidefinite.  This is because some of Peeters' original arguments (which do not rely on semidefiniteness) do not easily handle perturbations. 
\textcolor{blue}{Peeters' original argument says that testing whether a graph is 3-colorable can be reduced to testing whether a certain matrix has a rank-3 completion. It is possible that this reduction is not preserved in the presence of perturbation.
The underlying reason for this is that Schur complement (which highly associates with matrix rank) is unstable for adverse perturbation.  Consider the following example:}
\textcolor{blue}{\begin{align*}
A := \begin{bmatrix}
    0 & 0 & 1 & 1 & 0 \\
    0 & 0 & 1 & -1 & 0 \\
    1 & 1 & 0 & 0 & 1 \\
    1 & -1 & 0 & 0 & 1 \\
    0 & 0 & 1 & 1 & 0 \\
\end{bmatrix}
\end{align*}}
\textcolor{blue}{One sees that there does not exist any $\dv$ such that $A + \Diag(\dv)$ has rank two (without semidefiniteness constraint) since the first three columns of $A + \Diag(\dv)$, which are $\begin{bmatrix}
    d_1 \\ 0 \\ 1 \\ 1 \\ 0
\end{bmatrix},\begin{bmatrix}
    0 \\ d_2 \\ 1 \\ -1 \\ 0
\end{bmatrix},\begin{bmatrix}
    {1} \\ {1} \\ d_3 \\ 0 \\ 1
\end{bmatrix}$, are linearly independent regardless of the choice for $\dv$. However, if we allow some small perturbation $\Tilde{E}$, there exist some $\Tilde{\dv},\Tilde{E}$ such that \begin{align*}
    A + \Diag(\Tilde{\dv}) + \Tilde{E} = \begin{bmatrix}
    \epsilon & 0 & 1 & 1 & \epsilon \\
    0 & \epsilon & 1 & -1 & 0 \\
    1 & 1 & \frac{2}{\epsilon} & 0 & 1 \\
    1 & -1 & 0 & \frac{2}{\epsilon} & 1 \\
    \epsilon & 0 & 1 & 1 & \epsilon \\
\end{bmatrix}
\end{align*} and $A + \Diag(\Tilde{\dv}) + \Tilde{E}$ has rank two (the last three columns are in the span of the first two columns for every nonzero $\epsilon$). 
This illustrates an inherent difficulty in using Peeters' result directly. Our construction makes use of the semidefiniteness constraint in a more sophisticated way so that it is robust against small perturbations.}



\newcommand{\FirstGadget}{A}

\newcommand{\FirstOptSol}{L}
\newcommand{\FirstOptNoi}{H}

We now present the {\em $(\Pc_3)$ input construction.}  Starting from an input graph $\newgraph$, we first construct its Peeters supergraph $\newgraph'=(\newvertix',\newedge')$. 
Next, we construct $A \in \SS^{3|\newvertix'|}$ as follows:
\begin{equation}\FirstGadget  :=  I \otimes \begin{bmatrix}
1 & 1 & 1 \\
1 & 1 & 1 \\
1 & 1 & 1 
\end{bmatrix} - I,
\label{eq:3by3}
\end{equation}
where the first identity matrix is in $\SS^{\newvertix'}$ and the second identity matrix
is in $\SS^{3 |\newvertix'|}$.

The purpose of the Kronecker product is to address point (ii) above, that is, to force diagonal entries of the completion to be nonzero.
Note that every node $i$ of $\newgraph'$ is in correspondence with three rows and columns of $\FirstGadget$, say rows/columns numbered $i^{(1)},i^{(2)},i^{(3)}$. 
Define 
$$\mathcal{X}\subseteq \cE(\cK(\{1^{(1)},1^{(2)},1^{(3)},\ldots,|\newvertix(\newgraph')|^{(1)},|\newvertix(\newgraph')|^{(2)}, |\newvertix(\newgraph')|^{(3)}\}))$$ to contain the nine unordered pairs  of the form $\{i^{(\mu)},j^{(\nu)}\}$, $\mu,\nu=1,2,3$
for each edge $\{i,j\}\in \newedge(\newgraph')$.
In addition, $\mathcal{X}$ contains the three unordered pairs of the form $\{i^{(\mu)},i^{(\nu)}\}$ for each $i=1,\ldots,|\newvertix(\newgraph')|$ and for each $\mu,\nu=1,2,3$, $\mu\ne\nu$. 
 Thus, $\mathcal{X}$ contains a total of $9|\newedge(\newgraph')|+3|\newvertix(\newgraph')|$ edges.
 This concludes the specification of the input $(\FirstGadget,\mathcal{X})$ to $(\Pc_3)$.  

The main theorem regarding this construction
is as follows.
\begin{thm}
Given a graph $\newgraph$, let $(\FirstGadget,\mathcal{X})$ be the $(\Pc_3)$ input construction for $\newgraph$ described above.  Then \begin{enumerate}
    \item If $\newgraph$ is 3-colorable, there exists
    a choice of $\FirstOptSol$ fitting $\mathcal{X}$ such that $\rank(\FirstGadget+\FirstOptSol)\le 3$
    and $\FirstGadget+\FirstOptSol\succeq 0$.
    \item 
    Conversely, there exists a universal constant
    $\epsilon_0>0$ such that the following is true.  If there is a choice of $\FirstOptSol\in \mathbb{S}^{3|\newvertix(\newgraph')|}$,
    $\FirstOptNoi\in \mathbb{S}^{3|\newvertix(\newgraph')|}$ satisfying
    the following conditions:
    \begin{enumerate}
    \item
    $\FirstOptSol$ fits $\mathcal{X}$,
    \item
    $\Vert\FirstOptNoi\Vert_F\le \epsilon_0$,
    \item
    $\FirstGadget+\FirstOptSol+\FirstOptNoi\succeq 0$, and
    \item
    $\rank(\FirstGadget+\FirstOptSol+\FirstOptNoi)\le 3$,
    \end{enumerate}
    then $\newgraph$ is 3-colorable.
\end{enumerate}
\label{thm:tilde_p3_is_nphard}
\end{thm}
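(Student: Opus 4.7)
\emph{Proof proposal.}

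For the easy direction (part 1), given a 3-coloring $\chi:\newvertix(\newgraph')\to\{1,2,3\}$ of the Peeters supergraph (which exists iff $\newgraph$ is 3-colorable), set $u_i:=e_{\chi(i)}\in\R^3$ and let $W\in\R^{3|\newvertix(\newgraph')|\times 3}$ be the matrix with row $i^{(\mu)}:=u_i^\top$ for every $i$ and every $\mu\in\{1,2,3\}$. Let $M:=WW^\top$ and $L:=M-A$. Then $\rank(M)\le 3$, $M\succeq 0$; the within-block $3\times 3$ submatrices of $M$ are $\mathbf{1}\mathbf{1}^\top$ (matching $A=1$ on within-block off-diagonals, so $L$ vanishes there); and for each edge $\{i,j\}\in\newedge(\newgraph')$ we have $u_i^\top u_j=0$ (matching $A=0$ on cross-block positions, so $L$ vanishes there). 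Hence $L$ fits $\mathcal{X}$ and (1) holds.

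For the converse (part 2), write $M=A+L+H=WW^\top$ with $W\in\R^{3|\newvertix(\newgraph')|\times 3}$, denote by $W_i\in\R^{3\times 3}$ the submatrix of rows $i^{(1)},i^{(2)},i^{(3)}$, and $B_i:=W_iW_i^\top$. The hypotheses give: every off-diagonal entry of $B_i$ lies in $1\pm O(\epsilon_0)$, and every entry of $W_iW_j^\top$ is $O(\epsilon_0)$ for each edge $\{i,j\}\in\newedge(\newgraph')$. The first key step is to show $B_i$ is close to $\mathbf{1}\mathbf{1}^\top$. I use the fact that every vertex of the Peeters supergraph lies in a triangle (from the prism gadgets): for such a triangle $\{i,j,k\}$, the row spaces of $W_i,W_j,W_k$ inside $\R^3$ are pairwise approximately orthogonal (by the edge constraint), each has dimension at least one (since the within-block off-diagonals are close to $1$), and they sum to a subspace of dimension at most $3$; hence in the unperturbed limit each $W_t$ has rank exactly one, i.e., $W_t=c_tr_t^\top$ for some unit vector $r_t\in\R^3$ and $c_t\in\R^3$. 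The within-block off-diagonal constraints then force $c_{t,\mu}c_{t,\nu}\approx 1$, so $c_t\approx(1,1,1)^\top$ (WLOG positive sign), giving $B_t\approx\mathbf{1}\mathbf{1}^\top$. A Weyl/Davis--Kahan-type perturbation argument applied to the $9\times 9$ principal submatrix of $M$ indexed by the rows from $i,j,k$ upgrades this to $\|W_t-\mathbf{1}r_t^\top\|_F=O(\epsilon_0)$ in the perturbed case.

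The second key step is a robust version of Peeters' argument. The edge constraints now read $r_i^\top r_j=O(\epsilon_0)$ for all $\{i,j\}\in\newedge(\newgraph')$. Fix a base triangle $\{i_0,a_0,b_0\}\subseteq\newvertix(\newgraph')$ and rotate coordinates so that $r_{i_0},r_{a_0},r_{b_0}$ are within $O(\epsilon_0)$ of $e_1,e_2,e_3$. For each $j\in\newvertix(\newgraph)\setminus\{i_0\}$ apply the prism $N_{i_0,j}$: in the exact case, a finite analysis of the orthogonality constraints within the prism (as in Peeters) forces $r_j\in\{\pm r_{i_0}\}\cup r_{i_0}^\perp$; for two such vertices $j,j'\in\newvertix(\newgraph)$ with $r_j,r_{j'}\in r_{i_0}^\perp$, applying the prism $N_{j,j'}$ together with $r_{j'}\in r_{i_0}^\perp$ forces $r_{j'}\in\{\pm r_j,\pm s\}$, where $s$ is the unique unit direction orthogonal to both $r_{i_0}$ and $r_j$. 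Thus the vectors $r_j$ for $j\in\newvertix(\newgraph)$ take values in three mutually orthogonal directions (up to sign), defining a 3-coloring. In the perturbed case each equality becomes an $O(\epsilon_0)$-approximation; choosing $\epsilon_0$ smaller than half the minimum pairwise gap between the six points $\pm e_1,\pm e_2,\pm e_3$ yields an unambiguous assignment of each $r_j$ to one of three index classes, and edges give distinct classes since $r_i^\top r_j=O(\epsilon_0)\ll 1$ rules out same-class pairs.

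The main obstacle is this second step: converting Peeters' algebraic case analysis (which uses exact equalities and nonzero inequalities) into a robust form where each relation holds only up to $O(\epsilon_0)$, with the target directions still separated by an absolute constant. Fortunately, each $r_j$ for $j\in\newvertix(\newgraph)$ is derived from the base triangle via at most two prism applications, so perturbation errors remain $O(\epsilon_0)$ independently of $|\newvertix(\newgraph)|$, and a single absolute $\epsilon_0$ suffices.
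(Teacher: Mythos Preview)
Your outline is broadly on the right track and Part 1 matches the paper exactly. For Part 2, however, your argument diverges from the paper's in structure, and one of the steps contains a genuine circularity that you have not addressed.

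\textbf{The gap in your first key step.} You claim that a ``Weyl/Davis--Kahan-type perturbation argument applied to the $9\times 9$ principal submatrix'' upgrades the unperturbed rank-one conclusion to $\|W_t-\mathbf{1}r_t^\top\|_F=O(\epsilon_0)$. But Davis--Kahan compares two matrices whose difference is controlled, and here the $9\times 9$ submatrix differs from the target $\mathrm{blkdiag}(\mathbf{1}\mathbf{1}^\top,\mathbf{1}\mathbf{1}^\top,\mathbf{1}\mathbf{1}^\top)$ by $\Diag(d-\mathbf{1})+H'$, where the diagonal vector $d$ is a priori unconstrained. Until you bound $\|d-\mathbf{1}\|$ you cannot invoke eigenvector perturbation at all; yet bounding $d$ is precisely the conclusion you want. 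Your dimension-counting argument (``pairwise approximately orthogonal row spaces summing to dimension $\le 3$'') only shows the ranks are one in the exact limit; it does not, by itself, rule out e.g.\ $d_i=(D,1,1)$ with $D$ large, which gives a rank-$2$ PSD block $B_i$. One can close this by a further case analysis (if some $B_i$ has numerical rank $\ge 2$ then orthogonality forces $W_j$ and $W_k$ into a common line while themselves being nearly orthogonal, a contradiction), but that is real work you have not supplied. The paper handles this differently and more directly: its Lemma~3.4 picks a carefully chosen \emph{non-principal} $4\times 4$ submatrix whose Schur complement, under the rank-$3$ hypothesis, forces the unknown diagonal entry to satisfy $|x-1|\le\epsilon_2$ by an elementary $2\times 2$ computation. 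This avoids any circular appeal to perturbation theory.

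\textbf{The second key step: a different but valid route.} Once the diagonals are pinned down, both approaches reach the same intermediate fact, namely that for $i,j\in\mathcal{V}(\mathcal{G})$ the quantity $r_i^\top r_j$ (equivalently, the Gram entry) lies within $1/20$ of $\{-1,0,1\}$, by analysing the $6\times 6$ prism submatrix. Your extraction of a $3$-coloring from this via a base vertex $i_0$ and at most two prism applications per vertex is correct and keeps the error $O(\epsilon_0)$ uniformly; the paper instead defines $i\sim j$ iff the entry is near $\pm 1$, proves transitivity by a $3\times 3$ minor/eigenvalue check, and bounds the number of classes by $3$ via a $4\times 4$ identity-plus-noise argument. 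The paper's route is slightly cleaner because it is entirely local (each step looks at a fixed-size minor, no propagation through a distinguished base vertex), but yours is also sound once you supply the missing consistency check that the clustering within $r_{i_0}^\perp$ does not depend on the choice of reference $j$.
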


\begin{rem}
This theorem simultaneously shows that $(\Pc_3)$, $(\Pch_3)$, and $(\Pct_3)$ are NP-hard.  The specification of $A$ is exact; in other words, if the graph is 3-colorable, then the first part of the theorem says that input matrix $\FirstGadget$ can be completed without any noise term.  This means that ``$\epsilon$'' in the definition of $(\Pct_3)$  and $(\Pch_3)$ can be arbitrarily small.  For example,  the ``$\epsilon$''  specified in $(\Pct_3)$ make be taken as $\epsilon_0/p(n)$; then this theorem says that the solution to $(\Pct_3)$ determines whether a graph is 3-colorable. This will be the case for all reductions in this section; the need for a perturbation to the input occurs only in the reduction given in the appendix.
\label{rem:allthreecovered}
\end{rem}

\begin{rem}
We do not explicitly compute the universal constant $\epsilon_0$, but our upcoming analysis implies that it may taken as $10^{-12}$.  The true constant is larger because the constants in our analysis are not tight.

In the proof of the theorem, $\epsilon_1$, $\epsilon_2$, \ldots denote a sequence of parameters that are all of the form $k_1\epsilon_0^{k_2}$ for positive $k_1,k_2$.  The precise values of $k_1$ and $k_2$ can be deduced from the proofs.
\end{rem}

\begin{proof}
Part (1) of the theorem is
straightforward to prove: if $\newgraph$ is 3-colorable,
then so is $\newgraph'$ as already mentioned.  For each $\{i,j\}\in \cE(\cK(\newvertix(\newgraph')))\setminus \newedge(\newgraph')$,  let $\FirstOptSol(i^{(\mu)},j^{(\nu)})=1$ for all $\mu,\nu=1,2,3$ if $i,j$ are in the same color class, else let all of these entries be 0.  Set all diagonal entries of $\FirstOptSol$ to 1.  The other entries of $\FirstOptSol$, that is, off-diagonal entries corresponding to $\newedge(\newgraph')$ or off-diagonal entries of diagonal blocks, are necessarily 0 because of the requirement that $\FirstOptSol$ must fit $\mathcal{X}$. Then $\FirstGadget+\FirstOptSol$, after permutation,
has the same form as in \eqref{eq:tiblocks}
and therefore is positive semidefinite and of rank 3.

Part (2) of the theorem requires two lemmas.


\begin{lem}
\label{lemma_diageps}
If $\FirstOptSol$, $\FirstOptNoi$  satisfy conditions 2(a)--(d) of Theorem~\ref{thm:tilde_p3_is_nphard},
 then $|(\FirstGadget+\FirstOptSol+\FirstOptNoi)_{ii} - 1| \leq \epsilon_2$ for all $i \in \{1,2,\ldots,3|\newvertix(\newgraph')|\}$.
\end{lem}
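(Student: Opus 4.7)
The plan is to exploit the fact that every vertex of the Peeters supergraph $\newgraph'$ lies in a triangle of $\newgraph'$ (either $\{i,a_{ij},b_{ij}\}$ or $\{j,c_{ij},d_{ij}\}$ inside some gadget $N_{ij}$).  Given $i\in\cV(\newgraph')$, I would choose $j,k\in\cV(\newgraph')$ with $\{i,j\},\{i,k\},\{j,k\}\in\cE(\newgraph')$ and study the $9\times 9$ principal submatrix $N$ of $M:=\FirstGadget+\FirstOptSol+\FirstOptNoi$ indexed by the nine rows and columns belonging to $i,j,k$.  Since $N$ is a principal submatrix of $M$, we have $N\succeq 0$ and $\rank(N)\le 3$.

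Because the pairs $\{i,j\},\{i,k\},\{j,k\}$ all lie in $\cE(\newgraph')$ and each pair $\{i^{(\mu)},i^{(\nu)}\}$ with $\mu\ne\nu$ lies in $\mathcal{X}$, the requirement that $\FirstOptSol$ fits $\mathcal{X}$ forces $\FirstOptSol$ to vanish on every off-diagonal entry of $N$.  Combined with the structure of $\FirstGadget$, we may write $N=N_0+E$, where $N_0$ is the block-diagonal matrix whose blocks are
\[
B_i^0 \;:=\; \begin{pmatrix} d_{i^{(1)}} & 1 & 1 \\ 1 & d_{i^{(2)}} & 1 \\ 1 & 1 & d_{i^{(3)}} \end{pmatrix},\qquad d_{i^{(\mu)}}:=\FirstOptSol_{i^{(\mu)}i^{(\mu)}},
\]
together with the analogous $B_j^0,B_k^0$, and $E$ is the restriction of $\FirstOptNoi$ to the nine indices.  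Hence $\norm{E}_F\le \norm{\FirstOptNoi}_F\le \epsilon_0$, while $\norm{B_i^0}_F^2\ge 6$ irrespective of the $d_{i^{(\mu)}}$.

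The heart of the argument is an eigenvalue-counting step.  Weyl's inequality applied to $N_0=N-E$ ensures that each eigenvalue of $N_0$ differs from the corresponding one of $N$ by at most $\epsilon_0$; since $\rank(N)\le 3$ gives six zero eigenvalues of $N$, at least six eigenvalues of $N_0$ lie in $[-\epsilon_0,\epsilon_0]$.  The lower bound $\norm{B_i^0}_F^2\ge 6$ forces the top eigenvalue of each of the three diagonal blocks of $N_0$ to exceed $\sqrt 2$, which is larger than $\epsilon_0$ when $\epsilon_0$ is small.  Counting nine eigenvalues distributed across three blocks, with at most six small and at least one large per block, pins the distribution to exactly $(2,2,2)$: each block has one large eigenvalue (at least $\sqrt 2$ in magnitude) and two eigenvalues in $[-\epsilon_0,\epsilon_0]$.

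The remainder is first-order perturbation inside $B_i^0$.  The spectral decomposition gives $B_i^0=\lambda_1 v_1 v_1^\top+\lambda_2 v_2 v_2^\top+\lambda_3 v_3 v_3^\top$ with $\lambda_1\ge\sqrt 2$ and $|\lambda_2|,|\lambda_3|\le\epsilon_0$, so $B_i^0$ is within $\sqrt 2\,\epsilon_0$ in Frobenius norm of the rank-one matrix $\lambda_1 v_1 v_1^\top$.  Matching the exact off-diagonal value $1$ yields $\lambda_1(v_1)_\mu(v_1)_\nu=1+O(\epsilon_0)$ for $\mu\ne\nu$; taking ratios forces the three coordinates of $v_1$ to share a common sign and to be pairwise close, so each equals $\pm 1/\sqrt 3+O(\epsilon_0)$, $\lambda_1=3+O(\epsilon_0)$, and $d_{i^{(\mu)}}=\lambda_1(v_1)_\mu^2+O(\epsilon_0)=1+O(\epsilon_0)$.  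Adding the diagonal entry of $\FirstOptNoi$, which is bounded by $\epsilon_0$, yields $|(\FirstGadget+\FirstOptSol+\FirstOptNoi)_{i^{(\mu)}i^{(\mu)}}-1|\le \epsilon_2$ with $\epsilon_2$ a fixed multiple of $\epsilon_0$.  The main obstacle is the counting step: the triangle choice is essential because, without $\{j,k\}\in\cE(\newgraph')$, the unconstrained $\FirstOptSol$-entries in that off-diagonal block would make $E$ large and Weyl's inequality would fail to transfer the rank bound from $N$ to $N_0$.
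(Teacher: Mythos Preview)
Your argument is correct, and it reaches the same conclusion by a genuinely different route from the paper.

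The paper does not look at the full $9\times 9$ principal block.  Instead it picks a non-principal $4\times 4$ submatrix of $M$: rows $i^{(1)},i^{(2)},j^{(1)},k^{(1)}$ against columns $i^{(2)},i^{(3)},j^{(2)},k^{(3)}$, where $\{i,j,k\}$ is a triangle in $\newgraph'$.  This submatrix equals
\[
\begin{pmatrix} 1 & 1 & 0 & 0\\ x & 1 & 0 & 0\\ 0 & 0 & 1 & 0\\ 0 & 0 & 0 & 1\end{pmatrix}+H_1,
\]
with $x$ the unknown diagonal entry and $\Vert H_1\Vert_F\le\epsilon_0$.  Taking the Schur complement of the bottom $2\times 2$ block leaves a $2\times 2$ matrix that must have rank $\le 1$ (by the global rank-$3$ bound), and the linear dependence of its two rows immediately gives $x=\dfrac{(1+T_{11})(1+T_{22})}{1+T_{12}}$ for a perturbation $T$ with $\Vert T\Vert_F=O(\epsilon_0)$, hence $|x-1|\le\epsilon_2$.

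Both arguments exploit the same triangle structure, but the paper replaces your Weyl/eigenvalue-counting step by a Schur-complement rank identity on a hand-picked $4\times 4$ minor, which yields the constant $\epsilon_2=8\epsilon_0$ in one algebraic line.  Your approach is more symmetric and conceptually transparent (it reads off all three diagonal entries of the $i$-block at once from a rank-one approximation), at the cost of a slightly longer spectral perturbation calculation and a larger implied constant.  One small wording point: from $\Vert B_i^0\Vert_F^2\ge 6$ you only get that \emph{some} eigenvalue of each block has magnitude $\ge\sqrt 2$, not necessarily the top one; the counting still goes through, and positivity of $\lambda_1$ then follows a posteriori from $\lambda_1(v_1)_\mu(v_1)_\nu\approx 1$.
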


\begin{proof}
The diagonal entries are of $\FirstGadget+\FirstOptSol+\FirstOptNoi$ are in correspondence with $i^{(\mu)}$ as $i$ ranges over $\newvertix(\newgraph')$ and $\mu\in\{1,2,3\}$.  Fix a particular $i$, and let us analyze the entry corresponding to $i^{(2)}$ (the other cases are symmetric).
There always exists a clique in $\newgraph'$ of size three that contains $i$ thanks to the Peeters supergraph construction. Denote vertices in this clique by $i,j,k$. Consider the $4$-by-$4$ submatrix of $\FirstGadget+\FirstOptSol+\FirstOptNoi$ indexed by rows $i^{(1)},i^{(2)},j^{(1)},k^{(1)}$ and columns $i^{(2)},i^{(3)},j^{(2)},k^{(3)}$. Denote this submatrix by $B_1 + H_1$ where 
$$B_1 := \begin{bmatrix}
    1 & 1 & 0 & 0 \\
    x & 1 & 0 & 0 \\
    0 & 0 & 1 & 0 \\
    0 & 0 & 0 & 1 
\end{bmatrix},H_1 := \begin{bmatrix}
    H_1^{(1)} & H_1^{(2)} \\
    H_1^{(3)} & H_1^{(4)}
\end{bmatrix}, H_1^{(i)} \in \R^{2 \times 2}, \norm{H_1^{(i)}}_F \leq \epsilon_0.$$
The scalar $x$ in the above formulation is a diagonal entry and therefore corresponds to $\FirstOptSol(i^{(2)},i^{(2)})+\FirstOptNoi(i^{(2)},i^{(2)})$ since the diagonal entries of $\FirstGadget$ are all 0's.  Without loss of generality, we may assume that $x=\FirstOptSol(i^{(2)},i^{(2)})$ and $\FirstOptNoi(i^{(2)},i^{(2)})=0$. We can further assume that $x>0$ since $\FirstGadget+\FirstOptSol+\FirstOptNoi\succeq 0$ and row $i^{(2)}$ is not all zeros.

 Compute the Schur complement of the lower right $2$-by-$2$ block of $B_1+H_1$:
 $$S := \begin{bmatrix}
     1 & 1 \\
     x & 1
 \end{bmatrix} + \underbrace{H_1^{(1)} - H_1^{(2)} \left(I + H_1^{(4)}\right)^{-1} H_1^{(3)}}_{:= T}.$$
  By triangle inequality and the fact that $\norm{\cdot}_F$ is submultiplicative, it follows
that
 \begin{equation}
 \norm{T}_F \leq \norm{H_1^{(1)  }}_F + \norm{H_1^{(2)  }}_F \norm{H_1^{(3)}}_F \norm{ \left(I + H_1^{(4) }\right)^{-1}}_F \leq \epsilon_0 + \epsilon_0^2 \norm{ \left(I + H_1^{(4) }\right)^{-1}}_F. 
 \end{equation}
 
Since $\norm{H_1^{(4)}}_F < 1$,  express $(I + H_1^{(4) } )^{-1} = I +  \sum\limits_{i=1}^{\infty} \left(- H_1^{(4) }\right)^{i}$ and by triangle inequality again,
 \begin{equation}
 \norm{T}_F \leq \epsilon_0 + \epsilon_0^2 \left(2 + \sum\limits_{i=1}^\infty \epsilon_0^i\right) \leq \epsilon_0 + \epsilon_0^2 \left(2 + \frac{\epsilon_0}{1 - \epsilon_0}\right) \leq \epsilon_1,
 \end{equation}
where $\epsilon_1=2\epsilon_0$, assuming $\epsilon_0$ is sufficiently small (as we have assumed).

If $\rank(A + \FirstOptSol + \FirstOptNoi) \leq 3$, then by Lemma~\ref{lemma_schur} it is necessary that $\rank(S) = 1$ and therefore the first row of $S$ is linearly dependent on the second row of $S$ and it follows that 
\begin{align*}
    \dfrac{1 + T_{11}}{ x } = \dfrac{1 + T_{12}}{ 1 + T_{22}}  & \implies x = \dfrac{ (1+T_{11}) (1+ T_{22}) }{1 + T_{12}} \\
    & \implies \dfrac{(1-\epsilon_1)^2}{(1+\epsilon_1)} \leq x \leq \dfrac{(1+\epsilon_1)^2}{(1-\epsilon_1)} \\
    & \implies 1 - \epsilon_2 \leq x \leq   1 + \epsilon_2,
\end{align*}
where $\epsilon_2=4\epsilon_1$, assuming $\epsilon_1$ is sufficiently small.
\end{proof}

\begin{lem}
\label{lemma_one_zero_minusone}
Assume $\FirstOptSol,\FirstOptNoi$ satisfy conditions 2(a)--(d) of Theorem~\ref{thm:tilde_p3_is_nphard}.
Suppose $\{i,j\}\in \cE(\cK(\newvertix(\newgraph)))$ and $\mu,\nu\in\{1,2,3\}$.   Let $t :=(\FirstGadget+\FirstOptSol+\FirstOptNoi)_{i^{(\mu)},j^{(\nu)}}$. 
Then

(a) One of the following inequalities must hold:
\begin{itemize}
    \item 
$|t- 1| \leq 1/20$,
\item $|t+1| \le 1/20$, or
\item 
 $|t| \leq 1/20$.
\end{itemize}

(b) Assume further that
$\{i,j\}\in \newedge(\newgraph)$. Then the third inequality of part (a) holds.
\end{lem}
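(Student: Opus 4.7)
For part (b), the conclusion is essentially free: since $\{i,j\}\in\newedge(\newgraph)\subseteq\newedge(\newgraph')$, each of the nine pairs $\{i^{(\mu)},j^{(\nu)}\}$ belongs to $\mathcal{X}$ by construction, so $\FirstOptSol_{i^{(\mu)},j^{(\nu)}}=0$. Combined with $\FirstGadget_{i^{(\mu)},j^{(\nu)}}=0$ (the off-diagonal blocks of $\FirstGadget$ vanish because $i\ne j$), we obtain $t=\FirstOptNoi_{i^{(\mu)},j^{(\nu)}}$, and hence $|t|\le \Vert\FirstOptNoi\Vert_F\le\epsilon_0\le 1/20$ provided $\epsilon_0$ is a small enough absolute constant.

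For part (a), the plan is to pass from the matrix $\FirstGadget+\FirstOptSol+\FirstOptNoi$ to an approximate orthogonal representation of $\newgraph'$ in $\R^3$. Since $\FirstGadget+\FirstOptSol+\FirstOptNoi\succeq 0$ has rank at most three, I would factor $\FirstGadget+\FirstOptSol+\FirstOptNoi=UU^\top$ with $U\in\R^{3|\newvertix(\newgraph')|\times 3}$ and write $u_{k^{(\mu)}}$ for the row of $U$ indexed by $k^{(\mu)}$. Lemma~\ref{lemma_diageps} gives $\bigl|\Vert u_{k^{(\mu)}}\Vert^2-1\bigr|\le\epsilon_2$. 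Within each $3\times 3$ diagonal block, the construction places $\FirstGadget_{k^{(\mu)},k^{(\nu)}}=1$ and $\mathcal{X}$ forces $\FirstOptSol_{k^{(\mu)},k^{(\nu)}}=0$ for $\mu\ne\nu$, so $\langle u_{k^{(\mu)}},u_{k^{(\nu)}}\rangle=1+\FirstOptNoi_{k^{(\mu)},k^{(\nu)}}$ lies within $\epsilon_0$ of $1$. Together with the near-unit norms, Cauchy--Schwarz forces the three rows $u_{k^{(1)}},u_{k^{(2)}},u_{k^{(3)}}$ to be within $O(\sqrt{\epsilon_0})$ of a common vector which I denote by $u_k\in\R^3$. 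The same computation applied to any edge $\{k,l\}\in\newedge(\newgraph')$ shows $\langle u_k,u_l\rangle=O(\epsilon_0)$. Moreover $t=\langle u_{i^{(\mu)}},u_{j^{(\nu)}}\rangle$ is within $O(\sqrt{\epsilon_0})$ of $\langle u_i,u_j\rangle$, so it suffices to show the latter is close to one of $-1,0,1$.

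Next, I would invoke the prism gadget $N_{ij}$. The triangle $\{i,a_{ij},b_{ij}\}$ consists of three pairwise-adjacent vertices of $\newgraph'$, so $(u_i,u_{a_{ij}},u_{b_{ij}})$ is an approximate orthonormal basis of $\R^3$, and likewise $(u_j,u_{c_{ij}},u_{d_{ij}})$. Expanding the first basis in the second yields coefficients $u_i=\alpha_i u_j+\beta_i u_{c_{ij}}+\gamma_i u_{d_{ij}}$, and analogous expressions $u_{a_{ij}}=\alpha_a u_j+\beta_a u_{c_{ij}}+\gamma_a u_{d_{ij}}$ and $u_{b_{ij}}=\alpha_b u_j+\beta_b u_{c_{ij}}+\gamma_b u_{d_{ij}}$. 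The three cross-edges $\{i,d_{ij}\}$, $\{a_{ij},j\}$, $\{b_{ij},c_{ij}\}$ each kill one coefficient ($\gamma_i\approx 0$, $\alpha_a\approx 0$, $\beta_b\approx 0$), and the three remaining orthogonalities among $\{u_i,u_{a_{ij}},u_{b_{ij}}\}$ produce the approximate product identities $\beta_i\beta_a\approx 0$, $\alpha_i\alpha_b\approx 0$, $\gamma_a\gamma_b\approx 0$. A short case analysis, driven by whether $\alpha_i\approx 0$ or not and exploiting $\alpha_i^2+\beta_i^2\approx 1$ and its analogues, then forces either $\alpha_i\approx 0$ (yielding $\langle u_i,u_j\rangle\approx 0$) or $\alpha_i^2\approx 1$ (yielding $\langle u_i,u_j\rangle\approx\pm 1$).

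The hard part will be the quantitative bookkeeping: errors of size $\epsilon_0$ (or $\sqrt{\epsilon_0}$ introduced by Cauchy--Schwarz) accumulate through all the algebraic manipulations above, and to land within $1/20$ of a value in $\{-1,0,1\}$ I would carefully chain triangle inequalities and submultiplicativity bounds (as in Lemma~\ref{lem_bound_of_inverse_of_sum}), together with the fact that a small perturbation of an orthonormal basis retains bounded inverse, so the expansion coefficients $\alpha_\bullet,\beta_\bullet,\gamma_\bullet$ are within $O(\epsilon_0)$ of the raw inner products $\langle u_\bullet,u_j\rangle$ etc. Taking $\epsilon_0$ to be a sufficiently small absolute constant (consistent with the suggestion $\epsilon_0=10^{-12}$ in the preceding remark) then yields the desired bound $1/20$.
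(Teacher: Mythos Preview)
Your proof of part (b) is identical to the paper's. For part (a) your approach is correct but genuinely different from the paper's, and worth contrasting.

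The paper never factors $A+L+H$. Instead it isolates the $6\times 6$ principal submatrix indexed by $i^{(\mu)},a_{ij}^{(\mu)},b_{ij}^{(\mu)},d_{ij}^{(\mu)},j^{(\nu)},c_{ij}^{(\mu)}$, writes it as an explicit matrix $B_3$ (with the unknown off-diagonal entries $u,v,w,x,y,z$) plus a small perturbation, and then argues purely with eigenvalue perturbation and determinants. First, several $4\times 4$ principal submatrices of $B_3$ must be singular (rank at most $3$), which forces each of the six sums $u^2+v^2$, $u^2+z^2$, $v^2+x^2$, $w^2+x^2$, $w^2+y^2$, $y^2+z^2$ to be within $O(\epsilon_0)$ of $1$. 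Then the determinant of the $4\times 4$ submatrix indexed by $1,2,5,6$ is computed in closed form as $(1-u^2)(1-v^2-x^2)-u^2v^2$; since this determinant is $O(\epsilon_0)$ and $1-v^2-x^2$ is already $O(\epsilon_0)$, one gets $u^2v^2=O(\epsilon_0)$, and a short dichotomy finishes.

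Your route through the Gram factorization $A+L+H=UU^\top$ and the approximate orthonormal bases $(u_i,u_{a_{ij}},u_{b_{ij}})$, $(u_j,u_{c_{ij}},u_{d_{ij}})$ is the natural geometric reading of Peeters' construction and is equally valid. The trade-offs: the paper's submatrix argument keeps all errors at size $O(\epsilon_0)$ until the very last step (where a single fourth root appears), whereas your Cauchy--Schwarz step immediately introduces $O(\sqrt{\epsilon_0})$ errors that then propagate through the change-of-basis and the product identities $\beta_i\beta_a\approx 0$, etc. Both end at $O(\epsilon_0^{1/4})$, so the final constants are comparable; the paper's computation is just a bit tighter along the way. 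One small slip to fix: after passing to the representatives $u_k$, the edge inner products $\langle u_k,u_l\rangle$ are $O(\sqrt{\epsilon_0})$, not $O(\epsilon_0)$ as you wrote (you correctly account for this later). The quantitative bookkeeping you flag as ``the hard part'' is indeed routine once you fix a threshold (say $\epsilon_0^{1/4}$) for the dichotomy ``$\alpha_i\approx 0$ or not.''
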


\begin{rem}
Note that the pair of vertices $i,j$ in this lemma 
are assumed to be
in the original vertex set $\newvertix$, i.e., they are not internal vertices of the Peeters gadget $N_{ij}$.
\end{rem}

\begin{proof}
Before turning to part (a), we quickly dispense with part (b).  Part (b) follows since $\FirstOptSol(i^{(\mu)},j^{(\nu)})=0$ if $\{i,j\}\in \newedge(\newgraph)\subseteq \newedge(\newgraph')$ and $\FirstOptSol$ fits $\mathcal{X}$. In addition, $\FirstGadget(i^{(\mu)},j^{(\nu)})=0$ in this case, also by construction.  Therefore, $|t|=|\FirstOptNoi(i^{(\mu)},j^{(\nu)})|$, and this number is at most $\epsilon_0$ hence also at most $1/20$.

Now, turning to part (a), consider the $6\times 6$ principal submatrix in $\FirstGadget + \FirstOptSol + \FirstOptNoi$ indexed by $i^{(\mu)}$, $a_{ij}^{(\mu)}$, $b_{ij}^{(\mu)}$, $d_{ij}^{(\mu)}$ $j^{(\nu)}$, $c_{ij}^{(\mu)}$. By Lemma \ref{lemma_diageps}, this submatrix has the form of $B_3 + H_3$ where
$$ B_3 := \begin{bmatrix}
       1 & 0 & 0 & 0 & u & v \\
       0 & 1 & 0 & w & 0 & x \\
       0 & 0 & 1 & y & z & 0 \\
       0 & w & y & 1 & 0 & 0 \\
       u & 0 & z & 0 & 1 & 0 \\
       v & x & 0 & 0 & 0 & 1 \\
\end{bmatrix},$$
$\norm{H_3}_F \leq 6 \epsilon_2$, $B_3 + H_3 \succeq 0$, $u,v,w,x,y,z \in \R.$
In this submatrix, $t=u+H_3(5,1)$. 
We will show that $u$ is close to one of $-1$, $0$, or $1$; in this case the result will follow since $|t-u|\le 6\epsilon_2$.

Before investigating this $6\times 6$ matrix, consider one of its $4\times 4$ submatrices
$$B_4 := B_3(1:4,1:4) =  \begin{bmatrix}
        1 & 0 & 0 & 0 \\
        0 & 1 & 0 & w \\
        0 &0 & 1 & y \\
        0 & w & y & 1 \\
\end{bmatrix}.$$ 
Note that $B_4$'s eigenvalues are $1$, $1$, $1 - (w^2 + y^2)^{1/2}$, $1+(w^2 + y^2)^{1/2}$. Since $(B_3 + H_3)({1:4},{1:4})$ is positive semidefinite and singular, it is necessary that $|1 - (w^2 + y^2)^{1/2}| \leq \norm{H_3}_F\leq 6\epsilon_2$  (see \cite[Cor.~8.1.6]{GolubVanLoan}). Assuming $\epsilon_2$ is sufficiently small, this implies, first, that $w^2+y^2 \leq 2,$ hence $w^2 \leq 2, y^2 \leq 2$. Furthermore, $ 1 - (w^2 + y^2)^{1/2} = \dfrac{1 - (w^2 + y^2)}{1 + (w^2 + y^2)^{1/2}}$, then $|1 - (w^2 + y^2)| \leq 6 \epsilon_2 (1+ (w^2 + y^2)^{1/2}) \leq 15 \epsilon_2$.
A similar analysis can be applied to $(w,y)$, $(u,z)$, $(v,x)$, $(u,v)$, $(w,x)$ and $(y,z)$. 

Next, consider the $4\times 4$ principal submatrix of $B_3 + H_3$ indexed by $1,2,5,6$. 
This submatrix can be written in the form $B_5+H_5$ where
$$B_5:=\left(\begin{array}{cccc}
1 & 0 & u & v \\
0 & 1 & 0 & x \\
u & 0 & 1 & 0 \\
v & x & 0 & 1
\end{array}
\right),$$
and $\Vert H_5\Vert_F\le 6\epsilon_2$.  Let the eigenvalues of $B_5$ be denoted $\lambda_1,\ldots,\lambda_4$.  Since $B_5+H_5$ is singular by the rank-3 assumption, it follows that one of the eigenvalues, say $\lambda_4$, must satisfy $|\lambda_4|\le \Vert H_5\Vert_F$. On the other hand, all four of them satisfy $|\lambda_i|\le \Vert B_5\Vert_F$.  Since $u^2 + v^2 \le 2$ and $v^2 + x^2 \le 2$, it follows that $\Vert B_5\Vert_F\le 3$.  Thus, we have the following chain of inequalities:
\begin{align*}
    |\det(B_5)| & = |\lambda_1\lambda_2\lambda_3\lambda_4| \\
    & \le 3^3\cdot(6\epsilon_2)= 162\epsilon_2.
\end{align*}
We can write $|\det(B_5)|$ in closed form:
$$|\det(B_5)|=|(1-u^2)(1-v^2-x^2)-u^2v^2|.$$
We have shown that $|1-u^2|\le 1$, while $|1-v^2-x^2|\le 15\epsilon_2$.
Thus,
\begin{align*}
    162\epsilon_2 &\ge |\det(B_5)| \\
    &=|(1-u^2)(1-v^2-x^2)-u^2v^2| \\
    &\ge u^2v^2 - 15\epsilon_2.
\end{align*}
Thus, we have arrived at the inequality $u^2v^2\le \epsilon_3$, where $\epsilon_3 := 177\epsilon_2$.
This means that either $u^2\le \sqrt{\epsilon_3}$ or $v^2\le \sqrt{\epsilon_3}$.   In other words, either $|u|\le \epsilon_3^{1/4}$ or $|v|\le \epsilon_3^{1/4}$.

In the first case, we have established an upper bound on $|u|$, thus also establishing an upper bound of $\epsilon_3^{1/4}+6\epsilon_2$ on $|t|$.  This shows that the third case of part (a) of the lemma holds, assuming that $\epsilon_0$ is chosen sufficiently small so that $\epsilon_3^{1/4}+6\epsilon_2\le 1/20.$  In the case that $|v|\le \epsilon_3^{1/4}$, we know that $|1-u^2-v^2|\le 15\epsilon_2$, which implies that $|1-u^2|\le 15\epsilon_2 + |v|^2\le 15\epsilon_2+\epsilon_3^{1/2}.$
In other words, $|1-u|\cdot|1+u|\le 15\epsilon_2+\epsilon_3^{1/2}$, which means that either $|1-u|$ or $|1+u|$ is bounded above by $(15\epsilon_2+\epsilon_3^{1/2})^{1/2}$. This implies that the first or second case of part (a) holds, assuming that $(15\epsilon_2+\epsilon_3^{1/2})^{1/2}\le 1/20$.  This is assured by a sufficiently small choice of $\epsilon_0$. 
\end{proof}

{\em Conclusion of proof of Theorem~\ref{thm:tilde_p3_is_nphard}.}
We have already showed that if the graph $\newgraph$ is 3-colorable, then there exists a choice of $\FirstOptSol$ fitting $\mathcal{X}$ such that $\rank(\FirstGadget+\FirstOptSol)\le 3$, $\FirstGadget+\FirstOptSol\succeq 0$.  So to complete the proof, we need to show the opposite direction, namely, if there exists a completion satisfying conditions 2(a)--(d) of the theorem, then $\newgraph$ is 3-colorable.  Therefore, assume $\FirstOptSol,\FirstOptNoi$ exist that satisfy these conditions.  For two nodes $i,j\in \newvertix(\newgraph)$, write $i\sim j$ if $i=j$ or if
the first or second condition of part (a) of Lemma~\ref{lemma_one_zero_minusone} apply to 
$t=(\FirstGadget + \FirstOptSol+\FirstOptNoi)_{i^{(1)},j^{(1)}}$
(i.e., $t\approx \pm 1$ rather than $t\approx 0$).
Clearly, `$\sim$' defined in this manner is reflexive and symmetric.  We claim also that it is transitive.  Suppose, e.g., that $i\sim j$ and $j\sim k$; we wish to prove that $i\sim k$.  Assume $i,j,k$ are distinct, else the result is obvious.  By the lemma, the $(i^{(1)},j^{(1)},k^{(1)})$ principal submatrix of $\FirstGadget + \FirstOptSol+\FirstOptNoi$ may be written as  $B_6+H_6$, where $$B_6=\left(\begin{array}{ccc}
1 & x & z \\
x & 1 & y \\
z & y & 1
\end{array}
\right),$$
and $\Vert H_6\Vert_F \le 3/20$. Here, $x=\pm 1$, $y=\pm 1$, and $z\in\{-1,0,1\}$ since $x+H_6(2,1)$ is the quantity denoted as $t$ in the lemma associated with $\{i^{(1)},j^{(1)}\}$, $y$ is associated with $\{j^{(1)},k^{(1)}\}$, and $z$ is associated with $\{i^{(1)},k^{(1)}\}$.  However, we claim that if $z=0$, then $B_6+H_6$ cannot be semidefinite, contradicting the assumption.  To see this, suppose $z=0$, and observe that the three eigenvalues of $B_6$ are $1-\sqrt{2},1+\sqrt{2},1$ (irrespective of the choice of signs for $x$ and $y$).  On the other hand,
$\Vert H_6\Vert_F\le 3/20 <\sqrt{2}-1$.  Therefore, $B_6+H_6\not\succeq 0$ in the case $z=0$.  Thus, we conclude that $z=\pm 1$, implying that $i\sim k$.

Thus, `$\sim$' is an equivalence relationship, and therefore it partitions the nodes of $\newvertix(\newgraph)$ into $m$ equivalence classes $\newvertix_1,\ldots,\newvertix_m$ for some $m\ge 1$.  Observe that no edge can join two distinct nodes $i,j$ in the same class $\newvertix_r$ since $i\sim j$
implies $(\FirstGadget + \FirstOptSol+\FirstOptNoi)_{i^{(1)},j^{(1)}}\approx \pm 1$, but this is precluded by part (b) of the lemma if $\{i,j\}\in \newedge(\newgraph)$.  Thus, $\newvertix_1,\ldots,\newvertix_m$ define a valid $m$-coloring of $\newgraph$.  
The last claim to prove, therefore, is that $m\le 3$.  Suppose not; let $i_1,\ldots,i_4$ lie in $\newvertix_1,\ldots,\newvertix_4$ respectively.  Consider the principal submatrix of
$\FirstGadget + \FirstOptSol+\FirstOptNoi$ indexed by $(i_1^{(1)},\ldots,i_4^{(1)})$, which has the form $I+H_7$, where $I$ denotes the $4\times 4$ identity matrix
and $\Vert H_7\Vert_F \le 4/20$. The first term can be written as $I$ because all the off-diagonal entries of the $(i_1^{(1)},\ldots,i_4^{(4)})$-principal submatrix of $\FirstGadget + \FirstOptSol+\FirstOptNoi$ are close to zero by the hypotheses that $i_1\not \sim i_2$, $i_1\not\sim i_3$, etc., meaning that the third case of part (a) of the lemma applies to the off-diagonal entries.
Since $\Vert H_7\Vert_F<1$, it follows that $I+H_7$ is nonsingular and hence is of rank 4, contradicting the assumption.  This concludes the proof of theorem.
\end{proof}

We next turn to the NP-hardness of $(\Pc_1)$, $(\Pch_1)$, and $(\Pct_1)$.
In this paragraph we present the {\em $(\Pc_1)$ input construction.}
Let $\newgraph$ be an undirected graph, and let $(\FirstGadget_0,\mathcal{X})$ be the $(\Pc_3)$ input construction
described in the paragraphs preceding Theorem \ref{thm:tilde_p3_is_nphard}.  Recall that 
$\FirstGadget_0\in\SS^n$, where we define $n:=3|\newvertix(\newgraph')|$ and where $\newgraph'$ is the Peeters supergraph of $\newgraph$.  Recall also that each vertex $i\in \newvertix(\newgraph')$ corresponds to three rows/columns, say $i^{(1)},i^{(2)}, i^{(3)}$ of $\FirstGadget_0$.  Let $i$ be called the {\em parent} of these three rows/columns.  We denote the parent mapping by $\pi(\cdot)$, i.e., $\pi(i^{(\mu)})=i$ for $\mu=1,2,3$.
Recall that $(i'',j'')\in \mathcal{X}$ means either that $(\pi(i''),\pi(j''))$ is an edge in $\newgraph'$ or that $\pi(i'')=\pi(j'')$ but $i''\ne j''$.

We call a pair $\{i',j'\}\in \cE(\cK(\newvertix(\newgraph'))\setminus \newedge(\newgraph'))$ a {\em nonedge} of $\newgraph'$.
Let $A\in\SS^n$ be defined as
$$A(i'',j'')=\left\{\begin{array}{ll}
2 & \mbox{if $\{\pi(i''),\pi(j'')\}$ is a nonedge of $\newgraph'$}, \\
0 & \mbox{if $\{\pi(i''), \pi(j'')\}\in \newedge(\newgraph')$}, \\
1 & \mbox{if $\pi(i'')=\pi(j'')$ and $i''\ne j''$}, \\
0 & \mbox{if $i''=j''$.}
\end{array}\right.$$
Thus, $A$ has nearly the same definition as $A_0$ except that its entries corresponding to nonedges are 2's instead of 0's.

The nonedges of $\newgraph'$ each correspond to nine symmetric pairs of 2's in $A$.  These nine pairs of 2's implicitly define an undirected graph whose nodes are the rows/columns of $\FirstGadget$ and whose edges correspond to its unspecified entries.  Call this graph $\newgraph''$, so that $|\newvertix(\newgraph'')|=n=3|\newvertix(\newgraph')|$, and let $m:=|\newedge(\newgraph'')|=9
\left(\choose{|\newvertix(\newgraph')|}{2}-|\newedge(\newgraph')|\right)$.
Let $K \in \{0,1\}^{n \times m}$ be the node-edge incidence matrix of $\newgraph''$.   Finally,
define $B\in\SS^{m+n}$ by 
\begin{equation}
    B := \begin{bmatrix}
    0 & K^{\top} \\
    K &  A
\end{bmatrix} + k  I 
\label{eq:bdef}
\end{equation}
where 
\begin{equation}
    k:= 2 \max\limits_{v \in \newvertix(\newgraph'')} \{ \text{degree}(v) \} + 1.
    \label{eq:kdef}
\end{equation}

\begin{thm}
\label{thm_P1_tilda}
Let $B$ be constructed from input graph $\newgraph$ using the $(\Pc_1)$ input construction in the preceding paragraphs.  Then
\begin{itemize}
    \item If $\newgraph$ is 3-colorable, there exists $\dv\in\R_+^{n+m}$ such that $B-\Diag(\dv)\succeq 0$ and $\rank(B-\Diag(\dv))\le m+3$.
    \item
    Conversely,
    if there exists $\dv \in \R_+^{n + m}$ and $H \in \SS^{n+m}$ such that $\norm{H}_F \leq \epsilon_5$, $B-\Diag(\dv)+H\succeq 0$, and $\rank(B-\Diag(\dv)+H)\le m+3$, then $\newgraph$ is 3-colorable.
\end{itemize}
\end{thm}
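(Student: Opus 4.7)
My plan is to reduce the already-established NP-hardness of Theorem~\ref{thm:tilde_p3_is_nphard} to $(\Pc_1)$ and its variants by Schur complementing $B$ against its top-left $m\times m$ block. The point is that the diagonal perturbation $\dv_1$ on that block lets us prescribe an arbitrary diagonal $D = kI - \Diag(\dv_1)\succ 0$, and then $KD^{-1}K^\top$ has support exactly on the off-diagonal positions of $A$ that equal $2$ (the nonedges of $\newgraph'$, i.e.\ edges of $\newgraph''$). Choosing $D$ appropriately cancels the $2$'s in $A$ and replaces them with arbitrary values $L_{ij}$, while $\dv_2$ absorbs the diagonal. The Schur complement thus becomes $A_0 + L$ with $L$ fitting $\mathcal{X}$, bringing us exactly into the setting of Theorem~\ref{thm:tilde_p3_is_nphard}.

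\textbf{Forward direction.} Invoke Theorem~\ref{thm:tilde_p3_is_nphard}(1) to obtain $L$ fitting $\mathcal{X}$ with $A_0+L\succeq 0$ and $\rank(A_0+L)\le 3$; by the explicit construction there, $L$ has entries in $\{0,1\}$ at nonedge positions and $L_{ii}=1$. Set $(\dv_1)_e := k - 1/(2-L_{ij})$ for each edge $e=\{i,j\}$ of $\newgraph''$, so that $D_{ee}\in\{1/2,1\}$, and set $(\dv_2)_i := k - 1 - (KD^{-1}K^\top)_{ii}$. Nonnegativity of $\dv_1$ is immediate from $k\ge 1$, and nonnegativity of $\dv_2$ follows from $(KD^{-1}K^\top)_{ii}=\sum_{e\ni i}(2-L_e)\le 2\deg_{\newgraph''}(i)$ together with the choice $k = 2\max_v \deg(v)+1$ in \eqref{eq:kdef}. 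An entrywise check shows the Schur complement of the top block equals $A_0+L$ exactly, so Lemma~\ref{lemma_schur} yields $B-\Diag(\dv)\succeq 0$ with rank $m+3$.

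\textbf{Converse direction.} Put $M := B - \Diag(\dv) + H$ and partition it into blocks $M_{11}, M_{12}, M_{22}$. The first task is to show that $M_{11} = D + H_{11}$, with $D := kI - \Diag(\dv_1)$, is well-conditioned. I examine, for each edge $e=\{i,j\}$ of $\newgraph''$, the PSD $2\times 2$ principal minor of $M$ indexed by $e$ and $i$: its off-diagonal entry is $K_{ie} + (H_{12})_{e,i} = 1 + O(\epsilon_5)$, while $M_{22}(i,i)\le k + O(\epsilon_5)$ because $(\dv_2)_i\ge 0$. The PSD inequality then forces $D_{ee}\ge 1/(2k)$ for $\epsilon_5$ small enough, and hence $M_{11}\succ 0$ with $\|D^{-1}\|_F = O(k\sqrt{m})$. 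Applying Lemma~\ref{lem:KDK_lemma} (with $K^\top$, $D$, $H_{11}$, $H_{12}$ playing the roles of $K$, $D$, $\Delta$, $H$) bounds $\|M_{12}^\top M_{11}^{-1} M_{12} - KD^{-1}K^\top\|_F = O(\mathrm{poly}(n,k)\,\epsilon_5)$. Define $L$ by $L_{ij} := 2 - 1/D_e$ at each nonedge position $\{i,j\}$ of $\newgraph'$ and zero elsewhere; then $L$ fits $\mathcal{X}$ by construction, the diagonal is absorbed into $\dv_2$, and the Schur complement of $M_{11}$ equals $A_0 + L + H'$ with $\|H'\|_F = O(\mathrm{poly}(n,k)\,\epsilon_5)$. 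By Lemma~\ref{lemma_schur} this Schur complement is PSD of rank at most $\rank(M)-m \le 3$. Choosing $\epsilon_5$ inverse-polynomial in $n$ and $k$ so that $\|H'\|_F \le \epsilon_0$ (from Theorem~\ref{thm:tilde_p3_is_nphard}(2)), we conclude that $\newgraph$ is 3-colorable.

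\textbf{Main obstacle.} The central technical difficulty is the lower bound $D_{ee}\ge 1/(2k)$ in the converse direction. An adversarial $\dv_1$ with some $(\dv_1)_e$ close to $k$ would make $M_{11}$ nearly singular and cause the Schur complement to blow up; the reason this cannot happen is the PSD constraint on the small principal minors of $M$ combined with the large choice of $k$ in \eqref{eq:kdef}. Making this bound quantitative is what propagates through Lemma~\ref{lem:KDK_lemma} and dictates the inverse-polynomial scaling of $\epsilon_5$.
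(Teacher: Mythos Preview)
Your proposal is correct and follows essentially the same route as the paper's proof: both directions hinge on Schur-complementing the leading $m\times m$ block so that $KD^{-1}K^\top$ cancels the $2$'s in $A$ and produces $A_0+L$ with $L$ fitting $\mathcal{X}$, with the forward direction picking $D_{ee}\in\{1/2,1\}$ according to color classes and the converse using the $2\times 2$ PSD minor (together with $(\dv_2)_i\ge 0$ bounding $M_{22}(i,i)\le k+O(\eps_5)$) to force $D_{ee}\gtrsim 1/k$ before invoking Lemma~\ref{lem:KDK_lemma} and Theorem~\ref{thm:tilde_p3_is_nphard}. The only cosmetic difference is that the paper splits $H_1=H_1^D+H_1^O$ and absorbs the diagonal part into the pivots before applying Lemma~\ref{lem:KDK_lemma}, whereas you treat all of $H_{11}$ as the perturbation $\Delta$; both are valid and yield the same polynomial dependence of $\epsilon_5$ on $m,k$.
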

Here, $\epsilon_5=c_1\epsilon_0k^{-\sigma}m^{-\tau}$, where $\epsilon_0$ is the universal tolerance in Theorem~\ref{thm:tilde_p3_is_nphard}.  Also, $c_1$, $\sigma$ and $\tau$ are universal constants whose values can be deduced from the forthcoming proof.

\begin{rem}
Note that this proof shows that all three of $(\Pc_1)$, $(\Pch_1)$, and $(\Pct_1)$ are NP-hard for the same reason noted in Remark~\ref{rem:allthreecovered}.
\end{rem}

\begin{proof}
First, suppose $\newgraph$ is 3-colorable, and fix a particular 3-coloring.  Recall that this 3-coloring extends to $\newgraph'$.
Let the color of $i'\in \newvertix(\newgraph')$ be denoted $c(i')\in\{1,2,3\}$.
Then we choose $\dv:=[\dv^\newedge;\dv^\newvertix]\in\R^{m+n}$ as follows.
For an edge $e=\{i'',j''\}\in \newedge(\newgraph'')$,  take
$$d^\newedge_e=\left\{\begin{array}{ll}
k-1, & \mbox{if $c(\pi(i''))=c(\pi(j''))$}, \\
k-0.5, & \mbox{if $c(\pi(i''))\ne c(\pi(j''))$}.
\end{array}\right.$$
For each $i''\in \newvertix(\newgraph'')$, take $d^\newvertix_{i''}=k-n_1(i'')-2n_2(i'')$, where 
\begin{align*}
n_1(i'')&=|\{e\in \newedge(\newgraph''): \mbox{$e=\{i'',j''\}$ is incident on $i''$ and $c(\pi(i''))=c(\pi(j''))$}\}|, \\
n_2(i'')&=|\{e\in \newedge(\newgraph''): \mbox{$e=\{i'',j''\}$ is incident on $i''$ and $c(\pi(i''))\ne c(\pi(j''))$}\}|. 
\end{align*}
The choice of $k$ in \eqref{eq:kdef} ensures that $\dv^\newedge\ge \mathbf{0}$ and $\dv^\newvertix\ge \mathbf{0}$.

We claim that the Schur complement of the $m\times m$ leading principal block of $B-\Diag(\dv)$ has form \eqref{eq:tiblocks}, and therefore $\rank(B-\Diag(\dv))=m+3$.  
Observe that the leading $m\times m$ block of $B-\Diag(\dv)$ is $kI-\Diag(\dv^\newedge)$, while the remaining $n\times n$ principal diagonal block is $A+kI-\Diag(\dv^\newvertix)$.
Therefore,  the Schur complement of the $(1,1)$ block equals $A+kI-\Diag(\dv^\newvertix)-K(kI-\Diag(\dv^\newedge))^{-1}K^\top$.  We will show that this matrix has the form of $\eqref{eq:tiblocks}$.

The $(1,1)$ block of $B-\Diag(\dv)$, $kI-\Diag(\dv^\newedge)$, is a diagonal matrix each of whose diagonal entries is either 1 or $1/2$ by construction of $B$ and $\dv^\newedge$.  By construction, the `1' entries correspond to pairs $\{i'',j''\}$ of rows/columns of $A$ such that $\pi(i'')$ and $\pi(j'')$ have  the same color, while the `1/2' entries correspond to pairs with different colors.  
Therefore, $K(kI-\Diag(\dv^\newedge))^{-1}K^\top$ is an $n\times n$ matrix that is the sum of $2\times 2$ principal submatrices, one submatrix for each $\{i'',j''\}\in \newedge(\newgraph'')$.  This submatrix is composed of four 1's at positions $(i'',i''),$ $(i'',j'')$, $(j'',i'')$, and $(j'',j'')$ if $(\pi(i''),\pi(j''))$ have the same color.  The submatrix has four 2's at these positions if  $(\pi(i''),\pi(j''))$  have different colors.

Since the entries of $A$ in the off-diagonal positions of these submatrices are all 2's, 
subtraction of $K(kI-\Diag(\dv^\newedge))^{-1}K^\top $  leaves a Schur complement  
$A+kI-\Diag(\dv^\newvertix)-K(kI-\Diag(\dv^\newedge))^{-1}K^\top$
whose off-diagonal entries are 1's if the parents of $i'',j''$ are in the same color class or 0's if they are in different color classes.  Thus, the off-diagonal positions of this
Schur complement indeed 
correspond to \eqref{eq:tiblocks}.

As for the diagonal entries of the Schur complement, one checks that we have constructed entries of $\dv^\newvertix$ equal exactly 1 plus the corresponding diagonal entry of $K(kI-\Diag(\dv^\newedge))^{-1}K^\top$, and thus the Schur complement has all 1's on the diagonal.  This concludes the argument that the Schur complement has the same structure as \eqref{eq:tiblocks} and thus has rank 3.  The same argument also assures semidefiniteness of the Schur complement, and therefore of the entire matrix $B-\Diag(\dv)$.

Conversely,  suppose there exists $\dv \in \R_+^{n + m}, H \in \SS^{n+m}, \norm{H}_F \leq \epsilon_5$ such that $B -\Diag(\dv) + H \succeq 0 $ and $\rank(B -\Diag(\dv) + H) \leq 3 + m$. Partition $H := \begin{bmatrix}
    H_1 & H_2^\top \\
    H_2 & H_3
\end{bmatrix}$ so that $H_1 \in \SS^m, H_2 \in \R^{n \times m}, H_3 \in \SS^{n}$.

The first step of the argument is to obtain a lower bound (\eqref{eq:b11lb} below) on the pivots in the leading principal $m\times m$ block.  Select an $e=\{u,v\}\in \newedge(\newgraph'')$.  Consider the $2\times 2$ principal submatrix of $B-\Diag(\dv)+H$ indexed by $\{e,u\}$  (i.e., $e$ is an index into the first $m$ rows/columns of $B-\Diag(\dv)+H$, while $u$ is an index into the last $n$ rows/columns).  This submatrix has the form 
$$B_1 := \begin{bmatrix}
    k- d_{\{u,v\}}+h_1 & 1+h_2 \\
    1+h_2 & k - d_{u} +h_3 
\end{bmatrix},$$
with $h_1^2+2h_2^2+h_3^2\le \epsilon_5^2\le 1/16$.
Here, $B_1(1,1)$ is a diagonal entry of $kI-\Diag(\dv^\newedge)+H_1$, $B_1(1,2)=B_1(2,1)$ comes from $K+H_2$, and $B_1(2,2)$ is a diagonal entry of $A+kI-\Diag(\dv^\newvertix)+H_3$.
Since $B_1\succeq 0$,
\begin{align}
 \det(B_1) \geq 0
&\implies (k - d_{\{u,v\}}+h_1) (k - d_{u}+h_3) \geq (1+h_2)^2\notag\\
& \implies k - d_{\{u,v\}}+h_1 \geq \frac{(1+h_2)^2}{k-d_u+h_3} \notag\\
& \implies k-d_{\{u,v\}}+h_1 \ge \frac{1}{2(k+1)},\label{eq:b11lb}
\end{align}
where the last line follows since $k-d_u+h_3\le k+1$ and $1+h_2\ge 3/4$.

Write $H_1=H_1^D+H_1^O$, where $H_1^D$ contains the diagonal entries and $H_1^O$ contains the off-diagonal entries.
The Schur complement of the $(1,1)$ block of $B+H-\Diag(\dv)$ is
\begin{align*}
S&:=A+kI-\Diag(\dv^\newvertix)+H_3 - (K+H_2)(kI-\Diag(\dv^\newedge)+H_1^D +H_1^O)^{-1}(K^{\top}+H_2^{\top}) \\
&= \bar{S} + H_3+T
\end{align*}
where
$$\bar{S}=A+kI-\Diag(\dv^\newvertix)-K(kI-\Diag(\dv^\newedge)+H_1^D)^{-1}K^{\top},$$
and
$\Vert T\Vert_F$ is at most
$$ 
\frac{1}{2} \Vert K\Vert_F \cdot \Vert (kI-\Diag(\dv^\newedge)+H_1^D)^{-1}\Vert_F
\cdot
\left(9\Vert K\Vert_F \cdot\Vert (kI-\Diag(\dv^\newedge)+H_1^D)^{-1}\Vert_F
\cdot \Vert H_1^O \Vert_F + 5\Vert H_2\Vert_F\right)
$$
by Lemma~\ref{lem:KDK_lemma}.  The lemma is applicable because by assumption $\Vert H_1^O\Vert_F\le\epsilon_5$ and\\
$\Vert (kI-\Diag(\dv^\newedge)+H_1^D)^{-1}\Vert_F\le 2\sqrt{m}(k+1)$ 
by \eqref{eq:b11lb}, assuming we impose the
restriction that $\epsilon_5\le 1/(4\sqrt{m}(k+1)).$

Furthermore, $\Vert H_2\Vert_F\le \epsilon_5$ while $\Vert K\Vert_F= \sqrt{2m}$.  Substituting these bounds into the preceding inequality yields:
$$\Vert T\Vert_F \le \sqrt{2}m(k+1) \left[18\sqrt{2}m(k+1)+5\right] \epsilon_5\le 40 m^2 (k+1)^2 \epsilon_5.
$$

Note that $\bar{S}$ is of the form $A_0+L$, where $A_0$ is matrix used in the proof of Theorem~\ref{thm:tilde_p3_is_nphard}, and
$L=kI-\Diag(\dv^\newvertix)-K(kI-\Diag(\dv^\newedge)+H_1^D)^{-1}K^{\top}$.  The nonzero entries of $L$ are exactly on the diagonal and in the positions of $A$ not indexed by $\mathcal{X}$, thus indicating that $L$ fits $\mathcal{X}$.


Thus, we conclude that the Schur complement $S$ of the $(1,1)$ block of $B-\Diag(\dv)+H$ is written as
$A_0+L+H'$ where $H'=H_3+T$ and therefore $\Vert H'\Vert_F\le \epsilon_5 + 40 m^2 (k+1)^2 \epsilon_5$.  By choosing a suitable $\epsilon_5$, we are assured that this bound is less than $\epsilon_0$.  We have already explained that $L$ fits $X$.  As the Schur complement of a positive definite leading principal submatrix, it follows that $A_0+L+H'\succeq 0$.



Now applying Lemma \ref{lemma_schur}, it follows that
\begin{align*}
3+m &\ge \rank(B -\Diag(\dv) + H)  \\
&= \rank( k I - \Diag(\dv^\newedge) + H_1 ) + \rank(S)  \\
    & = m + \rank(A_0+L+H').
\end{align*}
Thus, $\rank(A_0+L+H')\le 3$.  Since $L$ fits $\mathcal{X}$ and $\Vert H'\Vert_F\le \epsilon_0$,
Theorem~\ref{thm:tilde_p3_is_nphard} states that $\newgraph$ is 3-colorable.
\end{proof}

Finally, we prove that $(\Pc_2)$ and $(\Pch_2)$ are NP-hard.  The  {\em $(\Pc_2)$ input construction} that we now present is quite similar to the $(\Pc_1)$ input construction that precedes Theorem~\ref{thm_P1_tilda}.  In particular, we use the same choice of $A$ and $K$ as before.  Finally, we take
 $B\in\SS^{m+n}$ to be 
$$B := \begin{bmatrix}
    0 & K^{\top} \\
    K &  A
\end{bmatrix},$$
which is the same as the $(\Pc_1)$ input construction except for the omission of the additive term $kI$.

\begin{thm}
\label{thm_P2}
Let $B$ be constructed from input graph $\newgraph$ using the $(\Pc_2)$ input construction from the previous paragraph.  Then
\begin{itemize}
    \item If $\newgraph$ is 3-colorable, there exists $\dv\in\R_+^{n+m}$ such that $B+\Diag(\dv)\succeq 0$ and $\rank(B+\Diag(\dv))\le m+3$.
    \item
    Conversely,
    if there exists $\dv \in \R_+^{n + m}$ and $H \in \SS^{n+m}$ such that $\norm{H}_F \leq \epsilon_0$,
        $B +\Diag(\dv) + H \succeq 0 $, $\rank(B +\Diag(\dv) + H) \leq 3 + m$, and $\mathrm{nz}(H)\subseteq\mathrm{nz}(B)$
        then $\newgraph$ is 3-colorable.
\end{itemize}
\end{thm}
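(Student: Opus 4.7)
My plan is to follow the architecture of the proof of Theorem~\ref{thm_P1_tilda}, but the backward direction admits a substantial simplification because the input matrix $B$ has the zero matrix as its $(1,1)$ block (rather than $kI$), together with the sparsity constraint $\mathrm{nz}(H)\subseteq\mathrm{nz}(B)$.  For the forward direction, given a 3-coloring of $\newgraph$ that extends to $\newgraph'$, I would take $\dv^\newedge_e:=1$ when the endpoints of $e=\{u,v\}\in\newedge(\newgraph'')$ satisfy $c(\pi(u))=c(\pi(v))$ and $\dv^\newedge_e:=1/2$ otherwise, and $\dv^\newvertix_u:=1+n_1(u)+2n_2(u)$ with $n_1,n_2$ defined as in the proof of Theorem~\ref{thm_P1_tilda}.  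Computing the Schur complement of the positive definite $(1,1)$ block $\Diag(\dv^\newedge)$ then yields a matrix whose off-diagonal $(u,v)$-entry is $1$ if $\pi(u),\pi(v)$ share a color and $0$ otherwise, with all diagonal entries equal to $1$; up to permutation this has the block-diagonal form~\eqref{eq:tiblocks}, so it is PSD of rank $3$, and hence $B+\Diag(\dv)$ is PSD of rank $m+3$.

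For the backward direction, I would first observe that since the $(1,1)$ block of $B$ is identically zero, the sparsity constraint forces every off-diagonal entry of $H_1$ to vanish, and any diagonal entries of $H_1$ may be absorbed into $\dv^\newedge$.  Next, for each $e=\{u,v\}\in\newedge(\newgraph'')$ and $u\in e$, positive semidefiniteness of the $2\times 2$ principal submatrix indexed by $\{e,u\}$ yields $\dv^\newedge_e\cdot\dv^\newvertix_u\ge(1+H_2(u,e))^2\ge(1-\epsilon_0)^2$, so $\dv^\newedge_e>0$ for every edge $e$.  Thus $\Diag(\dv^\newedge)$ is positive definite of rank $m$, and Lemma~\ref{lemma_schur} provides the Schur complement
\begin{equation*}
S:=A+\Diag(\dv^\newvertix)+H_3-(K+H_2)\Diag(\dv^\newedge)^{-1}(K+H_2)^\top,
\end{equation*}
which is positive semidefinite with $\rank(S)\le 3$.

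The key step is then to decompose $S=A_0+L+H'$ with $L$ fitting $\mathcal{X}$ and $\|H'\|_F\le\epsilon_0$, where $A_0$ is the input matrix from the $\Pc_3$ reduction underlying Theorem~\ref{thm:tilde_p3_is_nphard}; this will immediately yield 3-colorability of $\newgraph$ via that theorem.  A direct calculation shows $S(u,v)=0$ whenever $\{\pi(u),\pi(v)\}\in\newedge(\newgraph')$ (the corresponding entries of $A$, of $(K+H_2)\Diag(\dv^\newedge)^{-1}(K+H_2)^\top$, and of $H_3$ all vanish by construction) and $S(u,v)=1+H_3(u,v)$ whenever $\pi(u)=\pi(v)$ and $u\ne v$.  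At all remaining off-diagonal positions (nonedges of $\newgraph'$ across distinct parents) and on the diagonal, $L$ is unconstrained by the fitting condition, so I would set $L(u,v):=S(u,v)-A_0(u,v)$ at these positions and $L(u,v):=0$ at positions in $\mathcal{X}$.  Then $H'=S-A_0-L$ is supported only on same-parent off-diagonal positions, where $H'(u,v)=H_3(u,v)$, giving $\|H'\|_F\le\|H_3\|_F\le\|H\|_F\le\epsilon_0$.

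The conceptual hurdle I anticipate is that, unlike in the proof of Theorem~\ref{thm_P1_tilda}, there is no way to bound $1/\dv^\newedge_e$ from above; without the $kI$ offset the quantity $\dv^\newedge_e$ can be arbitrarily small, so the contribution $(K+H_2)\Diag(\dv^\newedge)^{-1}(K+H_2)^\top$ may have arbitrarily large entries.  The resolution is the observation that these entries are nonzero only at positions where $L$ is free to take any value, so the entire contribution can be absorbed into $L$ rather than being controlled as perturbation noise via Lemma~\ref{lem:KDK_lemma}.
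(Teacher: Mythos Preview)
Your proposal is correct and follows essentially the same route as the paper: both directions proceed exactly as in Theorem~\ref{thm_P1_tilda}, with the backward direction simplified because the sparsity hypothesis $\mathrm{nz}(H)\subseteq\mathrm{nz}(B)$ forces $H_1=0$ and makes $(K+H_2)\Diag(\dv^{\newedge})^{-1}(K+H_2)^{\top}$ supported only on nonedge and diagonal positions, so that its entire contribution is absorbed into $L$ and Theorem~\ref{thm:tilde_p3_is_nphard} applies with $H'=H_3$ restricted to the same-parent off-diagonal positions. Your additional justification that $\dv^{\newedge}_e>0$ via the $2\times2$ principal minor, and your explicit bookkeeping distinguishing $A$ from $A_0$ in the decomposition $S=A_0+L+H'$, are details the paper leaves implicit but does not handle differently.
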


Here $\epsilon_0$ is the universal constant in Theorem~\ref{thm:tilde_p3_is_nphard}.

\begin{rem}
Note that this proof shows that both $(\Pc_2)$ and $(\Pch_2)$ are NP-hard.  The theorem does not apply to $(\Pct_2)$ because of the restriction in the second claim on the nonzero entries of $H$.  We return to this point after the proof is complete.  
\end{rem}

\begin{proof}
The proof of the first claim is nearly identical the the proof of the first claim of Theorem~\ref{thm_P1_tilda}.  Assuming $\newgraph$ is 3-colorable, we fix a 3-coloring of $\newgraph'$ and we select diagonal entries for the $(1,1)$ block of either $1/2$ or $1$ depending whether the endpoints of the corresponding nonedge are of different or the same color.  The resulting matrix has rank at most $m+3$ for the same reason as in the proof of Theorem~\ref{thm_P1_tilda}.

For the proof of the second claim, we partition $H$ and $\dv$ in the same manner as in the proof of Theorem~\ref{thm_P1_tilda}. By the assumption that $\mathrm{nz}(H)\subseteq \mathrm{nz}(B)$, $H_1=0$.  
Similar to Theorem~\ref{thm_P1_tilda}, we can write the Schur complement of the $(1,1)$ block of $B+\Diag(\dv)+H$ as
\begin{align*}
S&:=A+\Diag(\dv^\newvertix)+H_3 - (K+H_2)\Diag(\dv^\newedge)^{-1}(K+H_2)^\top,
\end{align*}
with $\Vert H_3\Vert_F\le \epsilon_0$.
Now we observe that $L:=\Diag(\dv^\newvertix)-(K+H_2)\Diag(\dv^\newedge)^{-1}(K+H_2)^\top$ can have nonzero entries only in the entries of $S$ corresponding to nonedges of $\newgraph'$ because $K$ and $H_2$ have the same sparsity pattern.  Therefore, the additive term can affect only the entries of $A+H_3$ that were already allowed to be arbitrary in the second claim of Theorem~\ref{thm:tilde_p3_is_nphard}, i.e., $L$ fits $\mathcal{X}$.  Thus, by the second claim Theorem~\ref{thm:tilde_p3_is_nphard},
if $\rank(S)\le 3$ and $S\succeq 0$, then $\newgraph$ is 3-colorable. 
\end{proof}

We now explain why the technique used in Theorem~\ref{thm_P1_tilda} to show that $(\Pct_1)$ is NP-hard does not immediately extend in the preceding theorem to show that $(\Pct_2)$ is NP-hard.  In the proof that $(\Pct_1)$ is hard, we needed a bound on the perturbation terms in $S$, the Schur complement of the $(1,1)$ block.  For this purpose, we used Lemma~\ref{lem:KDK_lemma}. For this lemma, we needed an upper bound on the entries of $(kI-\Diag(\dv^\newedge)+H_1^D)^{-1}$, i.e., a lower bound on the diagonal entries of $kI-\Diag(\dv^\newedge)+H_1^D$.  This lower bound came from 
\eqref{eq:b11lb}, which relied in turn on an upper bound of $k+1$ on $B_1(2,2)=k-d_u+h_3$.  The reason there is an upper bound in on $B_1(2,2)$ in $(\Pct_1)$ is that a nonnegative vector $\dv$ is subtracted from the diagonal entries of the given matrix.  In contrast, in $(\Pct_2)$, a nonnegative entry is added to the diagonal entries, and hence the corresponding matrix $B_1(2,2)$ in the context of $(\Pct_2)$ has no prior upper bound.  This means that there is no prior lower bound on the pivots, meaning there is no prior upper bound on the Schur-complement update term arising in the proof of Theorem~\ref{thm_P1_tilda}.

Thus, we need to construct a different $B$ for $(\Pct_2)$ and a more complicated argument to show that even in the presence of small pivots in the $(1,1)$ block, if $\newgraph$ is not 3-colorable, then there is no rank-3 completion of the Schur complement. This proof is deferred to the appendix.

\end{section}

\begin{section}{Completeness in the first-order theory of the reals}
\label{sec:existsR}

In Section~\ref{sec:alg}, we saw that we can solve problems ($\Pc_1$) and ($\Pc_2$) by solving a series of systems of polynomial equations. Those algorithms may need to solve exponentially many polynomial systems in the worst case. However, since the decision version of ($\Pc_2$) that asks whether the optimal value is $r$, can be formulated as
\[
U U^{\top} - \Diag(\dv) = A,
\]
where the variables are $U \in \R^{n \times r}$ and $\dv \in \R^n$, we see that solving these problems is no harder than solving some specially structured polynomial systems with the number of variables and equations bounded by a polynomial function of $n$ (and the degree of each polynomial is at most 2). In this section, we will show that $(\Pc_2)$ and $(\Pc_3)$ are complete for the complexity class of solving real polynomial system, also called first-order theory of the reals and usually denoted $\exists\R$ in the literature.  This yields another proof that these two problems are NP-hard and further establishes that both problems lie in $\textup{PSPACE}$ since $\textup{NP}\subseteq \exists\R\subseteq \textup{PSPACE}$, the latter inclusion a result of Canny \cite{Canny1988}.  The results in this section are independent of those in the previous section because the results in this section assume exact input data and are therefore not amenable to perturbation.  On the other hand, the results in this section exactly characterize the complexity of $(\Pc_2)$ and $(\Pc_3)$, whereas the previous section established NP-hardness but not an exact characterization of the complexity.
In this section, the problems under consideration are decision problems.  In other words, $(\Pc_1)$ is understood as: given $A$ and $r$, does there exist a $\dv\ge\mathbf{0}$ such that $\rank(A-\Diag(\dv))\le r$ and $A-\Diag(\dv)\succeq 0$?  An algorithm to solve this problem is required merely to correctly output yes/no without necessarily producing the vector $\dv$.  The same applies for $(\Pc_2)$ and $(\Pc_3)$.

We start with a claim that $(\Pc_3)$ is reducible to $(\Pc_2)$, and therefore a proof of the completeness of $(\Pc_3)$ immediately extends to $(\Pc_2)$.  (The opposite reduction is trivial, since $(\Pc_2)$ is a special case of $(\Pc_3)$.)  Our argument does not, however, extend to $(\Pc_1)$ for reasons explained below.  Note that the results in the previous section do not imply a reduction of $(\Pc_3)$ to $(\Pc_2)$ because the theorems in that section started with a graph-coloring instance rather than a generic instance of $(\Pc_3)$.

The reduction of $(\Pc_3)$ to $(\Pc_2)$ again involves an off-diagonal block called $K$  constructed in the previous section, which is a node-edge incidence matrix all of whose entries were either 0 or 1.  Taking the Schur complement of the $(1,1)$ block causes $K$ to subtract arbitrary nonnegative numbers from the specified off-diagonal entries of $A$.  For a fully general reduction of $(\Pc_3)$ to $(\Pc_2)$, we need to either add or subtract numbers from the specified off-diagonal entries.  This is accomplished by taking a Schur complement twice, once with $K$ and a second time with $\bar K$, which is a node-arc (rather than node-edge) incidence matrix for the rows/columns of the unspecified entries of $A$.  

We now formally present this construction and the theorem establishing its correctness.
Given an instance of $(\Pc_3)$, that is,
$A \in \SS^n$, $\mathcal{X}\subseteq\cE(\cK(\{1,\ldots,n\}))$, let $\mathcal{X}^c$ denote\\
$\cE(\cK(\{1,\ldots,n\}))\setminus \mathcal{X}$, $m:=|\mathcal{X}^c|$, and
$$B \in \SS^{2m + n} := \begin{bmatrix}
    0 & 0 & K^{\top} \\ 0 & 0 & \xoverline{K}^{\top} \\ K & \xoverline{K} & A
\end{bmatrix}$$
be an instance of $(\Pc_2)$.  
Here, $K \in \{0,1\}^{n \times m}$ is the node-edge incidence matrix of $\mathcal{X}^c$ (in other words, one row per index in $\{1,\ldots,n\}$, one column per entry $\{i,j\}\in \mathcal{X}^c$, and exactly two 1's in this column in positions $i$ and $j$).  On the other hand, $\xoverline{K} \in \{-1,0,1\}^{n \times m}$ is the node-arc incidence matrix of $\mathcal{X}^c$ after assigning an arbitrary orientation to the edges.
The following theorem states that this construction
reduces $(\Pc_3)$ to $(\Pc_2)$.

\begin{thm}
\label{thm:P3toP2}
Let $(A,\mathcal{X},r)$ be input data for $(\Pc_3)$ ($r\in\{1,\ldots,n\})$, let
$B$ be the input matrix constructed in the preceding paragraph, and consider input data $(B,2m+r)$
for $(\Pc_2)$.  Then,
\begin{itemize}
    \item[]
(i) there exists an $R\in\SS^n$ such that $R$ fits $\mathcal{X}$,
$A+R\succeq 0$, and $\rank(A+R)\le r$ 
\end{itemize}

if and only if

\begin{itemize}
    \item[](ii) there exists a $[\uv;\vv;\dv]\in\R^m\times\R^m\times\R^n$ such that
$B+\Diag([\uv;\vv;\dv])\succeq 0$ and
$\rank(B+\Diag([\uv;\vv;\dv]))\le 2m+r$.
\end{itemize}
\label{thm:P3redP2}
\end{thm}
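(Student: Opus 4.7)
The plan is to analyze the Schur complement of the $2m \times 2m$ leading principal block of $B+\Diag([\uv;\vv;\dv])$, which is the diagonal matrix $\Diag([\uv;\vv])$. Writing out the formula, this Schur complement equals
\[
S := A+\Diag(\dv) - K\Diag(\uv)^{-1}K^{\top} - \bar K\Diag(\vv)^{-1}\bar K^{\top},
\]
provided the leading block is positive definite. The key observation about the construction is that for an edge $e=\{i,j\}\in\mathcal{X}^c$ oriented (say) from $i$ to $j$ in $\bar K$, the product $K\Diag(\uv)^{-1}K^{\top}$ contributes $1/u_e$ to the $(i,j)$ entry while $\bar K\Diag(\vv)^{-1}\bar K^{\top}$ contributes $-1/v_e$, so the Schur complement modifies $A_{ij}$ by $-1/u_e+1/v_e$. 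As $(u_e,v_e)$ ranges over $\R_{++}^2$, this quantity ranges over all of $\R$. For $\{i,j\}\in\mathcal{X}$, in contrast, no column of $K$ or $\bar K$ has both $i$ and $j$ in its support, so the Schur complement leaves $A_{ij}$ untouched.

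For the direction (i)$\Rightarrow$(ii), given $R$ fitting $\mathcal{X}$ with $A+R\succeq 0$ and $\rank(A+R)\le r$, I select $u_e,v_e>0$ for each $e=\{i,j\}\in\mathcal{X}^c$ so that $-1/u_e+1/v_e=R_{ij}$; this is always possible (e.g., choose $u_e$ with $1/u_e>\max\{0,-R_{ij}\}$ and set $v_e = 1/(R_{ij}+1/u_e)$). I then set $d_i:=R_{ii}+\sum_{e\ni i}(1/u_e+1/v_e)$, so the diagonal entries of $S$ also match those of $A+R$. By the observation above, $S=A+R$, so $S\succeq 0$ and $\rank(S)\le r$. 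Since $\Diag([\uv;\vv])\succ 0$, Lemma~\ref{lemma_schur} yields $B+\Diag([\uv;\vv;\dv])\succeq 0$ with rank exactly $2m+\rank(S)\le 2m+r$.

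For (ii)$\Rightarrow$(i), I first argue that any feasible $\uv,\vv$ must be strictly positive coordinate-wise: if $u_e=0$ for some $e$, then the $e$-th diagonal entry of $B+\Diag([\uv;\vv;\dv])$ is zero, which for a PSD matrix forces the entire $e$-th row to vanish, contradicting that the column $K_{\cdot,e}$ is nonzero (and is not perturbed by $\Diag$). With the leading $2m\times 2m$ block positive definite, Lemma~\ref{lemma_schur} again applies, so $S\succeq 0$ and $\rank(S)\le r$. Setting $R:=S-A$, the fact that the two rank-one updates contribute nothing to entries indexed by $\mathcal{X}$ shows $R$ fits $\mathcal{X}$, while $A+R=S$ is PSD of rank at most $r$, establishing (i). The only conceptual subtlety is the joint use of $K$ and $\bar K$: one needs both the unsigned and signed incidence matrices so that the combined contribution at off-diagonal positions of $\mathcal{X}^c$ can take either sign — this is precisely what allows the reduction to succeed for $(\Pc_2)$ but would fail for $(\Pc_1)$, where the sign restriction on $\dv$ breaks the symmetry.
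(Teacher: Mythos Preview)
Your proof is correct and follows essentially the same Schur-complement approach as the paper: both directions hinge on the identity $R_{ij}=-1/u_e+1/v_e$ for $\{i,j\}\in\mathcal{X}^c$ and the observation that entries indexed by $\mathcal{X}$ are untouched, with Lemma~\ref{lemma_schur} supplying the rank and semidefiniteness conclusions. The paper gives a specific closed-form choice $v_e=1/(|R_{ij}|+1)$, $u_e=v_e/(1-v_eR_{ij})$ in the (i)$\Rightarrow$(ii) direction, but your existential choice (pick $1/u_e>\max\{0,-R_{ij}\}$, then solve for $v_e$) is equally valid and arguably cleaner.
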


\begin{proof}
\underline{(ii)$\Rightarrow$(i).} Suppose
$[\uv;\vv;\dv] \in \R^{2m+n} $ satisfies
$B+\Diag([\uv;\vv;\dv])\succeq 0$ and 
$\rank(B+\Diag([\uv;\vv;\dv]))\le 2m+r$ for some $r$.
Let $R:= \Diag(\dv) - K \Diag(\uv)^{-1} K^{\top} - \bar{K}\Diag(\vv)^{-1}\bar K^\top$.  Since the entries of $\uv,\vv$ are associated with $\{i,j\}\in \mathcal{X}^c$, assign subscripts to these entries as $u_{ij},v_{ij}$ respectively for $\{i,j\}\in \mathcal{X}^c$.

Under hypothesis (ii), necessarily $\uv>\mathbf{0}$ and
$\vv>\mathbf{0}$ since every column of $K$ has at least one nonzero entry.  By 
Lemma \ref{lemma_schur},
$$\begin{bmatrix}
    \Diag(\uv) & 0 & K^{\top} \\ 0 & \Diag(\vv) & \xoverline{K}^{\top} \\ K & \xoverline{K} & A + \Diag(\dv)
\end{bmatrix} \succeq 0$$
implies $A + R \succeq 0$.  Furthermore, 
and $2m+r \ge \rank(B + \Diag([\uv;\vv;\dv])) = \rank(\Diag(\uv)) + \rank(\Diag(\vv)) + \rank(  A + R) = 2 m + \rank(  A + R)$.
Notice that $R$ fits $\mathcal{X}$ and, in particular, has the following  structure:
$$R_{ij} = \begin{cases} \displaystyle d_i - \sum\limits_{il \in \delta(i)} \left(\dfrac{1}{u_{il}} + \dfrac{1}{v_{il}}\right),  & \text{if } i = j, \\
 - \left(\dfrac{1}{u_{ij}} - \dfrac{1}{v_{ij}}  \right),  & \text{if } \{i,j\} \in  \mathcal{X}^c, \\
 0, & \text{otherwise.}
\end{cases}$$



\underline{(i)$\Rightarrow$(ii).} Suppose that 
$R\in \SS^n$, $R$ fits $\mathcal{X}$, $A + R \succeq 0$ and $\rank(A + R) \le r$.
We construct $[\uv;\vv;\dv] \in \R^{2m+n}$ as follows:
\begin{equation}
\begin{aligned}
     & v_{ij} := \dfrac{1}{|R_{ij}| + 1} , \forall \{i,j\} \in  \mathcal{X}^c, \\
     & u_{ij} :=  \dfrac{v_{ij}}{1 - v_{ij} R_{ij}} , \forall \{i,j\} \in \mathcal{X}^c, \\
     & d_{i} :=  R_{ii} + \sum\limits_{ij \in \delta(i)} \left(\dfrac{1}{u_{ij}  } + \dfrac{1}{v_{ij} } \right)    , \forall i \in \{1,\dots,n\}.
\end{aligned}
\label{eq:vud}
\end{equation}
Note that the denominator of $u_{ij}$ is assuredly positive because of the definition of $v_{ij}$.
Under this construction, we have $\Diag(\dv) - K \Diag(\uv)^{-1} K^{\top} - \bar{K}\Diag(\vv)^{-1}\bar{K}^\top = R$. Applying Lemma \ref{lemma_schur}, it follows that $B + \Diag([\uv;\vv;\dv] ) \succeq 0$ and $\rank(B + \Diag([\uv;\vv;\dv] ))=2m+\rank(A+R)\le 2m+r.$
\end{proof}

Next, we explain why $(\Pc_3)$ lies in $\exists\R$.  Suppose $(A,\mathcal{X},r)$ is input data for $(\Pc_3)$.  Observe that $(\Pc_3)$ may be written as: do there exist $L\in\SS^n$ and $U\in\R^{n\times r}$ such that $L$ fits $\mathcal{X}$ and $A+L=UU^{\top}$?  Both conditions, namely ``$L$ fits $\mathcal{X}$" and ``$A+L=UU^{\top}$," are polynomial equations in the unknowns $L,U$.

The main result of this section is the other direction, namely, the reduction of existence of a solution to an arbitrary polynomial system to $(\Pc_3)$.
Our proof is based on Shitov's result in \cite{shitov2016hard}. Our proof works for both Turing-machine model (which is the usual definition of $\exists\R$) and a real-number model of computation (see, e.g., \cite{Blum1989OnAT}) as it only involves elementary arithmetic operations.  In the Turing machine case, one assumes that the input to $(\Pc_3)$ (i.e., specified matrix entries) consists of integer data, while in the real-number case, the input to $(\Pc_3)$ may contain arbitrary real numbers.
Shitov shows that finding a solution of any real polynomial system can be reduced to finding rank-3 matrix completion of certain symmetric matrix. A related idea appeared in \cite{mnev1988universality} that for every positive integer $d$, every semialgebraic set in $\R^d$ is ``\emph{stably equivalent}\footnote{for a definition, see \cite{mnev1988universality}}'' to the set of all geometric (matrix) realizations of a rank-3 oriented matroid.

The reason that Shitov's result does not apply directly is that his reduction specifies some diagonal entries of the input matrix, whereas for $(\Pc_3)$, all diagonal entries must be unspecified.  Therefore, our presentation follows Shitov's construction while focusing particular attention on how our construction diverges from Shitov's.  Assume we are given a system of polynomial equalities and inequalities with vector of variables $\xv\in\R^n$. A polynomial inequality of the form $p(\xv)\ge 0$ can be replaced by $p(\xv)=z^2$, where $z$ is a new variable.  Similarly, an inequality of the form $p(\xv)>0$ may be replaced by $z^2p(\xv)=1$, where again $z$ is a new variable.  Thus, we may assume that our system is written $f_1(\xv)=\cdots=f_t(\xv)=0$ where $F=(f_1,\ldots,f_t)$ is a sequence of polynomials.

For any monomial $p = \xi x_{i_1} \dots x_{i_k} $ (with $\xi \in \Z$ in the Turing machine model or $\xi\in\R$ in the real-number model) appearing in one of the $f_s$'s, we define
$$\sigma(p) := \{ \pm 1, \pm \xi, \pm x_{i_1}, \pm x_{i_1} x_{i_2}, \dots, \pm x_{i_1} \dots x_{i_k}, p \},$$
that is, a sequence of $2k+5$ monomials.

For a general real polynomial $f = p_1 + \dots + p_s$ written as a sum of monomials, we define the following sequence of polynomials:
$$\sigma(f) := \sigma(p_1) \cup \sigma(p_2) \cup \dots \cup \sigma(p_s) \cup \{ 0, \pm p_1, \pm (p_1 + p_2), \dots ,\pm f \} \cup \{ \pm x_1 , \dots, \pm x_n \}  $$
 and for the input sequence of real polynomials $F = \{ f_1,f_2,\ldots,f_t \}$, we define $\sigma(F) := \sigma(f_1) \cup \sigma(f_2) \cup  \dots \cup  \sigma(f_t) $.

Let $\sigma = \sigma(F)$, and let $\sigma^3$ denote all triples of elements in $\sigma$. We denote by $\mathcal{H} = \mathcal{H}(F)$ the set of those 3-vectors in $\sigma^3$ that have one or more entries equal to $1$ or $-1$. 

For this proof, we need four types of matrices.  A plain symbol like $A$ denotes a matrix in $\R^{m\times n}$ or $\SS^n$.   The notation
$A[\xv]$ denotes a matrix whose entries are polynomials in unknowns $x_1,\ldots,x_n$ and whose coefficients are in $\Z$ in the Turing-machine case or in $\R$ in the real-number model case.  Given such a polynomial matrix, if $\xiv\in\R^n$ is a vector of numbers, then $A[\xiv]$ denotes the numerical matrix resulting from substituting $\xiv$ for $\xv$.  

The notation $A^*$ denotes a matrix whose entries either are in the base ring ($\Z$ or $\R$) or are `$*$', meaning `unspecified.'  Finally, if $A^*$ is a matrix with some unspecified entries, then $A^\#$ is a matrix in $\R^{m\times n}$ or $\SS^n$ (i.e., with real-number entries) that represents a completion of $A^*$, in other words, each `$*$' in $A^*$ is replaced by a real number.

Denote by $U[\xv]$ the $|\mathcal{H}|\times 3$ matrix whose columns are the vectors in $\mathcal{H}$ in some order and define $W[\xv] =  U[\xv]^{\top}U[\xv]$.
Shitov constructs a symmetric $|\mathcal{H}|\times|\mathcal{H}|$ matrix $B^*$ in following way:
$$ B^*(\uv,\vv) := \begin{cases} 0 & \text{ if } W[\xv](\uv,\vv) \in F \\
W[\xv](\uv,\vv) & \text{ if } W[\xv](\uv,\vv) \text{ is a constant (i.e., a degree-0 polynomial)} \\
* & \text{otherwise.} 
 \end{cases} $$
 In this definition, $\uv,\vv$ represent two elements of $\mathcal{H}$.
 
 \begin{rem}
The rank of any completion of $B^*$ will be at least 3 because the columns of the $3\times 3$ identity matrix appears as elements of $\mathcal{H}$.
 \end{rem}
 
 Shitov's main result establishes an equivalence between completing $B^*$ and solving the polynomial system.  These results are as follows.
 
 \begin{lem}
 \label{shitov thm}
 (see Lemma 11 in \cite{shitov2016hard}) Let $P,L$ be $3 \times |\mathcal{H}|$ matrices over $\R$ such that the matrix $P^{\top} L$ is a completion of $B^*$. Let $C$ be the matrix obtained by taking the columns of $L$ with indices
in $E = \{(1, 0, 0), (0, 1, 0), (0, 0, 1)\}\subseteq \mathcal{H}$.  Then we have $C^{\top} P = C^{-1} L = U[\xiv]$ 
where $\xiv$ is a solution of the system of polynomial equations $f_1 = 0, \ldots, f_t = 0$. \end{lem}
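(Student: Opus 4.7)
The plan is to first exploit the $I_3$-block of $B^*$ at the indices $E \times E$ to normalize $P$ and $L$, then recover a candidate solution $\xiv$ by reading off specific columns of the normalized $L$, and finally verify that $\xiv$ satisfies every $f_s = 0$ using the zero-specified entries of $B^*$.

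First observe that for $\uv, \vv \in E$ the polynomial $W[\xv](\uv, \vv) = \uv \cdot \vv$ is the constant $\delta_{\uv,\vv}$, so $B^*$ restricted to $E \times E$ is specified to be $I_3$. Any rank-at-most-$3$ completion $P^{\top} L$ must therefore satisfy $P_E^{\top} L_E = I_3$, where $P_E, L_E$ denote the $3 \times 3$ submatrices of $P, L$ at columns $E$. In particular $C = L_E$ is invertible and $P_E = (C^{\top})^{-1}$. Set $\tilde L := C^{-1} L$ and $\tilde P := C^{\top} P$; then $\tilde L_E = \tilde P_E = I_3$ and $\tilde P^{\top} \tilde L = P^{\top} L$ still completes $B^*$. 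The task reduces to showing $\tilde L = \tilde P = U[\xiv]$ for some $\xiv \in \R^n$ solving the system.

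Next, extract $\xiv$ from $\tilde L$ directly: for each variable $x_i$ pick a column $\vv_i \in \mathcal{H}$ whose coordinates involve $x_i$ together with at least one constant (e.g., the triple $(1, x_i, 0)$, which lies in $\mathcal{H}$ since $0, 1, x_i \in \sigma$), and let $\xi_i$ be the appropriate entry of $\tilde L_{\vv_i}$. With $\xiv$ in hand, the main step is an induction on the build-up complexity of polynomials in $\sigma$: for every $p \in \sigma$, the entries of $\tilde L_\vv$ corresponding to coordinates of $\vv$ equal to $p$ must be $p(\xiv)$. At the inductive step one exploits that $\sigma$ contains each partial product $\pm x_{i_1} \cdots x_{i_m}$ and each partial sum $\pm(p_1 + \cdots + p_j)$, so every new polynomial is a product or a sum of polynomials already handled. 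For each such new polynomial, one locates a pair $\uv, \vv \in \mathcal{H}$ for which $W[\xv](\uv, \vv)$ encodes the desired arithmetic relation and whose entry of $B^*$ is already specified (either as a known constant or as an element of $F$, hence $0$); the previously-determined entries of $\tilde P_\uv$ then force the unknown entry of $\tilde L_\vv$ to equal the correct evaluation. A symmetric argument on rows gives $\tilde P_\vv = \tilde L_\vv$. Finally, the zero specifications coming from $W[\xv](\uv, \vv) = f_s$ yield $f_s(\xiv) = 0$ for every $s$, proving that $\xiv$ is a solution.

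The main obstacle is the combinatorial bookkeeping: one must verify that, for every polynomial encountered in the induction, the recipe defining $\sigma$ actually supplies a pair $\uv, \vv \in \mathcal{H}$ realizing the required product-or-sum relation as an entry of $W[\xv]$ whose image in $B^*$ is already pinned down. This is precisely the role of the $2k+5$ monomials in each $\sigma(p)$ (covering every prefix product with both signs and the unit constant) and of the partial sums $\pm(p_1 + \cdots + p_j)$ in $\sigma(f)$: at each inductive level both factors needed to form the next product, and both summands needed to form the next sum, appear as coordinates of some $\vv \in \mathcal{H}$. Once this scaffolding is verified, the remaining argument is routine linear algebra on $\tilde P^{\top} \tilde L$.
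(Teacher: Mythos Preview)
Your proposal is correct and follows essentially the same approach as the paper's treatment. The paper does not give a self-contained proof of this lemma; it cites Shitov and sketches his argument as ``proceed inductively from constants to monomials to polynomials, showing that each step of the completion is essentially forced,'' which is exactly the scheme you lay out (normalize via the $I_3$-block at $E$, define $\xiv$ from the appropriate columns, then push the induction through $\sigma$ using the specified constant and zero entries of $B^*$).
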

 
 \begin{cor} (see Corollary 12 in \cite{shitov2016hard})
 \label{cor_shitov}
System of polynomial equations $f_1 = 0, \dots, f_t = 0$ has a solution if and only if $B^*$ admits a rank-3 completion with respect to $\R$ .
 \end{cor}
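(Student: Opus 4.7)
The plan is to derive Corollary~\ref{cor_shitov} directly from Lemma~\ref{shitov thm}, by handling each direction separately and relying on the specific construction of $U[\xv]$ and $B^*$ introduced just above the statement.

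For the forward direction ($\Rightarrow$), suppose $\xiv\in\R^n$ satisfies $f_1(\xiv)=\cdots=f_t(\xiv)=0$.  I would set $B^\#:=U[\xiv]^\top U[\xiv]=W[\xiv]$ and check that $B^\#$ is a valid completion of $B^*$ of rank exactly $3$.  The rank claim has two parts: rank at most $3$ is immediate because $U[\xiv]$ has only $3$ rows; rank at least $3$ follows from the remark preceding the corollary, since $(1,0,0),(0,1,0),(0,0,1)\in\mathcal{H}$ imply that $U[\xiv]$ contains a $3\times 3$ identity submatrix.  To verify that $B^\#$ agrees with $B^*$ on every specified entry, I would go through the three cases in the definition of $B^*(\uv,\vv)$: if $W[\xv](\uv,\vv)$ is a constant polynomial, then $B^\#(\uv,\vv)=W[\xiv](\uv,\vv)$ is that same constant; if $W[\xv](\uv,\vv)=f_s$ for some $s$, then $B^\#(\uv,\vv)=f_s(\xiv)=0=B^*(\uv,\vv)$; if the entry of $B^*$ is $*$, there is nothing to check.

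For the reverse direction ($\Leftarrow$), suppose $B^*$ admits a rank-$3$ completion $B^\#$.  Since $B^\#$ is a real $|\mathcal{H}|\times|\mathcal{H}|$ matrix of rank at most $3$, it factors as $B^\#=P^\top L$ for some $P,L\in\R^{3\times|\mathcal{H}|}$ (for example, via an SVD-based factorization).  Applying Lemma~\ref{shitov thm} to this factorization produces a vector $\xiv\in\R^n$ with $C^\top P=C^{-1}L=U[\xiv]$, and the lemma guarantees $f_1(\xiv)=\cdots=f_t(\xiv)=0$.  Thus the polynomial system has a solution.

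The routine obstacle is only bookkeeping: one must trust that Shitov's construction of $\sigma(F)$ and $\mathcal{H}(F)$ has been made rich enough so that every polynomial entry of $W[\xv]$ that is not a constant is literally one of $f_1,\ldots,f_t$ (rather than some other intermediate expression), so that the case analysis above is exhaustive.  The substantive work is Lemma~\ref{shitov thm} itself; once that is in hand, the corollary reduces to the brief bidirectional argument sketched above.
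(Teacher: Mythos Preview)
Your proposal is correct and follows essentially the same approach as the paper: the forward direction plugs a solution $\xiv$ into $W[\xv]$ to obtain a rank-3 completion, and the reverse direction factors a given rank-3 completion as $P^\top L$ and invokes Lemma~\ref{shitov thm} directly. One small clarification: your closing worry that the case analysis might not be exhaustive is unfounded, since the third case in the definition of $B^*(\uv,\vv)$ is literally ``otherwise,'' so any entry of $W[\xv]$ that is neither a constant nor a member of $F$ is automatically left unspecified and requires no verification.
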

 
 One direction of Shitov's proof is straightforward: If $\xiv$ is a solution to $f_1=0, \ldots, f_t=0$, then $B^\#:=W[\xiv]=U[\xiv]^{\top}U[\xiv]$ is a valid completion of $B^*$.  The other direction of his proof shows inductively that a completion of $B^*$ must have the form $W[\xiv]$ for some $\xiv$.  The proof
 proceeds from constants to monomials to polynomials, showing that each step of the completion is essentially forced.  Since $B^*(\uv,\vv)=0$ for $(\uv,\vv)$ such that $W[\xv](\uv,\vv)$ is a polynomial in the original system, this induction shows that a valid completion exists if and only if the original polynomials can simultaneously be set to 0 by $\xiv$.
 
 Corollary \ref{cor_shitov} does not immediately apply to $(\Pc_3)$ because the constructed matrix $B^*$ may have specified diagonal entries (i.e., diagonal entries not equal to `$*$') and therefore is not a valid input instance of $(\Pc_3)$.

Let $\xoverline{\mathcal{H}}$ denote the extension of $\mathcal{H}$ to the multiset
$\xoverline{\mathcal{H}}:=\mathcal{H}\cup\mathcal{E}$ where
$\mathcal{E}=\{\ev_1,\ev_1,\ev_2,\ev_2,\ev_3,\ev_3 \}$ (i.e., two copies of each column of the $3\times 3$ identity matrix) so that
$|\xoverline{\mathcal{H}}|=|\mathcal{H}|+6$.
Construct $3\times |\xoverline{\mathcal{H}}|$ matrix $\xoverline{U}[\xv]$  and $\xoverline{W}[\xv] = \xoverline{U}[\xv]^{\top} \xoverline{U}[\xv]$ from $\xoverline{\mathcal{H}}$ in the same way that $U[\xv]$, $W[\xv]$ were constructed from $\mathcal{H}$. Furthermore,  construct $\xoverline{B}^*$ with respect to $\xoverline{W}[\xv]$ in the same way that $B^*$ was constructed with respect to $W[\xv]$. However, unlike the construction of $B^*$, we make all diagonal entries of $\xoverline{B}^*$ unspecified even if they are constant.  In this way, $\xoverline{B}^*$ is a valid input instance for $(\Pc_3)$.


\begin{thm}
The system of polynomial equations $f_1 = 0, \dots, f_t = 0$ has a solution over $\R$ iff $\xoverline{B}^*$ can be completed to a rank-$3$ positive semidefinite matrix (i.e., a solution to $(\Pc_3)$ is obtained).
\label{thm:red_poly_p3}
\end{thm}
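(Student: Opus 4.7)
The plan is to prove each direction separately, with the forward direction essentially by construction and the backward direction reducing to Shitov's Corollary~\ref{cor_shitov}. For the forward direction, if $\xiv\in\R^n$ satisfies $f_1(\xiv)=\cdots=f_t(\xiv)=0$, then $\xoverline{B}^\#:=\xoverline{U}[\xiv]^{\top}\xoverline{U}[\xiv]$ is a rank-at-most-$3$ positive semidefinite matrix whose specified entries match $\xoverline{B}^*$: the constant entries of $\xoverline{W}[\xv]$ are preserved under evaluation, and the entries that $\xoverline{B}^*$ sets to zero correspond to polynomials in $F$ that vanish at $\xiv$.

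For the backward direction, I would begin with a rank-$3$ PSD completion $\xoverline{B}^\#=P^{\top}P$ where $P\in\R^{3\times|\xoverline{\mathcal{H}}|}$. The key new step, beyond Shitov's argument, is to exploit the triple redundancy of each $\ev_i$ inside $\xoverline{\mathcal{H}}$ (one copy from $\mathcal{H}$ and two additional copies from $\mathcal{E}$) to fix a reference frame for $P$. Let $p_i^{(1)},p_i^{(2)},p_i^{(3)}$ denote the columns of $P$ at the three positions of $\ev_i$. The entries of $\xoverline{B}^*$ between any two of these positions are specified as the constants $\ev_i^{\top}\ev_j$ arising from $\xoverline{W}[\xv]$, giving $\iprod{p_i^{(k)}}{p_i^{(l)}}=1$ for $k\ne l$ and $\iprod{p_i^{(k)}}{p_j^{(l)}}=0$ for $i\ne j$. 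The subspaces $V_i:=\spam\{p_i^{(1)},p_i^{(2)},p_i^{(3)}\}$ are therefore pairwise orthogonal and nonzero in $\R^3$ (else some in-pair inner product would vanish), so $\dim V_i=1$ for each $i$. Writing $p_i^{(k)}=\alpha_i^{(k)}v_i$ for a unit vector $v_i\in V_i$, the three relations $\alpha_i^{(k)}\alpha_i^{(l)}=1$ for distinct $k,l\in\{1,2,3\}$ collapse to $\alpha_i^{(1)}=\alpha_i^{(2)}=\alpha_i^{(3)}=\pm 1$. Absorbing the sign into $v_i$ yields $p_i^{(k)}=v_i$ with $v_1,v_2,v_3$ orthonormal, and an orthogonal change of coordinates on $P$ then reduces to $p_i^{(k)}=\ev_i$ for all $i,k$.

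With the frame pinned, I would complete the argument by showing that the submatrix $B^\#$ of $\xoverline{B}^\#$ indexed by $\mathcal{H}$ is a valid rank-$3$ completion of Shitov's original $B^*$ and then invoking Corollary~\ref{cor_shitov}. Off-diagonal entries agree automatically since $\xoverline{B}^*$ and $B^*$ use the same rule on $\mathcal{H}$. The only remaining concern is that $B^*$ may specify diagonal entries that $\xoverline{B}^*$ leaves unspecified. A diagonal $B^*(u,u)$ is specified as a constant $c$ precisely when $\|u\|^2$ is a constant polynomial; since each $u_i\in\sigma$ is a polynomial and a sum of squares of polynomials is constant iff each summand is constant, this forces $u$ to be a constant vector, in which case the fixed frame gives $(P_u)_i=\xoverline{B}^\#(u,\ev_i)=u_i$ and hence $\|P_u\|^2=\|u\|^2=c$, as required. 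The exceptional case $B^*(u,u)=0$ coming from $\|u\|^2\in F$ cannot arise for a feasible instance since $\|u\|^2\ge 1$ for every $u\in\mathcal{H}$, so such polynomials may be preprocessed out of $F$ without loss of generality.

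I expect the frame-fixing step to be the technical heart of the argument, and the triple (rather than double) copy of each $\ev_i$ in $\xoverline{\mathcal{H}}$ to be essential: with only two copies, the single relation $\alpha_i^{(1)}\alpha_i^{(2)}=1$ would leave an unrestricted scaling $\alpha_i^{(1)}=t_i,\ \alpha_i^{(2)}=1/t_i$, insufficient to pin the frame or to recover the specified diagonal values; the third copy forces $\alpha_i^{(k)}=\pm 1$ and thereby restores exactly the structural rigidity that Shitov had obtained from specified diagonal entries of $B^*$.
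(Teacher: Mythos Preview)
Your proof is correct and follows essentially the same approach as the paper: both use the extra copies of $\ev_i$ in $\mathcal{E}$ to pin down an orthonormal frame for the factorization, then read off the columns $P_u$ for constant $u\in\mathcal{H}$ via the specified off-diagonal entries against that frame, and finally invoke Shitov's Corollary~\ref{cor_shitov} on the restriction to $\mathcal{H}$. Your frame-fixing argument (orthogonal one-dimensional subspaces $V_i$ and the relations $\alpha_i^{(k)}\alpha_i^{(l)}=1$) is a clean geometric repackaging of what the paper states as Lemma~\ref{lem:uniquecompletion3by3}; you are also more explicit than the paper about why a specified diagonal entry $B^*(u,u)$ forces $u$ to be a constant vector and about the degenerate case $\|u\|^2\in F$, both of which the paper leaves implicit.
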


\begin{proof}
The forward direction is straightforward (and is the same as the forward direction in \cite{shitov2016hard}).  Suppose $f_1(\xiv)=\cdots=f_t(\xiv)=0$ for some $\xiv\in \R^n$.  Define $\xoverline{B}^\#:=(\xoverline{U}[\xiv])^{\top}\xoverline{U}[\xiv]$.  Then
$\xoverline{B}^\#$  defined in this manner is indeed a valid completion of $\xoverline{B}^*$.  In particular, $\xoverline{B}^*$ and $\xoverline{B}^\#$ agree in all specified entries corresponding to inner products of degree-0 3-vectors.  Since $f_i(\xiv)=0$ for all $i=1,\ldots,t$, the entries of $\xoverline{B}^\#$ corresponding to these polynomials are equal to 0 and hence equal to the specified entries of $\xoverline{B}^*$.

The remainder of the proof focuses on the reverse direction, and therefore, assume there exists a completion of $\xoverline{B}^*$ that is a rank-3 positive semidefinite matrix.  Let $\ev_{i}^{(j)}$ for $i=1,\ldots,3$, $j=1,\ldots,3$ denote the three copies of $\ev_{i}$  in $\xoverline{\mathcal{H}}$ (two in the extension $\mathcal{E}$ and one original in $\mathcal{H}$), and let $\mathcal{E}'$ be this set of nine entries of $\xoverline{\mathcal{H}}$.


Let $\xoverline{R}\in\R^{3 \times |\xoverline{\mathcal{H}|}}$ be such that $\xoverline{R}^{\top} \xoverline{R}$ is a completion of $\xoverline{B}^*$.
Note $\xoverline{B}^*({\mathcal{E}',\mathcal{E}'})$ has the form
$$\begin{bmatrix}
    B_1^* & 0 & 0  \\
    0 & B_2^* & 0  \\
    0 & 0 & B_3^*
\end{bmatrix} \text{ where } B_i^* := \begin{bmatrix}
    * & 1 & 1 \\ 
    1 & * & 1 \\
    1 & 1 & *
\end{bmatrix},\forall i \in \{1,2,3\}.$$
Since $ \xoverline{R}^{\top} \xoverline{R}$ is a rank-3 completion, this means each $B_i^*$ is completed to be a rank-1 matrix. 

\begin{lem}
There is a unique way to complete $B_i^*$ to a rank-$1$ semidefinite matrix, namely, setting each $*$ to be $1$.
\label{lem:uniquecompletion3by3}
\end{lem}

\begin{proof}
Let $B_i^\#$ be the rank-1 semidefinite completion of $B_i^*$. 
Suppose for a contradiction that $B_i^\#(1,1)<1$.  Then it must hold that $B_i^\#(2,2)>1$ because $\det(B_i^\#(1:2,1:2))\ge 0$.  Then a contradiction is obtained by considering whether $B_i^\#(3,3)\le 1$ or $B_i^\#(3,3)\ge 1$ as follows. If $B_i^\#(3,3)\le 1$ then one sees that $\det(B_i^\#([1,3],[1,3]))<0$.  If $B_i^\#(3,3)\ge 1$, then one sees that $\rank(B_i^\#(2:3,2:3))=2$.   Either case contradicts the assumption that $B_i^\#$ is positive semidefinite and rank-1.  A similar contradiction arises if we start by assuming $B_i^\#(1,1)>1$.  Finally, by symmetry among the three rows, the same contradiction arises if $B_i^\#(2,2)\ne 1$ or $B_i^\#(3,3)\ne 1$.
\end{proof}

Let $J\subseteq \xoverline{\mathcal{H}}$ denote the set $\{\ev_1^{(1)},\ev_2^{(1)},\ev_3^{(1)}\}$, where the superscript $^{(1)}$ without loss of generality indexes a copy of $\ev_i$ in the extension $\mathcal{E}$ rather than the original $\mathcal{H}$. By the analysis above, we have $(\xoverline{R}^{\top} \xoverline{R})(J,J) = I$. In this case, there exists an orthogonal 
matrix $\xoverline{C} \in \R^{3 \times 3}$ such that $(\xoverline{C} \xoverline{R} )(:,J) = I$ and $(\xoverline{C} \xoverline{R})^{\top} \xoverline{C} \xoverline{R} = \xoverline{R}^{\top} \xoverline{R}$. Thus we can assume
without loss of generality that $\xoverline{C}$ is the identity matrix. 
In this case, we have $\xoverline{R}({:,\ev_1^{(1)}}) = [1,0,0]^{\top},\xoverline{R}({:,\ev_2^{(1)}}) = [0,1,0]^{\top},\xoverline{R}({:,\ev_3^{(1)}}) = [0,0,1]^{\top}$.

Now, let $\uv \in {\mathcal{H}}$ such that $B^*(\uv,\uv)$ is a constant rather than `$*$'.  (Note that the previous sentence refers to the original $(\mathcal{H},B^*)$  rather than $(\xoverline{\mathcal{H}},\xoverline{B}^*)$.)  
We claim that $(\xoverline{R}^{\top} \xoverline{R})(\uv,\uv) = B^*(\uv,\uv)$.
Since each entry of $\uv$ is a constant and $B^*(\uv,\ev_i^{(1)})$ is an off-diagonal entry
of $B^*$, (since $\uv \in \mathcal{H}$ and $\ev_i^{(1)} \in \mathcal{E}$),
we have $B^*(\uv , \ev_1^{(1)} ) =  \xoverline{B}^*(\uv , \ev_1^{(1)}  ) = u_1$, $B^*(\uv, \ev_2^{(1)} ) =  \xoverline{B}^*(\uv , \ev_2^{(1)}  ) = u_2$, $B^*(\uv, \ev_3^{(1)} ) =  \xoverline{B}^*(\uv , \ev_3^{(1)}  ) = u_3$.
Since we also have $\xoverline{R}({:,\ev_1^{(1)}}) = [1,0,0]^{\top}$, $\xoverline{R}({:,{\ev_2^{(1)}}}) = [0,1,0]^{\top},\xoverline{R}({:,\ev_3^{(1)}}) = [0,0,1]^{\top}$ and the equality
$\xoverline{R}(:,\uv)^{\top}\xoverline{R}(:,\ev_i^{(1)})=\xoverline{B}^*(\uv,\ev_i^{(1)})$,
this establishes that $\xoverline{R}(:,\uv) = (u_1,u_2,u_3)$ and therefore $(\xoverline{R}^{\top} \xoverline{R})(\uv,\uv) = B^*(\uv,\uv)$.

In this case, we know that $(\xoverline{R}^{\top} \xoverline{R})({\mathcal{H},\mathcal{H}})$ is a valid rank-three completion of $B^*$.  This is because it was already a valid completion for $\xoverline{B}^*$, which encompasses all the specified (i.e., non-$*$) off-diagonal entries of $B^*$, and the argument in the last paragraph showed that it is also valid for the specified diagonal entries of $B^*$. Therefore by Lemma \ref{shitov thm}, this completion defines a
a solution of the system of polynomial equations $f_1 = 0, \ldots, f_t = 0$.
\end{proof}

\begin{rem}
\label{rm_double}
There exists an integral matrix $A \in \SS^n$ with $\diag(A) = \mathbf{0}$ and a positive integer $r$ such that every solution $\dv^*$ to $(\Pc_2)$ posed with data $(A,r)$ is doubly exponential in the size of the input. 
\end{rem}

\begin{proof}
Consider the polynomial system $x_1 = 2$ and $x_t = x_{t-1}^2$ for all $t \in \{2,3,\dots,n\}$. For this polynomial system, there is exactly one real solution and in this solution, $x_n = 2^{2^{n-1}}$. In Shitov's construction, some diagonal entries of the final completion are $1 + x_i^2,\forall i \in \{1,2,\dots,k\}$. This means the optimal solution of $(\Pc_2)$ with input $A$ can be doubly exponential in the size of input.
\end{proof}

The reduction presented in this section does not extend to showing the completeness of $(\Pc_1)$ for $\exists\R$.  The reason is that in order to accommodate doubly-exponential diagonal entries that could arise in the construction, the initial given $A$ would need to have equally large entries on the diagonal (since the formulation of $(\Pc_1)$ allows only subtraction of positive numbers from the diagonal).  These large entries require an exponential amount of space, and therefore the reduction overall would not be polynomial time.


\end{section}

\begin{section}{Conclusion}
Our algorithms for problems $(\Pc_1)$ and $(\Pc_2)$ run in polynomial time provided their optimal values (minimum rank of the positive semidefinite part of the decomposition) are bounded above by an absolute constant. However, since these algorithms require the solution of certain systems of multivariate polynomial equations (whose degrees, number of variables and number of constraints grow with that optimal value), for many instances of $(\Pc_1)$ and $(\Pc_2)$ these algorithms cannot be expected to be efficient in practice. One possible practical approach is to remove the part of the algorithms which involves solving nonlinear systems of multivariate polynomial equations and replace the enumeration of all subsets of size $r$ with a suitable data adaptive heuristic. Indeed, Algorithm~\ref{alg1} just requires solving a linear system of equations and a Cholesky decomposition. Algorithm~\ref{alg1} can be implemented for improved performance by exploiting special structures (e.g., sparsity, a priori information on ranks of certain submatrices) in the given problem instances.   

Our hardness results leave at least 
\textcolor{blue}{four} natural questions unanswered:
\begin{enumerate}
    \item Is $(\Pc_1)$ complete for $\exists\R$?
    \item Our completeness results establish equivalence for the feasibility question. Can these results be strengthened to show some kind of equivalence in terms of the solution sets (analogous to Mn\"{e}v's related result)? I.e., does every semialgebraic set arise as the solution set of $(\Pc_2)$?
    $(\Pc_3)$?
    \item Do problems $(\Pc_1)$ and $(\Pc_2)$ remain NP-hard when their data are restricted to $0,1$ matrices?
    \item \textcolor{blue}{Do the hardness results extend to number systems other than $\R$?  Note that Peeters' hardness proof applies to any field, and Shitov's applies to any commutative ring.  Since our results pertain to positive definiteness, we would also require, minimally, an ordering on the base field.}
\end{enumerate}

\end{section}

\section{Acknowledgement}
\textcolor{blue}{The authors are grateful to two anonymous referees for their careful reading of the paper and helpful comments that have significantly improved the presentation.}



\appendix
\renewcommand\d{{\bm{d}}}
\newcommand\bz{{\bm{0}}}
\newcommand\eps{\epsilon}

\section{NP-hardness of \texorpdfstring{$(\Pct_2)$}{Approx (P2)}}

\subsection{Restatement of \texorpdfstring{$(\Pct_2)$}{Approx (P2)}}

A polynomial $p(n)$ is fixed in advance.  One is given $B\in\SS^n$ such that $\diag(B)=\bz$, an integer $r$, and a number $\epsilon>0$.  Assume that there
exists a vector $\dv_0\ge \bz$ and matrix $H_0$ such that
$\Vert H_0\Vert_F\le \eps$,
$B+H_0+\Diag(\dv_0)\succeq 0$, and $\rank(B+H_0+\Diag(\dv_0))\le r$.  Find
a  vector $\dv\ge \bz$ such that there exists $H$ such that
$\Vert H\Vert_F\le p(n)\eps$,
$B+H+\Diag(\dv)\succeq 0$, and $\rank(B+H+\Diag(\dv))\le r$.

The construction in this appendix takes as input an undirected graph $\mathcal{G}$ and produces a matrix $B$, integer $r$, and number $\eps>0$.  If the graph is 3-colorable, then there exists $\dv$ and $H$ such that $B+H+\Diag(\dv)$ is semidefinite and has rank $r$, and $\Vert H\Vert_F\le \eps$.  Therefore, a candidate algorithm that correctly solves $(\Pct_2)$ must find some $H$ and $\dv$ satisfying
$B+H+\Diag(\dv)\succeq 0$, $\rank(B+H+\Diag(\dv))\le r$, and $\Vert H\Vert_F\le p(n)\eps$.  On the other hand, if $\mathcal{G}$ is not 3-colorable, our construction has the property that there is no ($H,\dv)$ satisfying the properties in the previous sentence, so the candidate algorithm that correctly solve $(\Pct_2)$ must report failure on such an instance.  In this way we prove that any candidate algorithm for $(\Pct_2)$ can solve the NP-hard problem of determining graph 3-colorability.

\subsection{Preliminary graph construction}
\label{subsec:robustcoloring}

Suppose we are given a graph $\mathcal{G}$ that we wish to test for 3-colorability.
Let $c>0$ be a fixed integer.
From $\mathcal{G}$ it is possible to construct a larger graph $\mathcal{G}'$ with the following
property.  If $\mathcal{G}$ is 3-colorable, then so is $\mathcal{G}'$.  However, if $\mathcal{G}$ is
not 3-colorable, then neither is any induced subgraph of $\mathcal{G}'$ whose number
of nodes is
$|\mathcal{V}(\mathcal{G}')|-c$ or larger.  In other words, $\mathcal{G}'$ is ``robustly'' non-3-colorable in the
sense that non-3-colorability is preserved  under deletion of up to $c$ nodes.

The construction of $\mathcal{G}'$ is as follows. Replace every node in $\mathcal{G}$ with
a gadget consisting of
$3(c+1)$ nodes. Connect these nodes with a complete 3-partite graph.
In other words, partition the nodes of the gadget into three sets of size
$(c+1)$ each, and then connect every node to all the nodes in the other two
partitions.  This requires a total of $3(c+1)^2$ edges.  The point is that in
any 3-coloring of the gadget, the three partitions must be three different colors,
and this property holds robustly under the deletion of any $c$ nodes.

Now, for each original node of $\mathcal{G}$,
pick one of the three partitions of its gadget to be ``exposed''.  For every edge $\{i,j\}$
of the original graph $\mathcal{G}$, join all $c+1$ exposed copies of $i$ to all
$c+1$ exposed copies of $j$, so that the original edge of $\mathcal{G}$ is replaced by
$(c+1)^2$ edges in $\mathcal{G}'$.  It is an easy exercise to show that this construction has the claimed property.

For the remainder of this appendix, we assume that, first, the input graph is replaced by
its Peeters supergraph, and second, the transformation described in this subsection has been applied to the supergraph.  We therefore dispense with the notation 
$\mathcal{G}'$ for the transformed graph.

\subsection{A linear algebraic lemma}



\begin{lem}
Let $A\in\SS^n$ be positive semidefinite, and suppose that all of its off-diagonal entries are negative.  Then $\rank(A)\ge n-1$. 
Furthermore:
\begin{itemize}
    \item 
    If $\rank(A)=n-1$, then $\Null(A)$ is spanned by a vector all of whose entries are positive.  Furthermore, no vector all of whose entries are positive can be in $\Range(A)$.
    \item
    If $\rank(A)=n$, then all entries of $A^{-1}$ are positive.
\end{itemize}
\label{lem:negoffdiagrank}
\end{lem}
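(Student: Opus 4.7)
The plan is to reduce everything to two elementary observations about $A$, then to handle the invertible case by a standard Stieltjes-matrix / Neumann-series argument. The first observation is a symmetrization inequality: for every $\xv\in\R^n$, $\xv^\top A\xv\ge|\xv|^\top A|\xv|$, where $|\xv|$ denotes the entrywise absolute value. This holds because the diagonal part is unchanged under $\xv\mapsto|\xv|$, while every off-diagonal term satisfies $A_{ij}x_ix_j\ge A_{ij}|x_i||x_j|$, using $A_{ij}<0$ and $x_ix_j\le|x_i||x_j|$. Combined with $A\succeq 0$, this forces $A|\xv|=\mathbf{0}$ whenever $\xv\in\Null(A)$. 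The second observation is that any nonzero nonnegative $\yv\in\Null(A)$ is strictly positive: if some $y_i=0$, the $i$-th row of $A\yv=\mathbf{0}$ reads $\sum_{j\ne i}A_{ij}y_j=0$ with every summand nonpositive (since $A_{ij}<0$ and $y_j\ge 0$), forcing $\yv=\mathbf{0}$, a contradiction.

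I would then deduce the rank bound and the first bullet from these two observations. If $\Null(A)\ne\{\mathbf{0}\}$, fix any nonzero $\vv\in\Null(A)$ and set $\uv:=|\vv|$; by the two observations $\uv\in\Null(A)$ and $\uv>\mathbf{0}$. For an arbitrary $\vv'\in\Null(A)$, take $s^*:=-\min_i v'_i/u_i$; then $\vv'+s^*\uv\in\Null(A)$ is nonnegative with at least one zero entry, so by the second observation it must vanish, giving $\vv'\in\spam(\uv)$. Hence $\dim\Null(A)\le 1$, i.e., $\rank(A)\ge n-1$, and $\Null(A)=\spam(\uv)$ with $\uv>\mathbf{0}$ when equality holds. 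The ``no positive vector in the range'' claim follows from $\Range(A)=\Null(A)^\perp$: no $\wv>\mathbf{0}$ can satisfy $\uv^\top\wv=0$ when $\uv>\mathbf{0}$.

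For the last bullet ($\rank(A)=n$), the plan is the standard symmetric-Stieltjes argument. Let $D$ denote the diagonal part of $A$ (necessarily positive definite, since diagonal entries of a positive definite matrix are positive) and write $A=D-N$, so that $N$ is entrywise nonnegative with zero diagonal and strictly positive off-diagonals. Setting $B:=D^{-1/2}ND^{-1/2}$, positive definiteness of $A$ gives $I-B\succ 0$, so every eigenvalue of the symmetric matrix $B$ is strictly less than $1$. Since $B$ is symmetric and entrywise nonnegative, Perron--Frobenius identifies $\rho(B)$ with $\lambda_{\max}(B)$, yielding $\rho(B)<1$, and by similarity $\rho(D^{-1}N)<1$. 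The Neumann series then gives $A^{-1}=\bigl(\sum_{k\ge 0}(D^{-1}N)^k\bigr)D^{-1}$, which is entrywise nonnegative; strict positivity of every entry already follows from the $k=0$ and $k=1$ terms, since $D^{-1}N$ has strictly positive off-diagonals and $D^{-1}$ has strictly positive diagonal. The step deserving the most care is the Perron--Frobenius identification $\rho(B)=\lambda_{\max}(B)$, but this is routine in the symmetric nonnegative setting; the rest amounts to sign tracking and the geometric series.
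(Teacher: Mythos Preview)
Your proof is correct and takes a genuinely different route from the paper's. The paper argues by induction on $n$: it peels off the first row and column, observes that the Schur complement $S=A'-\wv\wv^\top/\alpha$ is again positive semidefinite with strictly negative off-diagonals (because $\wv<\mathbf{0}$ makes $\wv\wv^\top/\alpha$ strictly positive), and then applies the induction hypothesis to $S$. Both the rank bound and the sign claims for $\Null(A)$ and $A^{-1}$ are then read off from the block formulas relating $A$ to $S$.

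Your approach avoids induction entirely. For the null-space part you use the symmetrization inequality $\xv^\top A\xv\ge|\xv|^\top A|\xv|$ together with the ``no zero coordinate'' observation for nonnegative null vectors; this is a clean and self-contained argument that directly yields $\dim\Null(A)\le 1$ and the positivity of the spanning vector. For the invertible case you invoke the standard $M$-matrix/Stieltjes machinery (splitting $A=D-N$, Perron--Frobenius on $B=D^{-1/2}ND^{-1/2}$, and the Neumann series). The trade-off is that the paper's inductive Schur-complement proof is fully self-contained and uses only the block-inverse formula already set up for other purposes in the paper, whereas your argument is more conceptual but leans on Perron--Frobenius as an external ingredient. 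Your identification $\rho(B)=\lambda_{\max}(B)$ is indeed the only delicate point, and it goes through because Perron--Frobenius guarantees $\rho(B)$ is itself an eigenvalue of the symmetric matrix $B$, hence the largest one.
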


\begin{proof}
The proof is by induction on $n$.  The base case $n=1$ is obvious.  Now suppose the claim holds for $n-1$.   Let us rewrite
$$A=\left(
\begin{array}{cc}
\alpha & \wv^{\top} \\
\wv & A'
\end{array}
\right),
$$
in which $\wv<\mathbf{0}$, and $A'\in\SS^{n-1}$ is semidefinite with negative off-diagonal entries.  We also know $\alpha>0$ by Lemma~\ref{lem:rangeA1} since $\wv\ne\mathbf{0}$.  Therefore, the Schur complement $S:=A'-\wv\wv^{\top}/\alpha$ exists and is semidefinite.
 However, observe that the subtracted term $\wv\wv^{\top}/\alpha$ is positive in all entries by the sign assumption.  Since $A'$ previously had negative off-diagonal entries, it follows that $S$ must also have this property.  Therefore, by the induction hypothesis, $\rank(S)\ge n-2$, hence $\rank(A)\ge n-1$.

Next, consider two subcases based on whether $\rank(S)=n-2$ or $\rank(S)=n-1$.  
In the first subcase, by induction, the 1-dimensional null space of $S$ is spanned by some $\vv>\bz$.  
Let $\bar\vv=[-\wv^{\top}\vv/\alpha; \vv]$.  Then one checks by multiplying out that $\bar\vv$ is in the null space of $A$. Vector $\bar\vv$ has all positive entries.  Next,  for a symmetric matrix, the range space and null space are orthogonal complements.  Since the null space is spanned by an all-positive vector, it is not possible for an all-positive vector to be in the range (because the inner product of two positive vectors is positive rather than 0).

The other subcase is that $\rank(S)=n-1$.  Then one checks by multiplying out that
$$A^{-1}=\left(\begin{array}{cc}
1/\alpha + \wv^{\top}S^{-1}\wv/\alpha^2 & -\wv^{\top}S^{-1}/\alpha \\
-S^{-1}\wv/\alpha & S^{-1}
\end{array}\right)$$
By the assumptions that $\wv<\bz$ and induction hypothesis
that $S^{-1}>0$, one confirms that all four blocks of this matrix are positive.
\end{proof}

\subsection{Construction of \texorpdfstring{$B$}{B} and \texorpdfstring{$r$}{r}}

The reduction to show is $(\Pct_2)$ is NP-hard is again from graph 3-coloring and is presented in this subsection. Let $\cG$ be the graph to be tested for
3-colorability.  Based on the construction in \ref{subsec:robustcoloring},
we will assume that it is 3-colorable with robustness parameter $c$ to be
determined later.
We define
\begin{equation}
B=\left(
\begin{array}{ccc}
D & K^{\top} \\
K & A
\end{array}
\right),
\label{eq:Bdef_appdx}
\end{equation}
where the blocks $D,K,A$ will be defined in the
remainder of this subsection.  The entries of these matrices will depend on a `large' parameter $s>0$ and `small' parameter $\delta>0$.  The precise value of $\delta$ is given in \eqref{eq:deltadef} below.  The value of $s$ is not precisely specified; instead, $s$ must be chosen large enough so that several inequalities involving the other parameters hold.  All of these inequalities are lower bounds on $s$, so there is no possibility of conflicting inequalities for $s$.  \textcolor{blue}{The valid range of parameter $\epsilon$ (which appears in the statement of $(\Pct_2)$) is determined by \eqref{eq:epsbdappdx}.}

First, block $A$ is the same as the block $A$ that
was constructed before Theorem~\ref{thm_P1_tilda} used for both the $(\Pc_1)$ and $(\Pc_2)$ input constructions.  
Thus, $A\in\R^{3n\times 3n}$, where for this appendix $n:=|\cV(\cG)|$.

As earlier, use the term ``nonedge'' to refer to a pair
$\{i,j\}\in \cK(\cV(\cG))\setminus \cE(\cG)$.  In other words, $\{i,j\}\in \cE(\bar{\cG})$, the complement of $\cG$.
Let $\bar m$ denote the number of nonedges, which is equal
to $\choose{n}{2}-|\cE(\cG)|$.


Next, we turn to block $K$,
which has nine columns per nonedge of $\cG$, i.e., $9\bar{m}$ columns.
Focus on one of those nine
columns, say column $u$, for one particular nonedge $\{i,j\}\in \cE(\bar{\cG})$.  This row is in correspondence
with a particular `2' and its symmetric partner in $A$ (out of the nine pairs that correspond to $\{i,j\}$),
say the `2' that appears in position $(k,l)$ and $(l,k)$ of
$A$.    Then
$$
K(t,u)=\left\{
\begin{array}{ll}
s, & \mbox{if $t=k$ or $t=l$}, \\
s\delta, & \mbox{else.}
\end{array}
\right.
$$
Thus, $K$ has two kinds of entries, `small' (namely, $s\delta$) and `large' (namely $s$) with exactly two large entries per column.
In what follows,
we write $K=K^{\rm lg}+K^{\rm sm}$ with the large and small entries
respectively.

The parameter denoted ``$p(n)$'' in the statement of $(\Pct_2)$ is actually $p(9\bar{m}+3n)$ in the context of the construction of this section.  Since the argument of $p(\cdot)$ does not change in this section, we will abbreviate $p(9\bar m+3n)$ as $\hat p$.  We will assume that $\hat p>1$ throughout.

Define $D$ to be a $9\bar m\times 9\bar m$ matrix all of whose off-diagonal entries equal $-2\hat p \eps$ and whose diagonal entries are 0's.
Finally, define the rank cutoff $r:=9\bar m + 3$. 

\textcolor{blue}{The main theorem about this construction is as follows. Its proof occupies the remainder of this appendix. This theorem shows that graph 3-colorability can be reduced to $(\Pct_2)$, and thus the latter is NP-hard.}

\begin{thm}
\textcolor{blue}{
Given a graph $\cG$, form $\cG'$ as in Section~\ref{subsec:robustcoloring}, and from $\cG'$ form $B$ given by \eqref{eq:Bdef_appdx}.  If $\cG$ is 3-colorable, then there exists an $H\in\SS^{9\bar{m}+3n}$ such that $\Vert H\Vert_F\le \eps$ and a vector $\dv\in\R^{9\bar m + 3n}$ such that $\rank(B+\Diag(\dv)+H)\le 9\bar m + 3$.  On the other hand, if $\cG$ is not 3-colorable, then for any $H$ such that $\Vert H\Vert_F\le \hat p\eps$ and for any vector $\dv$, $\rank(B+\Diag(\dv)+H)\ge 9\bar m + 4$.
}
\label{thm:p2til_nphard}
\end{thm}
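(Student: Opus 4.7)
The plan is to establish the two directions separately, with the backward direction carrying essentially all of the technical work. For the forward direction, suppose $\cG$ is 3-colorable and fix a 3-coloring $c$ extending to the (transformed) Peeters supergraph. I will take $H = 0$ (so $\|H\|_F = 0 \le \eps$ trivially) and exhibit an explicit $\dv = [\dv^{(1)}; \dv^{(2)}]$ achieving $\rank(B + \Diag(\dv)) \le 9\bar m + 3$. Since $D = -2\hat p\eps(\mathbf{1}\mathbf{1}^\top - I)$ is a rank-one update of a scalar matrix, the choice $\dv^{(1)} = \beta\mathbf{1}$ for a suitable $\beta > 0$ makes $D + \Diag(\dv^{(1)})$ positive definite with explicitly computable inverse. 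The Schur complement of this $(1,1)$ block is $A + \Diag(\dv^{(2)}) - K(D + \Diag(\dv^{(1)}))^{-1}K^\top$; for $s$ sufficiently large one tunes $s, \delta, \beta, \dv^{(2)}$ so that the $s^2$-scale contributions from $K(\cdot)K^\top$ at nonedge positions cancel the $2$-entries of $A$ and produce the $0$/$1$ pattern of \eqref{eq:tiblocks} induced by $c$, yielding a rank-three PSD Schur complement.

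For the backward direction, suppose $(H, \dv)$ exists with $\|H\|_F \le \hat p\eps$, $B + \Diag(\dv) + H \succeq 0$, and $\rank(B + \Diag(\dv) + H) \le 9\bar m + 3$; I will deduce that $\cG$ is 3-colorable. Partition $H$ and $\dv$ in block form matching $B$. The first substep bounds the rank of the $(1,1)$ block $D_1 := D + \Diag(\dv^{(1)}) + H_1$: every off-diagonal entry satisfies $(D_1)_{ij} \le -2\hat p\eps + |(H_1)_{ij}| \le -\hat p\eps < 0$, so $D_1$ is a symmetric PSD matrix with strictly negative off-diagonals and Lemma \ref{lem:negoffdiagrank} gives $\rank(D_1) \in \{9\bar m - 1, 9\bar m\}$. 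To rule out the smaller value, suppose $\Null(D_1)$ were spanned by some $\vv > \mathbf{0}$; Lemma \ref{lem:rangeA1} would force $(K + H_2)\vv = \mathbf{0}$, but $K\vv$ has every entry at least $s\delta\|\vv\|_1$ while $\|H_2\vv\|_2 \le \hat p\eps\|\vv\|_2$, a contradiction once $s\delta$ dominates $\hat p\eps$. Hence $D_1$ is invertible with $D_1^{-1}$ having all positive entries, and Lemma \ref{lemma_schur} converts the rank constraint into $\rank(S) \le 3$, $S \succeq 0$, where $S := A + \Diag(\dv^{(2)}) + H_3 - (K + H_2) D_1^{-1} (K + H_2)^\top$.

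The second substep reduces $S$ to the form to which Theorem \ref{thm:tilde_p3_is_nphard} applies. I decompose $K = sM + s\delta(\mathbf{1}\mathbf{1}^\top - M)$, where $M \in \{0,1\}^{3n \times 9\bar m}$ indicates the designated nonedge positions, and expand $(K + H_2) D_1^{-1} (K + H_2)^\top$ into terms scaling as $s^2$, $s^2\delta$, $s^2\delta^2$, and perturbation terms involving $H_1, H_2$. With $\delta$ chosen sufficiently small in terms of $s$ and $\eps$ bounded appropriately, all terms except the leading $s^2 M D_1^{-1} M^\top$ contribute at most $\eps_0$ in Frobenius norm, where $\eps_0$ is the universal constant from Theorem \ref{thm:tilde_p3_is_nphard}. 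This lets me write $S = A_0 + L + H'$ with $A_0$ the gadget matrix of Theorem \ref{thm:tilde_p3_is_nphard}, $L$ fitting the edge set $\mathcal{X}$ of the Peeters construction, and $\|H'\|_F \le \eps_0$. Theorem \ref{thm:tilde_p3_is_nphard} then yields a 3-coloring of the Peeters supergraph, hence of $\cG$.

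The hardest part will be the last substep's bookkeeping: the $s\delta$-scale entries of $K$ contribute to \emph{every} position of $KD_1^{-1}K^\top$, including the specified-zero positions of $A$ at Peeters-supergraph edges and intra-block off-diagonals, so controlling these parasitic contributions within tolerance $\eps_0$ requires a delicate coupling of $s$, $\delta$, $\eps$, and $\hat p$. The $-2\hat p\eps$ off-diagonals of $D$ are precisely what prevents $D_1^{-1}$ from being too large in any direction, via the positive-entry property of Lemma \ref{lem:negoffdiagrank}, while the robustness transformation of Section \ref{subsec:robustcoloring} absorbs a bounded number of positions where the tuning is imperfect so that the implied 3-coloring on the remaining vertices still descends to the original graph.
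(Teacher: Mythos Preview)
Your proposal has real gaps in both directions.

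\textbf{Forward direction.} Taking $H=0$ and a constant $\dv^{(1)}=\beta\mathbf{1}$ cannot work. With $\beta$ independent of color, the Schur-complement update $K(D+\beta I)^{-1}K^\top$ produces (to leading order) the \emph{same} value at every nonedge position $(k,l)$, so you cannot obtain the color-dependent $0/1$ pattern of \eqref{eq:tiblocks} that is required for rank~$3$. Moreover, $s$ and $\delta$ are part of the construction of $B$; they are fixed before the adversary/solver sees the instance and cannot be ``tuned'' at this stage. In the paper the entries of $\dv^\newedge$ are $s^2$ or $s^2/2$ depending on whether the endpoints share a color, and even then the small terms $K^{\rm sm}$ and the $-2\hat p\eps$ off-diagonals of $D$ leave an $O(\eps)$ discrepancy in the Schur complement that must be absorbed by a nonzero $H_3$; setting $H=0$ makes the Schur complement only \emph{close} to rank~$3$, not exactly rank~$3$.

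\textbf{Backward direction.} The crucial missing idea is that there is no a~priori upper bound on $\Vert D_1^{-1}\Vert$: the adversary chooses $\dv^{(1)}$ and may make some diagonal entries of $D_1$ arbitrarily small. Lemma~\ref{lem:negoffdiagrank} gives positivity of $D_1^{-1}$, not boundedness, so your assertion that the $-2\hat p\eps$ off-diagonals ``prevent $D_1^{-1}$ from being too large'' is unfounded, and the expansion of $(K+H_2)D_1^{-1}(K+H_2)^\top$ into terms you can dominate by $\eps_0$ does not go through. The paper addresses exactly this obstruction by a case split on the diagonal entries $\Pi(u,u)$ of $D_1^{-1}$: if some $\Pi(u^*,u^*)$ is ``very large'' (Case~1) one exhibits a $5\times 5$ principal submatrix of $S$ with all negative off-diagonals and invokes Lemma~\ref{lem:negoffdiagrank} directly; if all $\Pi(u,u)$ are bounded but many are still ``large'' (Case~2b) an iterated Schur-complement/covering argument shows $\rank(S)\ge 4$; only when the large pivots can be covered by $c$ nodes (Case~2a) does one delete those nodes, use robust non-$3$-colorability, and reduce to Theorem~\ref{thm:tilde_p3_is_nphard}. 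Your last paragraph gestures toward robustness but does not supply this case analysis, and in Cases~1 and~2b no reduction to Theorem~\ref{thm:tilde_p3_is_nphard} is available at all.
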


\subsection{Rank when \texorpdfstring{$\cG$}{G} is 3-colorable}

We argue that if $\cG$ is 3-colorable, then there exists a nonnegative $\dv\in\R^{9\bar{m}+3n}$ and perturbation
$H\in \SS^{9\bar m + 3n}$ such that $\Vert H\Vert_F\le\epsilon$ and such that
$\rank(B+H+\Diag(\dv))\le 9\bar m +3$.
Write $\dv=[\dv^\newedge,\dv^\newvertix]$, and partition
$$H=\left(\begin{array}{cc}
H_1 & H_2^{\top} \\
H_2 & H_3
\end{array}
\right)
$$
conformally with $B$.
Each nonedge of $\cG$ corresponds to nine entries of $\dv^\newedge$.  Consider diagonal entry $u\in\{1,\ldots,9\bar{m}\}$ corresponding to nonedge $\{i,j\}$.  Then we choose
$$d^\newedge(u)=
\left\{
\begin{array}{ll}
s^2/2 & \mbox{if $i$, $j$ have different colors}, \\
s^2 & \mbox{else.}
\end{array}
\right.
$$
We choose $H_1=0$, $H_2=0$, and $H_3$ will be specified below.

The goal is to show that we can choose $\dv^{\cV}$ and $H_3$ so that the Schur complement of the $(1,1)$ block of $B+H+\Diag(\dv)$, which is,
$$S:=A-K(D+\Diag(\dv^\newedge))^{-1}K^{\top}+\Diag(\dv^{\cV})+H_3,$$
is positive semidefinite and of rank 3. 
We write as $S=S'+\Diag(\dv^\cV)+H_3$ where 
$$S':=A-K(D+\Diag(\dv^\newedge))^{-1}K^{\top}.$$
 We begin by considering 
$$S'':=A-K^{\rm lg}\Diag(\dv^\newedge)^{-1}(K^{\rm lg})^{\top},$$
and below we apply Lemma~\ref{lem:KDK_lemma} to bound $\Vert S'-S''\Vert$.

Consider some entry $(k,l)$ of $A$ corresponding to nonedge $\{i,j\}$ such that $i,j$ have different colors.  The entry $(k,l)$ of $A$, which equals 2, in turn corresponds to a column $u$ of $K$.  Entry $(k,l)$ of the Schur complement update $K^{\rm lg}\Diag(\dv^\newedge)^{-1}(K^{\rm lg})^{\top}$ is determined by column $u$ of $K^{\rm lg}$ because no other column of $K^{\rm lg}$ has nonzero entries in both positions $k,l$.   
In this case, $K^{\rm lg}(k,u)d^\newedge(u)^{-1}K^{\rm lg}(l,u)=
s(s^2/2)^{-1}s=2$, hence $S''(k,l)=2-2=0$.  On the other hand, if $i,j$ have the same color, then $K^{\rm lg}(k,u)d^\newedge(u)^{-1}K^{\rm lg}(k,u)=
s(s^2)^{-1}s=1$ and hence $S''(k,l)=2-1=1$.  Therefore, all off-diagonal entries of $S''$ corresponding to nonedges in the same color class are 1's, while those corresponding to different color classes are 0's.  Entries corresponding to edges are 0's by construction and are not updated by $K^{\rm lg}\Diag(\dv^\newedge)^{-1}(K^{\rm lg})^{\top}$.  Finally, entries corresponding to the same vertex but different representatives (i.e., off-diagonal entries of the $3\times 3$ principal diagonal submatrices) are also 1's by construction and are not updated.  Thus, the off-diagonal entries of $S''$ consist of three disjoint blocks of all 1's.

Next, Lemma~\ref{lem:KDK_lemma} states that
\begin{equation}
\Vert S''-S'\Vert_F 
\le
\frac{1}{2} \Vert \Diag(\dv^\newedge)^{-1}\Vert_F\cdot \Vert K^{\rm lg}\Vert_F
\cdot \left(9 \Vert K^{\rm lg}\Vert_F \cdot \Vert \Diag(\dv^\newedge)^{-1}\Vert_F \cdot \Vert D\Vert_F
+ 5\Vert K^{\rm sm}\Vert_F\right).
\label{eq:ssprime1}
\end{equation}
Note that the two hypotheses of the lemma can be confirmed using the formula \eqref{eq:deltadef} below and the assumption that $s$ is sufficiently large.
We have the following straightforward upper bounds based on
construction of $K,\dv^\newedge,D$ and the fact that 
$\Vert A\Vert_F\le \Vert A\Vert_{\rm max}\cdot \sqrt{\mathrm{nnz}(A)}$:
\begin{align*}
    \Vert K^{\rm lg}\Vert_F &\le \sqrt{18\bar{m}}s, \\
    \Vert K^{\rm sm}\Vert_F &\le \sqrt{27\bar{m}n}s\delta, \\
    \Vert \Diag(\dv^\newedge)^{-1}\Vert_F &\le 18\bar{m}/s^2, \\
    \Vert D\Vert_F &\le 6\bar{m}\hat p \eps.
\end{align*}
Substituting these bounds in \eqref{eq:ssprime1} yields
\begin{align}
\Vert S'-S''\Vert_F 
&\le 27\sqrt{2} {\bar m}^{3/2}s^{-1}\left(15\sqrt{3\bar{m}n}s\delta+2916 \sqrt{2}\bar{m}^{5/2}\hat p s^{-1} \eps\right) \notag\\
&= 3^4 \cdot 5 \sqrt{6}{\bar m}^2n^{1/2}\delta + 2^3 \cdot 3^9 \cdot \bar{m}^{4}\hat p\eps/s^2.\label{eq:sprimesprimeprime}
\end{align}
We can ensure both terms are at most $\eps/2$ by defining
\begin{equation}
  \delta:=\frac{\epsilon}{10 \cdot 3^4 \cdot \sqrt{6} {\bar m}^2 n^{1/2}},
  \label{eq:deltadef}
\end{equation}
and choosing $s$ sufficiently large to upper bound the second term of \eqref{eq:sprimesprimeprime} also by $\eps/2$.
Since $\Vert S'-S''\Vert_F\le \eps$, we define $H_3$ to equal $S''-S'$ in off-diagonal entries, and equal to 0 on the diagonal.   In this way $S'+H_3$ is a matrix whose off-diagonal entries are 0's and 1's, and the 1's are arranged in three disjoint square blocks.  Observe that the diagonal entries of $S'+H_3$ are all negative since the diagonal entries of $A$ and $H_3$ are 0's, while the diagonal entries of $-K(D+\Diag(\dv^\newedge))^{-1}K^{\top}$ are negative.  Therefore, there is some positive vector $\dv^\newvertix$ such that the diagonal entries of $S'+H_3+\Diag(\dv^\newvertix)$ are all 1's.  This matrix is positive semidefinite, its rank is 3, and it is the Schur complement after eliminating the $(1,1)$ block of $B+\Diag(\dv)+H$.

\subsection{Rank when \texorpdfstring{$\cG$}{G} is not 3-colorable}

The hypothesis of this
subsection is that $\cG$ is not 3-colorable, and therefore, by robustness described in \ref{subsec:robustcoloring},
neither is any induced subgraph with at least $|\cV(\cG)|-c$ nodes, where $c$
is to be determined.  We 
prove that $\rank(B)\ge 9\bar m+4$ for any perturbation of size $\hat{p}\cdot \eps$
plus any diagonal
matrix that yields a positive semidefinite matrix.  For this section, let $H$ be a perturbation to $B$, and let $\dv$ be a vector, and suppose that $\Vert H\Vert_F\le\hat{p}\eps$ and $B+H+\Diag(\dv)\succeq 0$.  We will argue that $\rank(B+H+\Diag(\dv))\ge 9\bar m + 4$.

As in the previous section, partition $H$ and $\dv$ conformally with $B$.  The first aim of this subsection is to show that the $(1,1)$ block of $B+H+\Diag(\dv)$, namely, $D+H_1+\Diag(\dv^\newedge)$ is invertible.  Once this is proved, the main task of this subsection is to show that rank of the Schur complement, that is, the rank of 
\begin{equation}
S:=A+H_3-(K+H_2)(D+\Diag(\dv^\newedge)+H_1)^{-1}(K+H_2)^{\top}+\Diag(\dv^V)
\label{eq:Schur1}
\end{equation}
is at least 4.  

We first observe that since $\Vert H_1\Vert_F\le \hat p\eps$, all off-diagonal entries of $D+\Diag(\dv^\newedge)+H_1$ are negative because $H_1$ is not large enough to cancel the negative entries of $D$.   This means that $\rank(D+\Diag(\dv^\newedge)+H_1)\ge 9\bar{m}-1$ by Lemma~\ref{lem:negoffdiagrank}.  In fact, we claim more strongly that $\rank(D+\Diag(\dv^\newedge)+H_1)= 9\bar{m}$.  Consider summing all columns of $K^{\top}$ to yield a $9\bar{m}$-length vector $K^{\top}\mathbf{e}$.  Since each row of $K^{\top}$ has exactly two entries equal to $s$ and $3n-2$ entries equal to $s\delta$, each entry of $K^{\top}\mathbf{e}$ is exactly $(2+(3n-2)\delta)s$. 
Thus, each entry of $K^{\top}\mathbf{e}$ is at least $2s$.  Then each entry of $(K+H_2)^{\top}\mathbf{e}$ is at least $1.9s$ since $\Vert H_2\Vert_F\le \eps$, and $\eps\le 1$ is much smaller than $s$ (i.e., choose $s$ large enough to ensure this).  Since $B+H+\Diag(\dv)$ is positive semidefinite, it follows from Lemma~\ref{lem:rangeA1} that $(K+H_2)^{\top}\mathbf{e}\in \Range(D+\Diag(\dv^\newedge)+H_1)$.  Since this vector $(K+H_2)^{\top}\mathbf{e}$ has all positive entries, it then follows from Lemma~\ref{lem:negoffdiagrank} that $\rank(D+\Diag(\dv^\newedge)+H_1)=9\bar{m}$.

Thus, for the rest of this analysis, $S$ given by \eqref{eq:Schur1} is well-defined, and we define $\Pi:=(D+\Diag(\dv^\newedge)+H_1)^{-1}$, which is a factor appearing in the formula for $S$.  By Lemma~\ref{lem:negoffdiagrank}, $\Pi>0$.  As noted in Section~\ref{sec:np-hard}, the hurdle in the analysis of $B+H+\Diag(\dv)$ is that there is no prior lower bound on the entries of $\dv^\newedge$, in other words, no prior upper bound on $\Vert \Pi\Vert_F$.  Therefore, we take cases depending on the sizes of diagonal entries of $\Pi$.  (Recall that the largest entry of a positive semidefinite matrix always appears on the diagonal.)

\underline{Case 1, For at least one $u^*\in\{1,\ldots,9\bar{m}\}$, 
$\Pi(u^*,u^*)\ge  \pi_1$}. Here,
\begin{equation}
    \pi_1=
\frac{2\hat p\eps}{s^2\delta^2}.
\label{eq:pi1_def}
\end{equation}

Let us denote the entry of $A$ corresponding to $u^*$ as $A(k,l)$.  Therefore, $A(k,l)=2$, and $(k,l)$ corresponds to a nonedge $\{i,j\}$ of $\cG$.  We claim entries in columns $k$ and $l$ of $A-(K+H_2)\Pi(K+H_2)^{\top}$ are negative.  The update $(K+H_2)\Pi(K+H_2)^{\top}$ may be written as a sum of rank-one matrices:
$$(K+H_2)\Pi(K+H_2)^{\top}=\sum_{u=1}^{9\bar m}\sum_{u'=1}^{9\bar m}(K+H_2)(:,u)\Pi(u,u')(K+H_2)(:,u')^{\top}.$$
Consider one term, namely,
$(K+H_2)(:,u^*)\Pi(u^*,u^*)(K+H_2)(:,u^*)^{\top}$.  Note that $(K+H_2)({:,u^*})$ has entries at least $0.9s$ in positions $k,l$ and entries at least $0.9s\delta$ in the remaining positions because $K$ has entries $s$ and $s\delta$ in these positions, and $H_2$ is much smaller assuming $s$ is chosen sufficiently large.  Therefore, every entry in row $k$ of 
$(K+H_2)({:,u^*})\Pi(u^*,u^*)(K+H_2)({:,u^*})^{\top}$ is at least $0.9s\cdot\pi_1\cdot 0.9s\delta=1.62s^2\delta(\hat p\eps)/(s^2\delta^2)=1.62\hat p\eps/\delta>3$, by assumption that $\hat p\ge 1$ and $\eps/\delta>10$ by \eqref{eq:deltadef}. This is just one term in the update, but since all $(9\bar m)^2$ terms are positive matrices, the remaining terms can only further increase the overall product.  Thus, since $A(k,l')\le 2$ for all $l'=1,\ldots,3n$, and the $(k,l')$ entry of the update is at least 3, we conclude that the $(k,l')$ entry of $A-(K+H_2)\Pi(K+H_2)^{\top}$ is bounded above by $-1$.  This same analysis applies to the $(l,l')$ entry for all $l'=1,\ldots,3n$.

Next, consider an entry $A(k',l')$ corresponding to an edge $\{i',j'\}$ in $\cG$.  By our construction, $A(k',l')=0$.  Observe that entry $(k',l')$ of $(K+H_2)\Pi(K+H_2)^{\top}$ is at least $(0.9s\delta)^2\pi_1=1.62s^2\delta^2(\hat p\eps)/(s^2\delta^2)=1.62\hat p\eps$.  This is seen by considering the same term used in the previous paragraph, namely, entry $(k',l')$ of $(K+H_2)(:,u^*)\Pi(u^*,u^*)(K+H_2)(:,u^*)^{\top}$, noting that the other terms can only increase the update.   Thus, the $(k',l')$ entry of $A -(K+H_2)\Pi(K+H_2)^{\top}$ is bounded above by $-1.62\hat p\eps$. 

Since the off-diagonal entries of $S$ (given by \eqref{eq:Schur1}) equal $A-(K+H_2)\Pi(K+H_2)^{\top}+H_3$ and $\Vert H_3\Vert_F\le \hat p\eps$, we conclude that all entries of $S$ analyzed in the previous few paragraphs are negative.  In other words, all entries of $S$ in columns $k$ and $l$ and all off-diagonal entries corresponding to edges of $\cG$ are negative.  

Find a 3-cycle in $\cG$ that does not include either $i$ or $j$, say
$i_1,i_2,i_3\in \cV(\cG)$.  Note that $\cG$ has many 3-cycles if we first
apply the robustifying transformation described in \ref{subsec:robustcoloring}, so certainly
such a cycle can be found. Each of these three nodes corresponds in turn
to three
rows of $A$; let $l_1,l_2,l_3$ be three such
rows of $A$ (choose among the three representatives
of each of $i_1,i_2,i_3$ arbitrarily).  Now consider the $5\times 5$
principal submatrix
of $S$ indexed by $k,l,l_1,l_2,l_3$.  All the off-diagonal entries in
this matrix are negative as argued in the previous
paragraph.  Therefore, the rank of this submatrix is at least 4 by Lemma~\ref{lem:negoffdiagrank}.
This concludes the analysis of Case 1.

\underline{Case 2, $\Pi(u,u')<\pi_1$, for all $u,u'=1,\ldots,9\bar{m}$.}  As in Case 1, $\pi_1$ is given by \eqref{eq:pi1_def}. 

The assumption of this case allows us to derive a stronger upper bound than $\pi_1$ on the off-diagonal entries of $\Pi$.  We already know that $\Pi$ is positive definite and positive.  Recall from the definition of $\Pi$ that $\Pi(D+\Diag(\dv^\newedge)+H_1)=I$.  Focusing on a diagonal entry of this product, we know that for each $u=1,\ldots,9\bar m$, 
$$\sum_{u'=1}^{9\bar m} \Pi(u,u')(D(u,u')+H_1(u,u')) + \Pi(u,u)d(u) = 1.$$
Since $\Pi(u,u')\le\max(\Pi(u,u),\Pi(u',u'))<\pi_1=2\hat p\eps/(s^2\delta^2)$, $|D(u,u')|\le 2\hat p\eps$, and $|H_1(u,u')|\le \hat p\eps$, we conclude that each term in the above summation is at most $6\hat p^2\eps^2/(s^2\delta^2)$ in magnitude.
By choosing $s$ sufficiently large,
we are assured that every term in the summation is at most $1/(90\bar m)$ in magnitude.  Then,
we conclude that the first summation is at most $1/10$ in magnitude, and therefore $\Pi(u,u)d(u)\ge 0.9$ so $d(u)\ge 0.9/\pi_1$ for each $u=1,\ldots,9\bar m$.

Now consider an off-diagonal entry of the product
$\Pi(D+\Diag(\dv^\newedge)+H_1)$, say entry $(u,u')$ to obtain
$$\sum_{u''=1}^{9\bar m} \Pi(u,u'')(D(u,u'')+H_1(u,u''))+\Pi(u,u')d(u')=0.$$
The magnitude of each term in the summation is at most $6\hat p^2\eps^2/(s^2\delta^2)$ as in the last paragraph.  By choosing $s$ sufficiently large, we can ensure that this is at most $\delta^2/(90\bar m)$,
so the summation has magnitude at most $\delta^2/10$.   Also, $d(u')\ge 0.9/\pi_1$ as shown in the last paragraph.  Therefore,
$$\Pi(u,u') \le \frac{\delta^2}{10\cdot 0.9/\pi_1},$$
for each $u,u'=1,\ldots,9\bar m$ with $u\ne u'$, which
implies that  $\Pi(u,u')\le \delta^2\pi_1/9$,
which is stronger than the upper bound of $\pi_1$ that holds by the assumption of this case.

Next, let us define two ranges of $\Pi(u,u)$ for $u=1,\ldots,9\bar m$.
\begin{itemize}
    \item
    {\em Small} entries satisfy
    $\Pi(u,u)\in(0,\pi_0]$.
    \item 
    {\em Large} entries satisfy
    $\Pi(u,u)\in (\pi_0,\pi_1)$.
\end{itemize}
Here, 
\begin{equation}
\pi_0=\frac{\eps_0}{27 \bar m n s^2\delta}.
\label{eq:pi0def}
\end{equation}
In this formula, $\eps_0$ is the universal constant introduced in Theorem~\ref{thm:tilde_p3_is_nphard}.
Note that the case that $\Pi(u,u)>\pi_1$ (which could be called ``very large'') was already handled
by Case 1.

We now form a graph $\cG_{\rm lg}$ defined as follows.  The nodes $\newvertix(\cG_{\rm lg})$ of the graph are integers $1,\ldots,3n$ in correspondence with the rows/columns of $A$.  Given a node in $\cG$,
we will say that it ``owns'' the three nodes of $\cG_{\rm lg}$ associated
with that original node.
Let $k,l$ be a pair of nodes of $\cG_{\rm lg}$, $k\ne l$, that correspond to a nonedge of $\cG$.  In this case, there is an index $u\in\{1,\ldots,9m\}$ that corresponds to the pair $(k,l)$.  Include the edge $\{k,l\}$ in $\cE(
\cG_{\rm lg})$ whenever $\Pi(u,u)$ is large according to the above dichotomy.

We now take two subcases of Case 2.

\underline{Case 2a, the edges of $\cG_{\rm lg}$ can
  be covered by $c$ nodes of $\cG_{\rm lg}$.}
  
Here, $c$ is the constant that makes non-colorability of $\cG$ robust; the precise value of $c$ will be selected in Case 2b below.  Note that this case includes the subcase that $\cG_{\rm lg}$ has no edges, i.e., there are no large diagonal entries of $\Pi$.

Let this vertex cover be denoted $\Sigma_c\subseteq \newvertix(\cG_{\rm lg})$.
Let $\Sigma\subseteq \cE(\newvertix(\cG))$ be the at most $c$
nodes of $\cG$ that collectively own $\Sigma_c$,
and let $\Sigma_{\rm lg}$ be the at most $3c$ nodes of $\cG_{\rm lg}$ owned by $\Sigma$.
By construction $\Sigma_c\subseteq \Sigma_{\rm lg}$.  

Let $\bar \Sigma=\newvertix(\cG)\setminus \Sigma$ and let $\bar \Sigma_{\rm lg}$ be the $3|\bar \Sigma|$ nodes
of $\cG_{\rm lg}$ (equivalently, rows of $A$) owned by $\bar \Sigma$.
Since $\cG$ is robustly not 3-colorable with parameter $c$,
the induced subgraph $\cG[\bar \Sigma]$ is not 3-colorable.  
Recall that $S$ stands for the Schur complement of the $(1,1)$ block of $B+H+\Diag(\dv)$.  We will argue that $S(\bar \Sigma_{\rm lg},\bar\Sigma_{\rm lg})$ may be written as $A+L+H'$, where $A,L,H'$ satisfy conditions 2(a)--(d) of Theorem~\ref{thm:tilde_p3_is_nphard}.  Since $G[\bar \Sigma]$ is not 3-colorable, it follows from the theorem that $\rank(S(\bar \Sigma_{\rm lg}, \bar\Sigma_{\rm lg}))\ge 4$, which in turn implies $\rank(S)\ge 4$, thus concluding Case 2a.   The matrix $A$ in this formula for $S(\bar \Sigma_{\rm lg},\bar\Sigma_{\rm lg})$ is the same as the matrix $A$ defined earlier in the appendix, all of whose entries are 0's, 1's, or 2's.

Recall from the notation of the theorem that $L$ is may have arbitrary entries in positions corresponding to nonedges of $\cG[\bar\Sigma]$ and in diagonal positions.  Therefore, these entries of $S(\bar \Sigma_{\rm lg},\bar \Sigma_{\rm lg})$ do not need any further analysis in
our decomposition of $S(\bar \Sigma_{\rm lg},\bar \Sigma_{\rm lg})$ as the sum $A+L+H'$. The entries of $S$ that need consideration are therefore the entries corresponding to edges (which are 0's in $A$) and the six off-diagonal entries of the $3\times 3$ diagonal blocks (which are 1's in $A$).  

Therefore, we now proceed to bound the entries of the update to $A$ 
$$H':=H_3 - (K+H_2)\Pi(K+H_2)^{\top}$$
that correspond to edges of $\cG[\bar \Sigma]$ and to off-diagonal entries of diagonal blocks of $S(\bar \Sigma_{\rm lg},\bar\Sigma_{\rm lg})$.   Let us call these the ``distinguished" entries of $H'$.
As above, let us write the second term of $H'$ as a sum of $(9\bar m)^2$ rank-one matrices:
$$(K+H_2)\Pi(K+H_2)^{\top}=\sum_{u=1}^{9\bar m}\sum_{u'=1}^{9\bar m} (K+H_2)(:,u)\Pi(u,u')(K+H_2)(:,u')^{\top}$$
and classify the terms into three categories: (a) terms such that $u=u'$ and $\Pi(u,u)$ is large, (b) terms such that $u=u'$ and $\Pi(u,u')$ is small, and (c) terms such that $u\ne u'$.

For $u$ in category (a), a large diagonal entry of $\Pi$, we know $\Pi(u,u)\le \pi_1$ by the hypothesis of Case 2.  Furthermore, we know that entries of $K(:,u)K(:,u)^{\top}$ in distinguished positions are
$(s\delta)^2$ (that is, small entries of $K$ squared)  because the two large entries of column $K(:,u)$, say in positions $k,l$, are such that both $k,l$ are excluded from $\bar\Sigma_{\rm lg}$ by construction of $\bar\Sigma_{\rm lg}$.    The perturbation $H_2$ raises this to at most $2.2(s\delta)^2$ (since $\Vert H_2\Vert_F\le \hat p\eps$, and we can make $s$ sufficiently large). 
Therefore, if $(k,l)$ is a distinguished entry and $u$ is in category (a), we have shown that
\begin{align*}
((K+H_2)(:,u)\Pi(u,u)(K+H_2)(:,u)^{\top}))(k,l) & \le
2.2(s\delta)^2\pi_1 \\
&=2.2\hat p\eps.
\end{align*}

Next, if $u$ is in category (b), a small diagonal entry of $\Pi$, we know that $\Pi(u,u)\le \pi_0$. Furthermore, we know that entries of $K(:,u)K(:,u)^{\top}$ in distinguished positions are either $(s\delta)^2$ or $s^2\delta$, i.e., a product of two small entries of $K$ or a small and large entry.  This is because the products of two large entries appear only in positions $(k,l)$ and $(l,k)$, where $k$ and $l$ are the indices of the row associated with a nonedge by construction of $K$.  However, entries corresponding to nonedges are not distinguished in this analysis.  Therefore, an upper bound on this entry is $s^2\delta$, and therefore an upper bound on the corresponding entry of $(K+H_2)(:,u)(K+H_2)(:,u)^{\top}$ is $1.1s^2\delta$.  Therefore, if $(k,l)$ is a distinguished entry and $u$ is in category (b), we have show that
\begin{align*}
((K+H_2)(:,u)\Pi(u,u)(K+H_2)(:,u)^{\top})(k,l) &\le 1.1s^2\delta\pi_0 \\
&=\frac{0.1\eps_0}{27\bar m n}.
\end{align*}

Finally, if $(u,u')$ is in category (c), i.e., an off-diagonal entry of $\Pi$, then entries of $K({:,u})K({:,u'})^{\top}$ can be as large as $s^2$, and $\Pi(u,u')\le \pi_1\delta^2/9$ by the analysis at the beginning of Case 2, so for any entry $(k,l)$ of the product (distinguished or not),
\begin{align*}
((K+H_2)(:,u)\Pi(u,u')(K+H_2)(:,u')^{\top})(k,l) &\le 1.1s^2\cdot \pi_1\delta^2/9 \\
&\le 0.3\hat p \epsilon.
\end{align*}

In all three cases we have an upper bound on the contribution of one rank-one term to a distinguished entry of $H'$.  The contributions from category (b) add up to at most $0.1\eps_0/(3n)$ since there are at most $9\bar{m}$ category (b) terms.  The sum of terms from categories (a) and (c)  is at most $2.2\cdot(9\bar m)^2\hat p\eps$ since there are at most $9\bar m$ category-(a) terms and at most $(9\bar m)^2-9\bar m$ category-(c) terms.  We can place an upper bound of $0.9\eps_0/(3n)$ on the sum of terms of categories (a) and (c) if we impose the assumption:
\begin{equation}
\eps \le \frac{\eps_0}{600\bar m^2 n \hat p}.
\label{eq:epsbdappdx}
\end{equation}
With this assumption in place, we can now claim that the sum of 
all contributions to a distinguished entry of $H'$ is at most $\eps_0/(3n)$ in magnitude.  The non-distinguished entries of $H'$ may be set to 0 since these entries correspond to edges (which are covered by $L$ or $\Diag(\dv^{\cV})$).   Since the size of $H'$ is $3n\times 3n$, it follows that $\Vert H'\Vert_F\le \eps_0$, and thus all hypotheses 2(a)--(d) of Theorem~\ref{thm:tilde_p3_is_nphard} are satisfied except the rank assumption.  Since the graph is not 3-colorable, this implies that 
$\rank(S(\bar\Sigma_{\rm lg},\bar\Sigma_{\rm lg}))\ge 4$.
This concludes the analysis of Case 2a.

\underline{Case 2b, more than $c$ nodes  of $\cG_{\rm lg}$ are required to cover the edges of $\cG_{\rm lg}$.}

We will now specify $c=5$.

Let $u$ be a large diagonal entry of $\Pi$ corresponding to entry $(k,l)$ of $A$.  This entry also corresponds to an edge of $\cG_{\rm lg}$ and to a nonedge $\{i,j\}$ of the original graph $\cG$.  We have the following lower bound:
\begin{align*}((K+H_2)\Pi(K+H_2)^{\top})(k,l)&\ge 0.9s^2\pi_0\\
&= \frac{\eps_0}{30\bar m n \delta}\\
&\ge  \frac{\eps_0 \cdot 27 \sqrt{6} \bar m} { n^{1/2} \epsilon} \\
&\ge 16200 \sqrt{6}\bar m^3 n^{1/2}\hat p\\
& > 3.96\cdot 10^4 \cdot \bar m^3 n^{1/2}\hat p.
\end{align*}
The first line is obtained because $K(k,u)=K(l,u)=s^2$ (i.e., large entries of $K$), $H_2$ does not perturb this much assuming $s$ is chosen sufficiently large, and $\Pi(u,u)\ge \pi_0$ by assumption that $u$ is a large diagonal entry.  Contributions from the other entries of $\Pi$ can only increase this since $K+H_2$ and $\Pi$ are both positive matrices.
The second line follows from \eqref{eq:pi0def}, the third from
\eqref{eq:deltadef}, and the fourth from 
\eqref{eq:epsbdappdx}.  Thus, 
\begin{align*}
    S(k,l) &= A(k,l)+H_3(k,l)-((K+H_2)\Pi(K+H_2))(k,l) \\
    & < 2 + \eps - 3.96\cdot 10^4\\
    & < -3.95 \cdot 10^4.
\end{align*}

Let us rename $S$ as $S^{(0)}$ and consider performing the following operations for $\mu=0,1,2,\ldots$
\begin{enumerate}
    \item 
    Let $\tilde S^{(\mu)}\in\SS^{3n-\mu}$ denote $PS^{(\mu)}P^{\top}$, where $P$ is a permutation matrix chosen so that the largest diagonal entry of $\tilde S^{(\mu)}$ is in the $(1,1)$ position.  
    \item
    Terminate if $\tilde S^{(\mu)}(1,1)=0$.
    \item
    Else let $S^{(\mu+1)}\in \SS^{3n-\mu-1}$ be the Schur complement of the $(1,1)$ entry of $\tilde S^{(\mu)}$.
\end{enumerate}
Clearly the sequence of matrices produced by this iteration are all symmetric and positive semidefinite by the Schur complement lemmas.
Recall the following fact: the entry in a positive semidefinite matrix with the largest magnitude must occur on the diagonal.  Therefore, $\tilde S^{(\mu)}(1,1)$ is the entry with largest magnitude of $\tilde S^{(\mu)}$.  This in turn means that step 2 will not terminate provided that $S^{(\mu)}$ has any nonzero entry.

We claim that we can perform these operations at least 4 times before
termination in step 2.  Assuming we prove this claim, this implies that $\rank(S)\ge 4$ by the Schur complement lemmas, and this would therefore conclude the analysis of Case 2b.

To prove that we can perform the above iteration at least 4 times, we establish the following claims by induction.

{\bf Claims:}
\begin{enumerate}
    \item 
    The largest positive off-diagonal entry in $S^{(\mu)}$ is at most $(2+\eps)^{\mu+1}$.
    \item
    Let $r_\mu:=(2+\eps)^{\mu+1} - 10^4$.   Call the entries of $S^{(\mu)}$ whose value is less than $r_\mu$ the ``big negative entries.''  The big negative entries cannot be covered by fewer than $c+1-\mu$ rows/columns of $S^{(\mu)}$.
\end{enumerate}
Once we prove these claims by induction, then the main result follows because $r_\mu<0$ for $\mu=0,1,2,3,4$ and $c+1-\mu>0$ for $\mu=0,1,2,3,4$, so the second claim implies that there is at least one nonzero entry in $S^{(\mu)}$, and hence step 2 of the above algorithm will not terminate.

The base of the induction is as follows.  Recall that
$$S^{(0)}=S=A+H_3-(K+H_2)\Pi(K+H)^{\top}+\Diag(\dv^\newvertix).$$
The maximum entry of $A$ is 2, the maximum entry of $H_3$ is $\eps$, the third term is negative, and the fourth term does not affect off-diagonal entries.  Thus, the first induction claim holds for $\mu=0$.  We argued earlier that $S^{(0)}$ contains an entry smaller than $-10^4$ in position $(k,l)$ for every $(k,l)$ corresponding to a large diagonal entry of $\Pi$, i.e., edges of $\cG_{\rm lg}$.  The hypothesis for Case 2b is that these entries cannot be covered by fewer than $c+1$ rows/columns of $S^{(0)}$, i.e., nodes of $\cG_{\rm lg}$.  Thus, the second induction hypothesis also holds for $\mu=0$.

Now assume the hypothesis holds for $\mu$; we show that it holds for $\mu+1$.  Observe that $S^{(\mu+1)}=\tilde S^{(\mu)}(2:3n-\mu,2:3n-\mu)-\wv\wv^{\top}/\tilde S^{(\mu)}(1,1)$, where $\wv=\tilde S^{(\mu)}(2:3n-\mu,1)$.  Consider an off-diagonal entry of $S^{(\mu+1)}$, say entry $(i,j)$ with $i\ne j$.
We have: $S^{(\mu+1)}(i,j)=\tilde S^{(\mu)}(i+1,j+1)-w_iw_j/S^{(\mu)}(1,1)$.
There are two cases: either $w_i w_j\ge 0$ or $w_iw_j<0$.  In the first case, $S^{(\mu+1)}(i,j)\le S^{(\mu)}(i+1,j+1)$.  Since the bounds appearing in the induction claims are both upper bounds on the entries of $S^{(\mu)}$ and these upper bounds increase with $\mu$, then a decrease in an entry can only further sharpen the induction claim.

On the other hand, if $w_iw_j<0$, say, without loss of generality, that $w_i<0$ and $w_j>0$, then $-\tilde S^{(\mu)}(1,1)\le w_i<0$ since, as noted earlier, the largest magnitude entry of $\tilde S^{(\mu)}$ appears in the $(1,1)$ position.  Therefore, $|w_iw_j/\tilde S^{(\mu)}(1,1)|\le |w_j|\le (2+\eps)^\mu$, the second inequality arising from induction claim 1 since $w_j>0$.  Thus, $S^{(\mu+1)}(i,j)\le \tilde S^{(\mu)}(i+1,j+1)+(2+\eps)^\mu$.  Combined with the induction hypothesis, this means that the largest positive value in $S^{(\mu+1)}$ in an off-diagonal position is at most $(2+\eps)^{\mu+1}$, thus establishing the first induction claim.  Similarly, since $r_{\mu+1}=r_\mu+(2+\eps)^\mu$, the big negative entries of $\tilde S^{(\mu)}$ remain big and negative in $S^{(\mu+1)}$.

To finish the induction, we also need to show that the covering number of the big negative entries of $S^{(\mu+1)}$
is at most $c+1-(\mu+1)$.  This follows because the preceding paragraphs show that all big negative entries of $\tilde S^{(\mu)}$ persist in $S^{(\mu+1)}$ except for those lost in the discarded first row and column of $\tilde S^{(\mu)}$.  Therefore, since the big negative entries of $\tilde S^{(\mu)}$ needed at least $c+1-\mu$ rows/columns to be covered, then those of $S^{(\mu+1)}$ need at least $c+1-\mu-1$.  This concludes the induction and hence the analysis of Case 2b.

\addcontentsline{toc}{chapter}{References}
\bibliographystyle{plain}
\bibliography{lowrankdecomp}

\end{document}